\newtheorem{thm}{Theorem}[section]
\newtheorem{lem}[thm]{Lemma}
\newtheorem{cor}[thm]{Corollary}
\newtheorem{pro}[thm]{Proposition}
\theoremstyle{definition}
\newtheorem{defi}[thm]{Definition}
\newtheorem{ex}[thm]{Example}
\newtheorem{rmk}[thm]{Remark}
\title[Leibniz-dendriform bialgebras and relative Rota-Baxter operators] {Leibniz-dendriform bialgebras and relative Rota-Baxter operators}
\author{Qinxiu Sun}
\address{Department of Mathematics, Zhejiang University of Science and Technology, Hangzhou, 310023} \email{qxsun@126.com}
\author{Shuangjian Guo$^*$}
\address{ School of Mathematics and Statistics, Guizhou University of Finance and Economics, Guiyang, 550025}
         \email{shuangjianguo@126.com}
\subjclass[2020]{17A30, 17A36, 17B38, 17B40, 17B60, 17B62}
\keywords{Leibniz algebra, Leibniz-dendriform algebra, quasi-triangular Leibniz-dendriform 
 bialgebra, factorizable Leibniz-dendriform
 bialgebra, relative Rota-Baxter operator}
\begin{document}
\begin{abstract}
In this paper, we introduce the notion of Leibniz-dendriform bialgebras and establish their equivalence with phase 
 spaces and matched pairs of Leibniz algebras. The study of the coboundary case leads naturally to
  the Leibniz-dendriform Yang-Baxter equation (LD-YBE). 
We prove that skew-symmetric solutions of the LD-YBE give rise to coboundary Leibniz-dendriform bialgebras.
 Furthermore, we demonstrate that solutions not necessarily skew-symmetric can also induce such bialgebras.
This observation motivates the introduction of quasi-triangular and factorizable 
Leibniz-dendriform bialgebras. In particular, we show that solutions of the LD-YBE 
with invariant symmetric parts yield quasi-triangular Leibniz-dendriform bialgebras. 
Such solutions are also interpreted as relative Rota-Baxter operators with weights.
Finally, we establish a one-to-one correspondence between quadratic Rota-Baxter Leibniz-dendriform algebras
 and factorizable Leibniz-dendriform bialgebras.
 
\end{abstract}

\maketitle

\vspace{-1.2cm}

\tableofcontents

\vspace{-1.2cm}

\allowdisplaybreaks

\section{Introduction}
Leibniz algebras  were first introduced and investigated by Blokh \cite{B65, B71} under the name of $D$-algebras, 
emphasizing their close relationship with derivations. Later, during his study of Lie algebra homology, 
Loday observed that the antisymmetry of the product was not necessary to establish the derivation property on chains. 
This led him to rediscover and systematically define the notion of right (or, equivalently, left) Leibniz algebras \cite{L93}. 
Subsequently, Loday and Pirashvili laid the groundwork for their representation theory by introducing 
corresponding module structures and developing an associated cohomology theory \cite{LP93}.
Since then, Leibniz algebras have found diverse applications in mathematics and physics. 
From an operadic perspective, the operad governing Leibniz algebras is the duplicator of the Lie operad \cite{52}. 
Notably, they serve as the underlying algebraic structures of embedding tensors and play a significant role in higher gauge theories \cite{51,50}. 
Just as a Rota-Baxter operator on a Lie algebra induces a pre-Lie algebra structure, so does
 a (relative) Rota-Baxter operator on a Leibniz algebra yield a pre-Leibniz algebra \cite{Da}, 
 which has already introduced in \cite{28} under the name of Leibniz-dendriform algebra.
 In a recent study, the authors \cite{34} explored symplectic structures and
  phase spaces on Leibniz algebras via the framework of Leibniz-dendriform algebras. 
  Their work establishes a one-to-one correspondence between 
  symplectic Leibniz algebras and quadratic Leibniz-dendriform algebras.
 
A bialgebraic structure integrates an algebra with a coalgebra via specific compatibility conditions
 that govern their multiplication and comultiplication.
 In the early 1980s, Drinfeld established the theoretical groundwork for Lie bialgebras in \cite{8},
  highlighting two pivotal aspects: 
 their close connection to the classical Yang-Baxter equation and their central role in the infinitesimalization 
 of quantum groups. Subsequent developments expanded directly on this groundwork: 
 V. Zhelyabin introduced the notion of associative $D$-bialgebras in \cite{32,33},
  while Aguiar developed the antisymmetric infinitesimal bialgebra in \cite{1}.
   This antisymmetric structure bears dual significance: 
   it is equivalent to the double construction of Frobenius algebras \cite{4} and 
   can be interpreted as a Manin triple of associative algebras endowed with 
   a non-degenerate symmetric invariant bilinear form. Notably, this approach based on 
   Manin triples for studying bialgebraic structures has been successfully extended to various other algebraic structures, 
 such as pre-Lie algebras \cite{3}, Poisson algebras \cite{26}, Leibniz algebras \cite{28}, 
 perm algebras \cite{19}, Novikov algebras \cite{16}, pre-Novikov algebras \cite{22} and anti-dendriform algebras \cite{030}.

Within the framework of Lie bialgebras, coboundary Lie bialgebras with quasi-triangular Lie bialgebras as 
their prominent subclass play a foundational role in mathematical physics. Meanwhile, factorizable 
Lie bialgebras \cite{022} serve as a crucial bridge in this context:
they connect classical $r$-matrices to certain types of factorization problems, and their usefulness extends to
 diverse applications in integrable systems \cite{102,103}. In recent studies,
  these key results concerning factorizable and quasi-triangular structures have moved 
  beyond the setting of Lie bialgebras, achieving successful generalizations to other algebraic frameworks.
  Specifically, they have been adapted and generalized to antisymmetric infinitesimal bialgebras \cite{029}, 
  pre-Lie bialgebras \cite{033}, and Leibniz bialgebras \cite{07}.

It is natural to investigate bialgebra theory for Leibniz-dendriform algebras,
 which serves as the primary motivation for this work. Specifically, we introduce the notion of a Leibniz-dendriform bialgebra.
The study of coboundary Leibniz-dendriform bialgebras leads to the Leibniz-dendriform Yang-Baxter equation (LD-YBE),
  where any skew-symmetric solution gives rise to a Leibniz-dendriform bialgebra. More importantly,
   we examine how solutions without skew-symmetry can induce such bialgebras. In particular,
    we prove that solutions of the LD-YBE whose symmetric parts are invariant yield quasi-triangular Leibniz-dendriform bialgebras.
    Furthermore, we consider factorizable Leibniz-dendriform bialgebras as a special subclass of
     quasi-triangular Leibniz-dendriform bialgebras.
     We show that the double space of any Leibniz-dendriform bialgebra naturally carries a factorizable structure.
  Finally, we characterize solutions of the LD-YBE with invariant symmetric parts in terms of
  relative Rota-Baxter operators on Leibniz-dendriform algebras.

 The paper is organized as follows. In Section 2, we review fundamental concepts and results related
 to Leibniz algebras and Leibniz-dendriform algebras. In particular, we study the representations and
 matched pairs of Leibniz-dendriform algebras.
  Section 3 builds upon these foundations to develop a bialgebra theory for Leibniz-dendriform algebras.
  By examining the coboundary case, we introduce the Leibniz-dendriform Yang-Baxter equation (LD-YBE),
  whose skew-symmetric solutions yield Leibniz-dendriform bialgebras. We further introduce $\mathcal{O}$-operators
   on Leibniz-dendriform algebras and Leibniz-quadri-algebras, utilizing both structures to generate 
   skew-symmetric solutions of the LD-YBE. In Section 4, we explore quasi-triangular and factorizable Leibniz-dendriform bialgebras.
  We prove that the double of any Leibniz-dendriform bialgebra naturally carries a factorizable structure.
  In Section 5, we introduce quadratic Rota-Baxter Leibniz-dendriform algebras and
  establish their correspondence with factorizable Leibniz-dendriform bialgebras.

{\bf Notations.} Throughout the paper, $k$ is a field.  All vector spaces and algebras are over $k$.
 All algebras are finite-dimensional, although many results still hold in the infinite-dimensional case.
 Let $V$ be a vector space with a binary operation $\ast$. Define linear maps
$L_{\ast}, R_{\ast}:V\rightarrow \hbox{End}(V)$ by
 $L_{\ast}(a)b:=a\ast b, \  \ R_{\ast}(a)b:=b\ast a$
 and $\tau:V\otimes V\longrightarrow V\otimes V ,~\tau(a\otimes b)=b\otimes a$ for all$~a, b\in V$.
Assume that
 $r=\sum\limits_{i}a_i\otimes b_i \in V\otimes V$. Put
 \begin{small}
\begin{align*}
r_{12}\ast r_{13}:=\sum_{i,j}a_i\ast a_j\otimes b_i\otimes b_j,\;r_{23}\ast r_{12}:=\sum_{i,j}a_j\otimes a_i\ast b_j\otimes b_i,\;
r_{31}\ast r_{23}:=\sum_{i,j}b_i\otimes a_j\otimes a_i\ast b_j,\\
r_{21}\ast r_{13}:=\sum_{i,j}b_i\ast a_j\otimes a_i\otimes b_j,\;
r_{13}\ast r_{12}:=\sum_{i,j}a_i\ast a_j\otimes b_j\otimes b_i,\;
r_{13}\ast r_{23}:=\sum_{i,j}a_i\otimes a_j\otimes b_i\ast b_j,\\
r_{13}\ast r_{32}:=\sum_{i,j}a_i\otimes b_j\otimes b_i\ast a_j,\;
r_{23}\ast r_{21}:=\sum_{i,j}b_j\otimes a_i\ast a_j\otimes b_i,\;
r_{21}\ast r_{31}:=\sum_{i,j}b_i\ast b_j\otimes a_i\otimes a_j,\\
r_{23}\ast r_{13}:=\sum_{i,j}a_i \otimes a_j \otimes b_i\ast b_j,\;r_{12}\ast r_{23}:=\sum_{i,j}a_i\otimes b_i\ast a_j\otimes b_j,\;
r_{12}\ast r_{31}:=\sum_{i,j}a_i\ast b_j\otimes b_i\otimes a_j.
\end{align*}\end{small}

\section{Representations and matched pairs of Leibniz-dendriform algebras}

In this section, we first recall some basic results on Leibniz algebras and
Leibniz-dendriform algebras. We then study the representations and matched pairs of
 Leibniz-dendriform algebras. Furthermore, we show that a matched pair of Leibniz-dendriform
  algebras is equivalent to the phase space of a Leibniz algebra,
  and also corresponds to a Manin triple of Leibniz-dendriform algebras.

 A Leibniz algebra is a vector space $A$ equipped with a binary operation $\circ$ satisfying
\begin{equation*}x\circ(y\circ z)=(x\circ y)\circ z+y\circ(x\circ z)\end{equation*} for all $x,y,z\in A$.

A {\bf representation (bimodule)} of a Leibniz algebra $(A,\circ)$ is a triple $(V,l_{\circ},r_{\circ})$,
where $V$ is a vector space and $l_{\circ},r_{\circ}: A\rightarrow \text{End}(V)$ are linear maps satisfying
\begin{align}
\label{Lr1} &l_{\circ}(x\circ y)=l_{\circ}(x)l_{\circ}(y)v-l_{\circ}(y)l_{\circ}(x),\\
\label{Lr2}&l_{\circ}(x)r_{\circ}(y)-r_{\circ}(y)l_{\circ}(x)=r_{\circ}(x\circ y),\\
\label{Lr3} &r_{\circ}(y)r_{\circ}(x)=-r_{\circ}(y)l_{\circ}(x)
\end{align} for all $ x,y\in A.$

Let $(A,\circ)$ be a Leibniz algebra. If $(V,l_{\circ},r_{\circ})$ is a representation of $(A,\circ)$, then
$(V^{*},l_{\circ}^{*},-l_{\circ}^{*}-r_{\circ}^{*})$ is also a representation of $(A,\circ)$.

A symplectic Leibniz algebra is a Leibniz algebra $(A,\circ)$ together with a non-degenerate symmetric bilinear form
$\omega$ satisfying the following condition for all $x,y,z\in A$
\begin{equation}
\label{Bs}\omega(z,x\circ y)=-\omega(y,x\circ z)+\omega(x,y\circ z+z\circ y).
\end{equation}

\begin{defi}\cite{34}
Let $(A,\circ_A)$ be a Leibniz algebra and $A^{*}$ its dual space. If there is a
Leibniz algebra structure $\circ$ on the direct sum vector space $A\oplus A^{*}$ such that
$(A\oplus A^{*},\circ,\omega)$ is a symplectic Leibniz algebra, where $\omega$ is given by the following equation:
 \begin{equation}\label{C2}\omega(x+a, y+b) =\langle x,b\rangle+\langle a,y\rangle, \forall~x, y\in A_1,~a, b\in A_{1}^{*},\end{equation}
and both $(A,\circ_A)$ and $(A^{*},\circ|A^{*})$ are Leibniz subalgebras of $(A\oplus A^{*},\circ)$, then the
the symplectic Leibniz
algebra $(A\oplus A^{*},\circ,\omega)$ is called a phase space of the Leibniz algebra $(A,\circ_A)$.
\end{defi}

\begin{defi} \cite{28}
 A {\bf Leibniz-dendriform algebra} is a vector space $A$ together with two binary operations
$\succ,\prec: A\otimes A \rightarrow A$ satisfying
\begin{align} \label{Ld1}
&(x\circ y)\succ z=x\succ(y\succ z)-y\succ(x\succ z),\\
\label{Ld2}&y\prec(x\circ z)+(x\succ y)\prec z=x\succ (y\prec z),\\
\label{Ld3}&x\prec(y\circ z)=(x\prec y)\prec z+y\succ(x\prec z),
\end{align}
for all $x,y,z\in A$, where $x\circ y=x\succ y+x\prec y$.
  $(A,\circ)$ is a Leibniz algebra, which is called the associated Leibniz algebra of $(A,\succ,\prec)$
  and $(A,\succ,\prec)$ is called a compatible Leibniz-dendriform algebra on $(A,\circ)$
\end{defi}

Define $x\circ y=x\succ y+x\prec y$ and $x\odot y=x\succ y+y\prec x$.
By Eqs.~(\ref{Ld1})-(\ref{Ld3}), we have
\begin{align} &\label{Ld4}
(x\succ y+y\prec x)\prec z=0,\ \ \  (x\circ y+y\circ x)\succ z=0, \\&
\label{Ld5}
y\odot (x\odot z)-(y\circ x)\odot z=x\odot(y\succ z)=y\succ (x\odot z)-(y\circ x)\odot z.
\end{align}

\begin{ex}
Let $(A,\succ,\prec)$ be a 1-dimensional Leibniz-dendriform algebra with a basis $\{ e\}$.
Suppose that $e\succ e=pe,~e\prec e=qe$ with $p,q\in k$. Then
$(p+q)q=0,~(p+q)p=0$. Thus, $p=-q$.
\end{ex}

\begin{ex} \label{E1}
Let $A$ be a 2-dimensional vector space with a basis $\{e_1, e_2\}$. Define two bilinear
maps $\succ,\prec:A \otimes A \longrightarrow A$ respectively by
	\begin{align*}
		&e_1\succ e_1=e_1,\ \ \ e_1\prec e_1=-e_1, \ \ \ e_1\succ e_2=e_2, \\&
e_2\succ e_1=e_2,\ \ \ e_1\prec e_2=-e_2, \ \ \ e_2\prec e_1=-e_2,\ \ \ e_2\succ e_2=e_2\prec e_2=0.
	\end{align*}
 By direct calculation, $(A,\succ,\prec)$ is a Leibniz-dendriform algebra.
\end{ex}

\begin{ex} \label{E2}
Let $A$ be a 2-dimensional vector space over the real field $\mathbb{ R}$ with a basis $\{e_1, e_2\}$. Define two bilinear
maps $\succ,\prec:A \otimes A \longrightarrow A$ respectively by
	\begin{align*}
		&e_1\succ e_1=e_1,\ \ \ e_1\prec e_1=-e_1, \ \ \ e_1\succ e_2=e_2, \\&
e_2\succ e_1=0,\ \ \ e_1\prec e_2=0, \ \ \ e_2\prec e_1=-e_2,\ \ \ e_2\succ e_2=e_2\prec e_2=0.
	\end{align*}
 By a direct computation, $(A,\succ,\prec)$ is a Leibniz-dendriform algebra.
\end{ex}

\begin{defi}\cite{34}
A quadratic Leibniz-dendriform algebra is a
Leibniz-dendriform algebra $(A,\succ,\prec)$ equipped with a non-degenerate symmetric bilinear form $\omega$ such that the following
invariant conditions hold for all $x, y, z \in A$,
\begin{equation} \label{C1}\omega (x \prec y, z)=\omega(x, y\circ z+z\circ y), \ \  \
\omega(x \succ y, z)=-\omega( y,x\circ z).\end{equation}
\end{defi}

\begin{thm} \cite{34}\label{Sq} Let $(A,\circ,\omega)$ be a symplectic Leibniz algebra.
 Then there exists a compatible Leibniz-dendriform algebra structure $(A,\succ,\prec)$
 on $(A,\circ)$ defined by Eq.~(\ref{C1}),
such that $(A,\circ)$ is the associated Leibniz algebra of $(A,\succ,\prec)$. This Leibniz-dendriform algebra is called
the compatible Leibniz-dendriform algebra of $(A,\circ,\omega)$. Moreover, $(A,\succ,\prec,\omega)$ is a quadratic Leibniz-dendriform algebra.
Conversely, assume that $(A,\succ,\prec,\omega)$ is a quadratic Leibniz-dendriform algebra.
Then $(A,\circ,\omega)$ is a symplectic Leibniz algebra.
\end{thm}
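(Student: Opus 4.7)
The plan is to reduce every dendriform axiom, via the non-degeneracy of $\omega$, to an identity in the Leibniz algebra $(A,\circ)$. Given a symplectic Leibniz algebra $(A,\circ,\omega)$, for each $x,y\in A$ the linear functionals $z\mapsto-\omega(y,x\circ z)$ and $z\mapsto\omega(x,y\circ z+z\circ y)$ each determine, by non-degeneracy, a unique element $x\succ y$ and $x\prec y$ satisfying~\eqref{C1}. The compatibility $x\succ y+x\prec y=x\circ y$ is then exactly the symplectic condition~\eqref{Bs}: pairing against $z$ and using symmetry of $\omega$, the right-hand side of~\eqref{Bs} equals $\omega(x\succ y+x\prec y,z)$ by construction, so non-degeneracy forces equality in $A$.

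Next I would verify the three dendriform axioms~\eqref{Ld1}--\eqref{Ld3} by pairing each side with an auxiliary $w\in A$ and using~\eqref{C1} to rewrite every $\succ,\prec$ expression purely in terms of $\circ$ and $\omega$. Axiom~\eqref{Ld1} collapses to the Leibniz identity $x\circ(y\circ w)=(x\circ y)\circ w+y\circ(x\circ w)$ paired against $z$. Axiom~\eqref{Ld2} requires two applications of the Leibniz identity (to $x\circ(z\circ w)$ and to $x\circ(w\circ z)$), after which every term cancels.

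The main obstacle is axiom~\eqref{Ld3}. After translation and two applications of the Leibniz identity, one is left with an extra surviving term $\omega(x,(z\circ w+w\circ z)\circ y)$ which does not vanish by formal manipulation. To dispose of it I would invoke the identity $(a\circ a)\circ y=0$, which holds in every left Leibniz algebra: setting $x=y=a$ in the defining axiom gives $a\circ(a\circ y)=(a\circ a)\circ y+a\circ(a\circ y)$, forcing $(a\circ a)\circ y=0$. Polarizing in $a\mapsto z+w$ yields $(z\circ w+w\circ z)\circ y=0$, so the extra term vanishes and~\eqref{Ld3} holds. This polarization step is the one non-formal ingredient in the whole verification.

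For the converse, let $(A,\succ,\prec,\omega)$ be a quadratic Leibniz-dendriform algebra. The associated $\circ$ is a Leibniz product by the definition of a Leibniz-dendriform algebra, so only~\eqref{Bs} needs checking. Reading the compatibility calculation backwards, the invariance relations~\eqref{C1} applied to $x\circ y=x\succ y+x\prec y$ yield $\omega(x\circ y,z)=-\omega(y,x\circ z)+\omega(x,y\circ z+z\circ y)$; symmetry of $\omega$ rewrites the left-hand side as $\omega(z,x\circ y)$, which is precisely~\eqref{Bs}. Thus $(A,\circ,\omega)$ is symplectic and the equivalence is complete.
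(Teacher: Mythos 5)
Your proof is correct. The paper itself gives no argument for this theorem (it is quoted from \cite{34}), but your route --- defining $\succ,\prec$ by duality through the non-degenerate form and transporting each axiom \eqref{Ld1}--\eqref{Ld3} back to identities in $(A,\circ)$ --- is exactly the standard one, and you correctly isolate the only non-formal ingredient: the surviving term in \eqref{Ld3} requires $(z\circ w+w\circ z)\circ y=0$, which follows from $(a\circ a)\circ y=0$ by polarization in a left Leibniz algebra (the paper records the dendriform shadow of this same fact in Eq.~\eqref{Ld4}). The remaining claims (the quadratic invariance of $\omega$, which holds by construction via \eqref{C1}, and the converse, which is a one-line reversal using symmetry of $\omega$) are handled correctly.
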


\begin{defi}
Let $(A,\succ,\prec)$ be a Leibniz-dendriform algebra, $V$ a vector space and
$l_{\succ},r_{\succ},l_{\prec},r_{\prec}:A\rightarrow \text{End}(V)$ be linear maps.
 $(V,l_{\succ},r_{\succ},l_{\prec},r_{\prec})$ is called a \textbf{representation (bimodule)} of $(A,\succ,\prec)$
  if the following conditions hold:
\begin{flalign}
& \label{R1}l_{\succ}(x\circ y)=l_{\succ}(x)l_{\succ}(y)-l_{\succ}(y)l_{\succ}(x), \\
&\label{R2}r_{\succ}(y\succ z)-l_{\succ}(y)r_{\succ}(z)=r_{\succ}(z)r_{\circ}(y), \\
&\label{R3}l_{\succ}(x)r_{\succ}(z)-r_{\succ}(x\succ z)=r_{\succ}(z)l_{\circ}(x), \\
&\label{R4}l_{\succ}(x)l_{\prec}(y)=l_{\prec}(x\succ y)+l_{\prec}(y)l_{\circ}(x), \\
&\label{R5}r_{\succ}(y\prec z)=r_{\prec}(z)r_{\succ}(y)+l_{\prec}(y)r_{\circ}(z), \\
&\label{R6}l_{\succ}(x)r_{\prec}(z)=r_{\prec}(z)l_{\succ}(x)+r_{\prec}(x\circ z), \\
&\label{R7}l_{\prec}(x)l_{\circ}(y)=l_{\prec}(x\prec y)+l_{\succ}(y)l_{\prec}(x), \\
&\label{R8}r_{\prec}(y\circ z)=r_{\prec}(z)r_{\prec}(y)+l_{\succ}(y) r_{\prec}(z),\\
&\label{R9}l_{\prec}(x)r_{\circ}(z)=r_{\prec}(z)l_{\prec}(x)+r_{\succ}(x\prec z),
\end{flalign}
for all $x,y\in A$, where $l_{\circ}=l_{\succ}+l_{\prec},  \ r_{\circ}=r_{\succ}+r_{\prec}$ and
$x\circ y=x\succ y+x\prec y$.
\end{defi}
By Eqs.~(\ref{R3})-(\ref{R9}), we get
\begin{align}&\label{R10}
r_{\succ}(x\odot y)-l_{\odot}(x)r_{\succ}(y)-r_{\odot}(y)r_{\circ}(x)=0,\\&
\label{R11} l_{\prec}(x\odot y)=0,\ \ \ r_{\prec}(x)l_{\odot}(y)=0,\ \ \ r_{\prec}(x)l_{\star}(x)=0,
\end{align}
where $l_{\star}=l_{\circ}+r_{\circ}$ and $l_{\odot}=l_{\succ}+r_{\prec}$.

Let $A$ and $V$ be vector spaces. For
a linear map $f: A \longrightarrow \hbox{End} (V)$, define a linear
map $f^{*}: A \longrightarrow \hbox{End} (V^{*})$ by $\langle
f^{*}(x)u^{*},v\rangle=-\langle u^{*},f(x)v\rangle$ for all $x\in A,
u^{*}\in V^{*}, v\in V$, where $\langle \ , \ \rangle$ is the usual
pairing between $V$ and $V^{*}$.

\begin{pro} \label{Dr} Let $(A,\succ,\prec)$ be a Leibniz-dendriform algebra and $(V,l_{\succ}, r_{\succ},l_{\prec},r_{\prec})$ be its
representation. Then
\begin{enumerate}
	\item $(V,l_{\succ},r_{\prec})$ is a representation of the associated
Leibniz algebra $(A,\circ)$.
 \item $(V,l_{\circ},r_{\circ})$ is a representation of the
 associated Leibniz algebra $(A,\circ)$.
	\item $(V^*,l_{\circ}^*,r_{\odot}^*,
-l_{\prec}^*,-l_{\star}^*)$
is a representation of $(A,\succ,\prec)$. We call it the {\bf dual representation}.
	\item $(V^{*},l_{\circ}^*,-r_{\circ}^*-l_{\circ}^*)
$ is a representation of the
 associated Leibniz algebra $(A,\circ)$.
\item $(V^{*},l_{\succ}^{*},-l_{\odot}^{*})$ is a
	representation of the
 associated Leibniz algebra $(A,\circ)$,
\end{enumerate}
where $l_{\circ}=l_{\prec}+l_{\succ},~r_{\circ}=r_{\prec}+r_{\succ},
 l_{\star}=l_{\circ}+r_{\circ},~l_{\odot}=l_{\succ}+r_{\prec}$ and $r_{\odot}=r_{\succ}+l_{\prec}.$
\end{pro}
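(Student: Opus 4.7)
The plan is to verify (a) and (b) directly from (\ref{R1})--(\ref{R9}) together with the auxiliary identities (\ref{R10})--(\ref{R11}), then derive (d) and (e) as corollaries via the dualization principle for Leibniz-algebra representations recalled in the excerpt, and finally handle (c) by verifying each of the nine representation axioms for the dualized quadruple.

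For (a), the three Leibniz-representation axioms (\ref{Lr1})--(\ref{Lr3}) for $(V, l_\succ, r_\prec)$ reduce respectively to (\ref{R1}), (\ref{R6}), and the identity $r_\prec(y)r_\prec(x) = -r_\prec(y)l_\succ(x)$, which is obtained by expanding $r_\prec(y)l_\odot(x) = 0$ from (\ref{R11}) and swapping the variables. For (b), I would substitute $l_\circ = l_\succ + l_\prec$ and $r_\circ = r_\succ + r_\prec$ into each of (\ref{Lr1})--(\ref{Lr3}), expand, and match the resulting bilinear terms against sums of (\ref{R1}), (\ref{R4}), (\ref{R7}) for the first axiom, (\ref{R2}), (\ref{R3}), (\ref{R5}), (\ref{R6}) for the second, and (\ref{R8}), (\ref{R9}) together with (\ref{R10})--(\ref{R11}) for the third. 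All cancellations are routine bookkeeping.

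Parts (d) and (e) then follow as corollaries. From (b), applying the dualization rule $(V, l, r) \mapsto (V^*, l^*, -l^* - r^*)$ immediately gives (d). From (a), the same rule applied to $(V, l_\succ, r_\prec)$ yields $(V^*, l_\succ^*, -l_\succ^* - r_\prec^*)$; since $-l_\succ^* - r_\prec^* = -(l_\succ + r_\prec)^* = -l_\odot^*$, this is exactly (e).

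The main work is therefore (c). Using the sign convention $\langle f^* u^*, v\rangle = -\langle u^*, f v\rangle$, one has $f^* \circ g^* = -(gf)^*$, so each of the nine dualized axioms for $(\tilde l_\succ, \tilde r_\succ, \tilde l_\prec, \tilde r_\prec) = (l_\circ^*, r_\odot^*, -l_\prec^*, -l_\star^*)$, after pairing with an arbitrary $v \in V$ and cancelling a global sign, becomes a bilinear identity in $\mathrm{End}(V)$ on the original representation data. Each such identity should match an appropriate combination of (\ref{R1})--(\ref{R9}), occasionally supplemented by further consequences not explicitly recorded in (\ref{R10})--(\ref{R11}) (for example $r_\prec(y) r_\odot(z) = 0$, which follows from (\ref{R5}) and (\ref{R9})). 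The principal obstacle is organizational rather than conceptual: for each of the nine dualized axioms one must identify the correct combination of original axioms and consistently track signs introduced by dualization, so the proof reduces to a systematic case analysis with no conceptual difficulty.
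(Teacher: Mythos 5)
Your proposal is correct and amounts to the same direct-verification argument that the paper compresses into ``this can be verified through direct computations'': (a) does reduce to Eqs.~(\ref{R1}), (\ref{R6}) and the identity $r_{\prec}(y)l_{\odot}(x)=0$ from (\ref{R11}), (b) to routine expansion against (\ref{R1})--(\ref{R11}), and (c) to a nine-case dualization check. Your one genuine economy over the paper's (unwritten) proof is obtaining (d) and (e) as immediate corollaries of (b) and (a) via the dualization rule $(V,l,r)\mapsto(V^{*},l^{*},-l^{*}-r^{*})$ stated earlier in the paper; the only blemish is that your attribution of which axioms feed which Leibniz identity in (b) is slightly off (the third identity needs (\ref{R2})--(\ref{R3}) to give $r_{\succ}(z)l_{\star}(x)=0$, and the second needs (\ref{R8}) and (\ref{R11})), but this is bookkeeping rather than a gap.
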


\begin{proof} This can be verified through direct computations.
\end{proof}

\begin{ex} Let $(A,\succ,\prec)$ be a Leibniz-dendriform algebra. Then
\begin{enumerate}
\item $ (A,L_{\succ},R_{\succ},L_{\prec},R_{\prec})$ is a representation of $(A,\succ,\prec)$,
which is called the {\bf regular representation} of $(A,\succ,\prec)$.
Moreover, $(A^{*},L_{\prec}^{*}+L_{\succ}^{*},R_{\succ}^{*}+L_{\prec}^{*},
-L_{\prec}^*,-(L_{\prec}^{*}+L_{\succ}^{*}+R_{\prec}^{*}+R_{\succ}^{*}))$
is the dual representation of $(A,L_{\succ},R_{\succ},L_{\prec},R_{\prec})$.
\item $ (A,L_{\succ},R_{\prec})$ and $(A^{*},L_{\succ}^{*},-(L_{\succ}^{*}+R_{\prec}^{*}))$ are both representations of the associated
Leibniz algebra $(A,\circ)$.
\end{enumerate}
\end{ex}

Before turning to matched pairs of Leibniz-dendriform algebras,
we first revisit the matched pairs of Leibniz algebras that were introduced in \cite{28}.

\begin{pro} \cite{28} \label{d1}
Let $(A,\circ)$ and $(B,\bullet)$ be Leibniz algebras. If $(B,l_A,r_A)$ is a representation
of $(A,\circ)$, $(A,l_B,r_B)$ is a representation of $(B,\bullet)$ and the following conditions are satisfied:
\begin{flalign}\label{Lm1}
&r_A(x)(a\bullet b)-a\bullet r_A(x)(b)+b\bullet r_A(x)(a)-r_A(l_B(b)x)a+r_A(l_B(a)x)b=0,\\&
\label{Lm2}
l_A(a)(x\circ y)-(l_A(x)a)\bullet b-a\bullet (l_A(x)b)-l_A(r_B(a)x)b-r_A(r_B(b)x)a=0,
\\&
\label{Lm3}(l_A(x)a)\bullet b+l_A(r_B(a)x)b+(r_{A}(x)a)\bullet b+l_A(l_B(a)x)b=0,\\&
\label{Lm4}r_B(a)(x\circ y)-x\circ (r_B(a)y)+y\circ (r_B(a)x)-r_B(l_A(y)a)x+r_B(l_A(x)a)y=0
,\\&
\label{Lm5}l_B(a)(x\circ y)-(l_B(a)x)\circ y-x\circ (l_B(a)y)-l_B(r_A(x)a)y-r_B(r_A(y)a)x=0
,\\&
\label{Lm6}(l_B(a)x)\circ y+l_B(r_A(x)a)y+(r_B(a)x)\circ y+l_B(l_A(x)a)y=0,  
\end{flalign}
for all $x,y\in A,~a,b\in B.$ Then there is a Leibniz algebra structure on the direct sum $A\oplus B$ of
 the underlying vector spaces of $A$ and $B$ given by
\begin{align*}
(x+a)\cdot (y+b)=(x\circ y+l_B(a)y+r_B(b)x)+(a\bullet b+l_A(x)b+r_A(y)a),\;a, b\in A,\;x, y\in B.
\end{align*}
 $(A,B,l_A,r_A,l_B,r_B)$ satisfying the above conditions is called a \textbf{matched pair of Leibniz algebras.}
 Conversely, any Leibniz algebra that can be decomposed into a direct sum of
 two Leibniz subalgebras is obtained from a matched pair of Leibniz algebras.
\end{pro}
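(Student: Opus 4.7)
The plan is to prove both directions by expanding the Leibniz identity on $L = A \oplus B$ equipped with the product
\[ (x+a) \cdot (y+b) = \bigl(x \circ y + l_B(a) y + r_B(b) x\bigr) + \bigl(a \bullet b + l_A(x) b + r_A(y) a\bigr), \]
and sorting the resulting terms according to the pattern of which arguments lie in $A$ versus $B$.

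For the forward direction, I would classify triples $(u_1, u_2, u_3)$ in $L$ according to the eight possible $A/B$-patterns of their entries. The pure patterns $AAA$ and $BBB$ reduce to the Leibniz identities of $(A, \circ)$ and $(B, \bullet)$ respectively. For each of the six mixed patterns the expansion splits into an $A$-projection and a $B$-projection, producing twelve component identities in total. The three patterns $AAB$, $ABA$, $BAA$ with two $A$-entries have $B$-projections that encode the representation axioms \eqref{Lr1}--\eqref{Lr3} for $(B, l_A, r_A)$ on $(A, \circ)$, and $A$-projections that yield the compatibility conditions \eqref{Lm4}--\eqref{Lm6}. Symmetrically, the three patterns $ABB$, $BAB$, $BBA$ produce on their $A$-projections the representation axioms for $(A, l_B, r_B)$ on $(B, \bullet)$, and on their $B$-projections the conditions \eqref{Lm1}--\eqref{Lm3}. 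Some identities such as \eqref{Lm3} and \eqref{Lm6} will arise only as suitable linear combinations of two individual projections rather than directly from a single case.

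For the converse, assume $(L, \cdot)$ is a Leibniz algebra and $L = A \oplus B$ as a direct sum of Leibniz subalgebras. Define four linear maps by projecting the cross products along this decomposition: for $x \in A$ and $a \in B$, set
\[ x \cdot a = r_B(a) x + l_A(x) a, \qquad a \cdot x = l_B(a) x + r_A(x) a, \]
where the first summand in each expression lies in $A$ and the second in $B$. The product on $L$ then automatically takes the shape used in the forward direction, and the same twelve component identities, now arising as consequences of the Leibniz identity of $(L, \cdot)$, supply both the representation axioms and the compatibility conditions \eqref{Lm1}--\eqref{Lm6}.

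The main obstacle is the combinatorial bookkeeping of the six mixed patterns and their twelve projections: no deep algebraic idea is required, but one must carefully track which projection of which triple yields which axiom. A useful organizing principle is the evident $A \leftrightarrow B$ symmetry of the construction, under which \eqref{Lm1}--\eqref{Lm3} are mirror images of \eqref{Lm4}--\eqref{Lm6}, so that checking one side directly and invoking the symmetry for the other cuts the calculation roughly in half.
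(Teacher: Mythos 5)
Your proposal is correct and follows the standard direct verification: expand the Leibniz identity on $A\oplus B$ over the eight $A/B$-patterns, read off the representation axioms from one projection and the compatibility conditions \eqref{Lm1}--\eqref{Lm6} from the other (some, like \eqref{Lm3} and \eqref{Lm6}, only after combining with a neighbouring case), and reverse the bookkeeping for the converse. The paper itself gives no proof here, citing \cite{28}, but your argument is exactly the case-by-case expansion the paper carries out for the analogous Leibniz-dendriform statement in Proposition \ref{M0}, so there is nothing genuinely different to compare.
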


\begin{rmk} If the linear maps $l_B,r_B: B\rightarrow \text{End}(A)$ are trivial, then the matched pair $(A,B,l_A,r_A,l_B,r_B)$
reduces to an $A$-Leibniz algebra. That is,
$(B,l_A,r_A)$
is an $A$-Leibniz algebra \cite{07}.
\end{rmk}

\begin{pro} \label{M0} Let $(A_{1},\succ_{1},\prec_{1})$ and
$(A_{2},\succ_{2},\prec_{2})$ be two Leibniz-dendriform algebras. Assume that there are linear maps
$l_{\succ_1},r_{\succ_1},l_{\prec_1},r_{\prec_1}:A_1\longrightarrow \hbox{End}(A_2)$
and $l_{\succ_2},r_{\succ_2},l_{\prec_2},r_{\prec_2}:A_2\longrightarrow \hbox{End}(A_1)$ such that
$(A_2,l_{\succ_1},r_{\succ_1},l_{\prec_1},r_{\prec_1})$ is a representation of $(A_1,\succ_1,\prec_1)$
and $(A_1,l_{\succ_2},r_{\succ_2},l_{\prec_2},r_{\prec_2})$ is a representation of $(A_2,\succ_2,\prec_2)$.
Moreover, the following compatible conditions hold for all $x,y\in A_{1}$ and $a,b\in A_{2}$:
\begin{align}
&\label{Lm1}r_{\succ_2}(a)(x\circ_1 y)=x\succ_1 r_{\succ_2}(a)y-y\succ_1 r_{\succ_2}(a)x+r_{\succ_2}(l_{\succ_1}(y)a)x-r_{\succ_2}(l_{\succ_1}(x)a)y,\\
&\label{Lm2}(r_{\circ_2}(a)x)\succ_1 y+l_{\succ_2}(l_{\circ_1}(x)a)y=x\succ_{1}l_{\succ_2}(a)y+r_{\succ_2}(r_{\succ_1}(y)a)x-l_{\succ_2}(a)(x\succ_1 y),\\
&\label{Lm3}l_{\succ_2}(a)(x\succ_1 y)-x\succ_1(l_{\succ_2}(a)y)-r_{\succ_2}(r_{\succ_1}(y)a)x=(l_{\circ_2}(a)x)\succ_1 y+l_{\succ_2}(r_{\circ_1}(x)a)y,\\
&\label{Lm4}r_{\succ_1}(x)(a\circ_2 b)=a\succ_2 r_{\succ_1}(x)b-b\succ_2 r_{\succ_1}(x)a+r_{\succ_1}(l_{\succ_2}(b)x)a-r_{\succ_1}(l_{\succ_2}(a)x)b,\\
&\label{Lm5}(r_{\circ_1}(x)a)\succ_2 b+l_{\succ_1}(l_{\circ_2}(a)x)b=a\succ_{2}l_{\succ_1}(x)b+r_{\succ_1}(r_{\succ_2}(b)x)a-l_{\succ_1}(x)(a\succ_2 b),\\
&\label{Lm6}l_{\succ_1}(x)(a\succ_2 b)-a\succ_2(l_{\succ_1}(x)b)-r_{\succ_1}(r_{\succ_2}(b)x)a=(l_{\circ_1}(x)a)\succ_2 b+l_{\succ_1}(r_{\circ_2}(a)x)b,\\
&\label{Lm7}x\succ_1(r_{\prec_2}(a)y)+r_{\succ_2}(l_{\prec_1}(y)a)x-r_{\prec_2}(a)(x\succ_1 y)=y\prec_1(r_{\circ_2}(a)x)+r_{\prec_2}(l_{\circ_1}(x)a)y,\\
&\label{Lm8}x\succ_1(l_{\prec_2}(a)y)+r_{\succ_2}(r_{\prec_1}(y)a)x-(r_{\succ_2}(a)x)\prec_1 y-l_{\prec_2}(l_{\succ_1}(x)a)y
=l_{\prec_2}(a)(x\circ_1y),\\
&\label{Lm9}l_{\succ_2}(a)(x\prec_1 y)-(l_{\succ_2}(a)x)\prec_1 y-l_{\prec_2}(r_{\succ_1}(x)a)y=x\prec_1(l_{\circ_2}(a)y)+r_{\prec_2}(r_{\circ_1}(y)a)x,\\
&\label{Lm10}a\succ_2(r_{\prec_1}(x)b)+r_{\succ_1}(l_{\prec_2}(b)x)a-r_{\prec_1}(x)(a\succ_2 b)=b\prec_2(r_{\circ_1}(x)a)+r_{\prec_1}(l_{\circ_2}(a)x)b,\\
&\label{Lm11}a\succ_2(l_{\prec_1}(x)b)+r_{\succ_1}(r_{\prec_2}(b)x)a-(r_{\succ_1}(x)a)\prec_2 b-l_{\prec_1}(l_{\succ_2}(a)x)b
=l_{\prec_1}(x)(a\circ_2 b),\\
&\label{Lm12}l_{\succ_1}(x)(a\prec_2 b)-(l_{\succ_1}(x)a)\prec_2 b-l_{\prec_1}(r_{\succ_2}(a)x)b=a\prec_2(l_{\circ_1}(x)b)+r_{\prec_1}(r_{\circ_2}(b)x)a,\\
&\label{Lm13}x\prec_1(r_{\circ_2}(a)y)+r_{\prec_2}(l_{\circ_1}(y)a)x=r_{\prec_2}(a)(x\prec_1y)+y\succ_1(r_{\prec_2}(a)x)+r_{\succ_2}(l_{\prec_1}(x)a)y,\\
&\label{Lm14}x\prec_1(l_{\circ_2}(a)y)+r_{\prec_2}(r_{\circ_1}(y)a)x=(r_{\prec_2}(a)x)\prec_1y+l_{\prec_2}(l_{\prec_1}(x)a)y+l_{\succ_2}(a)(x\prec_1y),\\
&\label{Lm15}(l_{\prec_2}(a)x)\prec_1 y+l_{\prec_2}(r_{\prec_1}(x)a)y+x\succ_1(l_{\prec_2}(a)y)+r_{\succ_2}(r_{\prec_1}(y)a)x=l_{\prec_2}(a)(x\circ_1y),\\
&\label{Lm16}a\prec_2(r_{\circ_1}(x)b)+r_{\prec_1}(l_{\circ_2}(b)x)a=r_{\prec_1}(x)(a\prec_2 b)+b\succ_2(r_{\prec_1}(x)a)+r_{\succ_1}(l_{\prec_2}(a)x)b,\\
&\label{Lm17}a\prec_2(l_{\circ_1}(x)b)+r_{\prec_1}(r_{\circ_2}(b)x)a=(r_{\prec_1}(x)a)\prec_2 b+l_{\prec_1}(l_{\prec_2}(a)x)b+l_{\succ_1}(x)(a\prec_2 b),\\
&\label{Lm18}(l_{\prec_1}(x)a)\prec_2 b+l_{\prec_1}(r_{\prec_2}(a)x)b+a\succ_2(l_{\prec_1}(x)b)+r_{\succ_1}(r_{\prec_2}(b)x)a=l_{\prec_1}(x)(a\circ_2 b),
\end{align}
where
$l_{\circ_1}=l_{\succ_1}+l_{\prec_1},\ l_{\circ_2}=l_{\succ_2}+l_{\prec_2},\
r_{\circ_1}=r_{\succ_1}+r_{\prec_1},\ r_{\circ_2}=r_{\succ_2}+r_{\prec_2}.$
Define two binary operations $\succ$ and $\prec$ on the direct sum $A_1\oplus A_2$ of the underlying vector spaces
of $A_1$ and $A_2$ by
\begin{align*}&(x+a)\succ(y+b)=x\succ_{1}y+l_{\succ_2}(a)y+r_{\succ_2}(b)x+a\succ_{2}b+l_{\succ_1}(x)b+r_{\succ_1}(y)a,\\&
(x+a)\prec(y+b)=x\prec_{1}y+l_{\prec_2}(a)y+r_{\prec_2}(b)x+a\prec_{2}b+l_{\prec_1}(x)b+r_{\prec_1}(y)a.
\end{align*}
Then $(A_1\oplus A_2,\succ,\prec)$ is a Leibniz-dendriform algebra.
Denote this Leibniz-dendriform algebra by $A_1\bowtie A_2$ and
$(A_{1},A_{2},l_{\succ_1},r_{\succ_1},l_{\prec_1},r_{\prec_1},l_{\succ_2},r_{\succ_2},l_{\prec_2},r_{\prec_2})$
satisfying the above conditions is called a {\bf matched pair of
Leibniz-dendriform algebras}. Conversely, any Leibniz-dendriform algebra that can be decomposed into a
direct sum of two Leibniz-dendriform subalgebras is obtained from a matched pair of Leibniz-dendriform algebras.
\end{pro}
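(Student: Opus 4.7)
The proof is by direct verification in both directions. For the forward direction, I substitute a generic triple $(x+a,y+b,z+c)$ with $x,y,z\in A_1$ and $a,b,c\in A_2$ into each of (\ref{Ld1})--(\ref{Ld3}) for the putative structure on $A_1\oplus A_2$. Expanding via the direct-sum formulas for $\succ$ and $\prec$, both sides split naturally into an $A_1$-component and an $A_2$-component, and each axiom decouples into two independent identities. Triples with all three arguments in $A_1$ (respectively $A_2$) reduce to the Leibniz-dendriform axioms on $(A_1,\succ_1,\prec_1)$ (respectively $(A_2,\succ_2,\prec_2)$) and contribute nothing new, so only the six ``mixed'' configurations per axiom carry content.

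For each mixed configuration, one of the two component equations recovers a representation axiom from (\ref{R1})--(\ref{R9})---either for $(A_2;l_{\succ_1},r_{\succ_1},l_{\prec_1},r_{\prec_1})$ over $(A_1,\succ_1,\prec_1)$, or, in the symmetric case, for $(A_1;l_{\succ_2},r_{\succ_2},l_{\prec_2},r_{\prec_2})$ over $(A_2,\succ_2,\prec_2)$---while the other component yields exactly one of the compatibility conditions (\ref{Lm1})--(\ref{Lm18}). Three axioms times six mixed configurations is precisely eighteen; a useful sanity check is that (\ref{Lm1})--(\ref{Lm6}) arise from (\ref{Ld1}), (\ref{Lm7})--(\ref{Lm12}) from (\ref{Ld2}), and (\ref{Lm13})--(\ref{Lm18}) from (\ref{Ld3}), with each block of six splitting further into two triples related by the $A_1\leftrightarrow A_2$ symmetry.

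For the converse, assume $A=A_1\oplus A_2$ decomposes as a direct sum of Leibniz-dendriform subalgebras. For $x\in A_1$ and $a\in A_2$, define $r_{\succ_2}(a)x$ and $l_{\succ_1}(x)a$ as the $A_1$- and $A_2$-components of $x\succ a$, and define the remaining six cross maps analogously from $a\succ x$, $x\prec a$, and $a\prec x$. The subalgebra hypothesis forces the product on $A_1\oplus A_2$ to take the stated form. Applying (\ref{Ld1})--(\ref{Ld3}) to arbitrary mixed triples and projecting onto $A_1$ and $A_2$ then recovers precisely the representation axioms together with (\ref{Lm1})--(\ref{Lm18}), exhibiting the matched pair structure.

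The main obstacle is organizational rather than conceptual: each (axiom, mixed configuration) pair produces a long expansion, and correctly pairing the resulting terms with the eighteen compatibility conditions demands careful bookkeeping. Working systematically through all (axiom, mixed configuration, component projection) triples renders the verification routine, if lengthy.
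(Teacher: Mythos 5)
Your proposal is correct and follows essentially the same route as the paper: a direct case-by-case verification in which each of the eighteen mixed configurations of (\ref{Ld1})--(\ref{Ld3}) splits into one representation axiom from (\ref{R1})--(\ref{R9}) and one compatibility condition from (\ref{Lm1})--(\ref{Lm18}), with the grouping by axiom matching the paper's enumeration exactly. The converse is likewise handled by projecting the subalgebra products onto components, as the paper asserts.
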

\begin{proof} To show that $(A_1 \oplus A_2, \ast, \circ)$ is a Leibniz-dendriform algebra,
 it suffices to verify that Eqs.~(\ref{Ld1})–(\ref{Ld3}) are satisfied for all
 $x + a, y + b, z + c \in A_1 \oplus A_2$, where $x, y, z \in A_1$ and $a, b, c \in A_2$.
 This can be established through a direct computation, as outlined in the following cases:
\begin{enumerate}
\item Eq.~(\ref{Ld1}) holds on $(x, y, a)$ if and only if Eqs.~(\ref{R1}) and (\ref{Lm1}) hold.
\item Eq.~(\ref{Ld1}) holds on $(x, a, y)$ if and only if Eqs.~(\ref{R3}) and (\ref{Lm2}) hold.
\item Eq.~(\ref{Ld1}) holds on $(a, x, y)$ if and only if Eqs.~(\ref{R2}) and (\ref{Lm3}) hold.
\item Eq.~(\ref{Ld1}) holds on $(a,b, x)$ if and only if Eqs.~(\ref{R1}) and (\ref{Lm4}) hold.
\item Eq.~(\ref{Ld1}) holds on $(a, x, b)$ if and only if Eqs.~(\ref{R3}) and (\ref{Lm5}) hold.
\item Eq.~(\ref{Ld1}) holds on $(x, a, b)$ if and only if Eqs.~(\ref{R2}) and (\ref{Lm6}) hold.
\item Eq.~(\ref{Ld2}) holds on $(x, y,a)$ if and only if Eqs.~(\ref{R4}) and (\ref{Lm7}) hold.
\item Eq.~(\ref{Ld2}) holds on $(x,a, y)$ if and only if Eqs.~(\ref{R6}) and (\ref{Lm8}) hold.
\item Eq.~(\ref{Ld2}) holds on $(a, x, y)$ if and only if Eqs.~(\ref{R5}) and (\ref{Lm9}) hold.
\item Eq.~(\ref{Ld2}) holds on $( a, b,x)$ if and only if Eqs.~(\ref{R4}) and (\ref{Lm10}) hold.
\item Eq.~(\ref{Ld2}) holds on $(a, x, b)$ if and only if Eqs.~(\ref{R6}) and (\ref{Lm11}) hold.
\item Eq.~(\ref{Ld2}) holds on $(a, b, x)$ if and only if Eqs.~(\ref{R5}) and (\ref{Lm12}) hold.
\item Eq.~(\ref{Ld3}) holds on $(x, y,a)$ if and only if Eqs.~(\ref{R7}) and (\ref{Lm13}) hold.
\item Eq.~(\ref{Ld3}) holds on $(x,a, y)$ if and only if Eqs.~(\ref{R9}) and (\ref{Lm14}) hold.
\item Eq.~(\ref{Ld3}) holds on $(a, x, y)$ if and only if Eqs.~(\ref{R8}) and (\ref{Lm15}) hold.
\item Eq.~(\ref{Ld3}) holds on $( a, b,x)$ if and only if Eqs.~(\ref{R7}) and (\ref{Lm16}) hold.
\item Eq.~(\ref{Ld3}) holds on $(a, x, b)$ if and only if Eqs.~(\ref{R9}) and (\ref{Lm17}) hold.
\item Eq.~(\ref{Ld3}) holds on $(a, b, x)$ if and only if Eqs.~(\ref{R8}) and (\ref{Lm18}) hold.
\end{enumerate}
\end{proof}

\begin{rmk} \label{La} \begin{enumerate}
\item If the linear maps
 $l_{\succ_2},r_{\succ_2},l_{\prec_2},r_{\prec_2}: A_{2}\rightarrow \text{End}(A_1)$ are trivial, then the matched pair $(A_{1},A_{2},l_{\succ_1},r_{\succ_1},l_{\prec_1},r_{\prec_1},l_{\succ_2},r_{\succ_2},l_{\prec_2},r_{\prec_2})$
reduces to an A-Leibniz-dendriform algebra. That is,
$(A_{2},l_{\succ_1},r_{\succ_1},l_{\prec_1},r_{\prec_1})$
is an A-Leibniz-dendriform algebra.
For the completeness, we list it in detail. Let $(A_{1},\succ_{1},\prec_{1})$ and
$(A_{2},\succ_{2},\prec_{2})$ be two Leibniz-dendriform algebras. Assume that there are linear maps
$l_{\succ_1},r_{\succ_1},l_{\prec_1},r_{\prec_1}:A_1\longrightarrow \hbox{End}(A_2)$
 such that
$(A_2,l_{\succ_1},r_{\succ_1},l_{\prec_1},r_{\prec_1})$ is a representation of $(A_1,\succ_1,\prec_1)$.
Moreover, the following compatible conditions hold:
\begin{align*}&(r_{\circ_1}(x)a)\succ_2 b=a\succ_{2}l_{\succ_1}(x)b-l_{\succ_1}(x)(a\succ_2 b)=-(l_{\circ_1}(x)a)\succ_2 b,\\
&r_{\succ_1}(x)(a\circ_2 b)=a\succ_2 r_{\succ_1}(x)b-b\succ_2 r_{\succ_1}(x)a,\\
&a\succ_2(r_{\prec_1}(x)b)-r_{\prec_1}(x)(a\succ_2 b)=b\prec_2(r_{\circ_1}(x)a),\\
&a\succ_2(l_{\prec_1}(x)b)-(r_{\succ_1}(x)a)\prec_2 b
=l_{\prec_1}(x)(a\circ_2 b),\\
&l_{\succ_1}(x)(a\prec_2 b)-(l_{\succ_1}(x)a)\prec_2 b=a\prec_2(l_{\circ_1}(x)b),\\
&a\prec_2(r_{\circ_1}(x)b)=r_{\prec_1}(x)(a\prec_2 b)+b\succ_2(r_{\prec_1}(x)a),\\
&a\prec_2(l_{\circ_1}(x)b)=(r_{\prec_1}(x)a)\prec_2 b+l_{\succ_1}(x)(a\prec_2 b),\\
&(l_{\prec_1}(x)a)\prec_2 b+a\succ_2(l_{\prec_1}(x)b)=l_{\prec_1}(x)(a\circ_2 b)
\end{align*}
for all $x,y\in A_1,a,b\in A_2.$ Then $(A_{2},l_{\succ_1},r_{\succ_1},l_{\prec_1},r_{\prec_1})$ is called
  an A-Leibniz-dendriform algebra.
  \item If the linear maps
 $l_{\succ_2},r_{\succ_2},l_{\prec_2},r_{\prec_2}: A_{2}\rightarrow \text{End}(A_1)$ and $(\succ_{2},\prec_{2})$ are trivial, then the matched pair $(A_{1},A_{2},l_{\succ_1},r_{\succ_1},l_{\prec_1},r_{\prec_1},l_{\succ_2},r_{\succ_2},l_{\prec_2},r_{\prec_2})$
reduces to the semidirect product. Denote it by $A_{1}\ltimes A_2$.
  \end{enumerate}
\end{rmk}

\begin{cor}\label{Ma} If
$(A_{1},A_{2},l_{\succ_1},r_{\succ_1},l_{\prec_1},r_{\prec_1},l_{\succ_2},r_{\succ_2},l_{\prec_2},r_{\prec_2})$ is
a matched pair of Leibniz-dendriform algebras, then
$(A_{1},A_{2},l_{\succ_1}+l_{\prec_1},r_{\succ_1}+r_{\prec_1},l_{\succ_2}+l_{\prec_2},r_{\succ_2}+r_{\prec_2})$
is a matched pair of Leibniz algebras.
\end{cor}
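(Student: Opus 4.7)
The plan is to deduce the matched pair of Leibniz algebras by passing through the Leibniz-dendriform algebra on $A_1 \oplus A_2$ provided by Proposition \ref{M0} and then taking its associated Leibniz algebra. This route avoids redoing any genuine verification.

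First, by Proposition \ref{M0}, the hypothesis yields a Leibniz-dendriform algebra structure $(A_1 \oplus A_2, \succ, \prec)$ in which $A_1$ and $A_2$ sit as Leibniz-dendriform subalgebras. Since the associated Leibniz algebra of a Leibniz-dendriform algebra is obtained by setting $\circ = \succ + \prec$, and closure under $\succ$ and $\prec$ forces closure under $\circ$, the associated Leibniz algebra $(A_1 \oplus A_2, \circ)$ has $(A_1, \circ_1)$ and $(A_2, \circ_2)$ as Leibniz subalgebras.

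Next, I would expand $(x+a) \circ (y+b) = (x+a) \succ (y+b) + (x+a) \prec (y+b)$ using the formulas for $\succ$ and $\prec$ from Proposition \ref{M0} and regroup the terms by their $A_1$- and $A_2$-components. A direct bookkeeping computation gives
\begin{align*}
(x+a) \circ (y+b) &= \bigl(x \circ_1 y + (l_{\succ_2}+l_{\prec_2})(a)\, y + (r_{\succ_2}+r_{\prec_2})(b)\, x\bigr) \\
&\quad + \bigl(a \circ_2 b + (l_{\succ_1}+l_{\prec_1})(x)\, b + (r_{\succ_1}+r_{\prec_1})(y)\, a\bigr).
\end{align*}
This matches exactly the Leibniz algebra product displayed in Proposition \ref{d1} under the identification $l_A = l_{\succ_1} + l_{\prec_1}$, $r_A = r_{\succ_1} + r_{\prec_1}$, $l_B = l_{\succ_2} + l_{\prec_2}$, and $r_B = r_{\succ_2} + r_{\prec_2}$.

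Finally, since $(A_1 \oplus A_2, \circ)$ is a Leibniz algebra decomposing as a direct sum of two Leibniz subalgebras, the converse part of Proposition \ref{d1} immediately guarantees that $(A_1, A_2, l_A, r_A, l_B, r_B)$ is a matched pair of Leibniz algebras. There is no serious obstacle here: the argument rests entirely on the two preceding propositions, and the only computation involved is the bilinear regrouping above. One could alternatively verify the corollary by summing the eighteen compatibility relations of Proposition \ref{M0} in suitable pairs to recover the six relations required by Proposition \ref{d1}, but the conceptual route through the associated Leibniz algebra is cleaner and sidesteps that direct calculation.
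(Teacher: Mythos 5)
Your proposal is correct and follows essentially the same route as the paper: both pass through Proposition \ref{M0} to obtain the Leibniz-dendriform algebra on $A_1\oplus A_2$, expand $\circ=\succ+\prec$ and regroup to identify the product of Proposition \ref{d1} with $l_A=l_{\succ_1}+l_{\prec_1}$, etc., and then invoke Proposition \ref{d1} to conclude. The only difference is that you spell out the appeal to the converse part of Proposition \ref{d1} (via the subalgebra decomposition) slightly more explicitly than the paper does.
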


\begin{proof}
In view of Proposition \ref{M0}, there is a Leibniz-dendriform algebra $(A_1\bowtie A_2,\succ,\prec)$, whose associated
Leibniz algebra is defined by
\begin{align*}&(x+a)\circ(y+b)=(x+a)\succ(y+b)+(x+a)\prec(y+b)
\\=&x\succ_{1}y+l_{\succ_2}(a)y+r_{\succ_2}(b)x+a\succ_{2}b+l_{\succ_1}(x)b+r_{\succ_1}(y)a
+x\prec_{1}y+l_{\prec_2}(a)y+r_{\prec_2}(b)x\\&+a\prec_{2}b+l_{\prec_1}(x)b+r_{\prec_1}(y)a
\\=&x\circ_{1}y+(l_{\succ_2}+l_{\prec_2})(a)y+(r_{\succ_2}+r_{\prec_2})(b)x+a\circ_{2}b+(l_{\succ_1}+l_{\prec_1})(x)b+(r_{\succ_1}+r_{\prec_1})(y)a
.\end{align*}
Combining Proposition \ref{d1}, we get the conclusion.
\end{proof}

\begin{defi}\cite{34}
A Manin triple of Leibniz-dendriform algebras is a triple
of Leibniz-dendriform algebras $(A,A_{1},A_{2})$ satisfying
\begin{enumerate}
\item $A=A_{1}\oplus A_{2}$ as vector spaces.
\item $(A,\omega)$ is a quadratic Leibniz-dendriform algebra.
\item $A_{1},A_{2}$ are isotropic subalgebras of $A$, that is,
$\omega(x,y) =\omega(a,b) =0$ for all $x,y\in A_1$ and $a,b\in A_2$.
\end{enumerate}
\end{defi}

\begin{thm} \label{Mp1}
 Let $(A,\succ_{A},\prec_{A})$ and $(A^{*},\succ_{A^{*}},\prec_{A^{*}})$ be two Leibniz-dendriform algebras and their associated
 Leibniz algebras be $(A,\circ)$ and $(A^{*},\ast)$ respectively. Then the following conditions are equivalent:
\begin{enumerate}
\item $(A\oplus A^{*},\circ,\omega)$ is a phase space of the Leibniz algebra $(A,\circ)$.
\item $(A\oplus A^{*},A, A^{*},\omega)$ is a Manin triple of Leibniz-dendriform algebras with the
bilinear form given by Eq.~\eqref{C2}
and the isotropic subalgebras are $A$ and $ A^{*}$.
 \item $(A,A^{*}, L_{\circ}^{*},R_{\succ_A}^{*}+L_{\prec_A}^{*},-L_{\prec_{A}}^*,-L_{\circ}^{*}R_{\circ}^{*},
 L_{\ast}^{*},R_{\succ_A^{*}}^{*}+L_{\prec_A^{*}}^{*},-L_{\prec_{A^{*}}}^*,-L_{\ast}^{*}-R_{\ast}^{*})$
 is a matched pair of Leibniz-dendriform algebras.
 \item $(A, A^*,L_{\succ_{A}}^*,-L_{\succ_A}^{*}-R_{\prec_A}^{*},L_{\succ_{A^*}}^{*},-L_{\succ_A^{*}}^{*}-R_{\prec_A^{*}}^{*})$
 is a matched pair of Leibniz algebras.
 \end{enumerate}
\end{thm}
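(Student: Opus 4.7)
The plan is to prove the cyclic chain $(1)\Rightarrow(2)\Rightarrow(3)\Rightarrow(4)\Rightarrow(1)$, leaning on Theorem~\ref{Sq}, Proposition~\ref{M0}, Proposition~\ref{Dr}, and Corollary~\ref{Ma} as the main tools.

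For $(1)\Leftrightarrow(2)$: Theorem~\ref{Sq} already supplies a bijection between symplectic Leibniz algebras and quadratic Leibniz-dendriform algebras via the invariance condition~\eqref{C1}. Since $\omega$ in Eq.~\eqref{C2} is the standard hyperbolic pairing, $A$ and $A^{*}$ are automatically isotropic, so the phase-space conditions translate directly into the Manin-triple conditions; the compatible Leibniz-dendriform structure on $A\oplus A^{*}$ restricts to the prescribed ones on $A$ and $A^{*}$ precisely because each is required to be a Leibniz(-dendriform) subalgebra.

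For $(2)\Rightarrow(3)$: given a Manin triple, $A\oplus A^{*}$ decomposes as a direct sum of two Leibniz-dendriform subalgebras, so the converse part of Proposition~\ref{M0} produces a matched pair of Leibniz-dendriform algebras. The eight actions must then be identified. I would test $\omega(x\succ \xi, y)$, $\omega(\xi\succ x, y)$, $\omega(x\prec \xi, y)$, $\omega(\xi\prec x, y)$ for $x,y\in A$ and $\xi\in A^{*}$ using Eq.~\eqref{C1} together with Eq.~\eqref{C2}, reading off each of $l_{\succ_1},r_{\succ_1},l_{\prec_1},r_{\prec_1}:A\to\text{End}(A^{*})$ as a dual map. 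The resulting formulas coincide with the dual representation described in Proposition~\ref{Dr}(3) applied to the regular representation of $(A,\succ_A,\prec_A)$, giving exactly $(L_{\circ}^{*},\,R_{\succ_A}^{*}+L_{\prec_A}^{*},\,-L_{\prec_A}^{*},\,-L_{\circ}^{*}-R_{\circ}^{*})$. A symmetric computation on the $A^{*}$-side yields the analogous dual representation of $(A^{*},\succ_{A^{*}},\prec_{A^{*}})$ on $A$.

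For $(3)\Rightarrow(4)$ this is a direct application of Corollary~\ref{Ma}: summing $L_{\circ}^{*}+(-L_{\prec_A}^{*})=L_{\succ_A}^{*}$ and $(R_{\succ_A}^{*}+L_{\prec_A}^{*})+(-L_{\circ}^{*}-R_{\circ}^{*})=-L_{\succ_A}^{*}-R_{\prec_A}^{*}$, and analogously on the $A^{*}$-side, reproduces exactly the actions in (4). For $(4)\Rightarrow(1)$, Proposition~\ref{d1} supplies a Leibniz algebra structure on $A\oplus A^{*}$; the pair $(L_{\succ_A}^{*},-L_{\succ_A}^{*}-R_{\prec_A}^{*})$ is precisely the dual representation of the associated Leibniz algebra $(A,\circ)$ furnished by Proposition~\ref{Dr}(5), and symmetrically on $A^{*}$. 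Hence the symplectic identity~\eqref{Bs} reduces on mixed triples, via $\langle f^{*}(x)u^{*},v\rangle=-\langle u^{*},f(x)v\rangle$, to a tautology; on pure $A$- or pure $A^{*}$-triples it is trivially zero because $\omega$ vanishes there. Thus $(A\oplus A^{*},\circ,\omega)$ is a phase space of $(A,\circ_A)$, closing the cycle. The main obstacle is the bookkeeping in $(2)\Rightarrow(3)$: each of the eight identifications is a short manipulation, but one must carefully track signs and the precise combinations $L_{\circ}^{*}$, $R_{\succ_A}^{*}+L_{\prec_A}^{*}$, etc., that emerge from the invariance relations~\eqref{C1} together with the definition of the dual action; after this dictionary is in place, the remaining implications are essentially immediate applications of previously established results.
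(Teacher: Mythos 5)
Your proposal follows the same cyclic chain $(1)\Rightarrow(2)\Rightarrow(3)\Rightarrow(4)\Rightarrow(1)$ and the same tools as the paper: the equivalence of phase spaces and Manin triples via Theorem~\ref{Sq} (the paper cites the corresponding result of~\cite{34}), the identification of the eight actions with the dual regular representation via Proposition~\ref{M0} and the invariance conditions~\eqref{C1}--\eqref{C2}, Corollary~\ref{Ma} for $(3)\Rightarrow(4)$, and a direct verification of the symplectic identity~\eqref{Bs} on mixed triples for $(4)\Rightarrow(1)$. This matches the paper's argument in both structure and substance (your sign $-L_{\circ}^{*}-R_{\circ}^{*}$ is the correct reading of the fourth component, consistent with Proposition~\ref{Dr}(3)).
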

\begin{proof}
 $(a)\Longleftrightarrow (b)$. It follows from Theorem 3.10 \cite{34}.

$(b)\Longrightarrow (c)$ By Proposition \ref{M0}, there are linear maps
$l_{\succ_A},r_{\succ_A},l_{\prec_A},r_{\prec_A}:A\longrightarrow \hbox{End}(A^{*})$ and
$l_{\succ_{A^{*}}},r_{\succ_{A^{*}}},l_{\prec_{A^{*}}},r_{\prec_{A^{*}}}:A^{*}\longrightarrow \hbox{End}(A)$ such that
$(A,A^{*},l_{\succ_A},r_{\succ_A},l_{\prec_A},r_{\prec_A}, l_{\succ_{A^{*}}},r_{\succ_{A^{*}}},l_{\prec_{A^{*}}},r_{\prec_{A^{*}}})$
is a matched pair of Leibniz-dendriform algebras and
\begin{align*}
&x\succ b=r_{\succ_{A^{*}}}(b)x+l_{\succ_A}(x)b,\ \ \ b\succ x=l_{\succ_{A^{*}}}(b)x+r_{\succ_A}(x)b,
\\&
x\prec b=r_{\prec_{A^{*}}}(b)x+l_{\prec_A}(x)b,\ \ \ b\prec x=l_{\prec_{A^{*}}}(b)x+r_{\prec_A}(x)b,
\end{align*}
for all $x\in A$ and $b\in A^{*}$.
Then we obtain,
\begin{align*}\langle l_{\succ_A}(x)b,y\rangle &=\omega (x\succ b,y)=-\omega(b,x\circ y)
\\&=-\langle b,(L_{\succ_A}(x)+L_{\prec_A}(x))y
\rangle\\&=\langle (L_{\succ_A}^{*}+L_{\prec_A}^{*})(x)b,y
 \rangle,\end{align*}
which indicates that
$l_{\succ_A}=L_{\succ_A}^{*}+L_{\prec_A}^{*}=L_{\circ}^{*}$.
Analogously, $r_{\succ_A}=R_{\succ_A}^{*}+L_{\prec_A}^{*},~l_{\prec_A}=-L_{\prec_A}^{*},
~r_{\prec_A}=L_{\circ}^{*}+R_{\circ}^{*},~
l_{\succ_{A^{*}}}=L_{\ast}^{*}
,~r_{\succ_{A^{*}}}=R_{\succ_A^{*}}^{*}+L_{\prec_A^{*}}^{*}
,~l_{\prec_{A^{*}}}=-L_{\prec_A^{*}}^{*}
,~r_{\prec_{A^{*}}}=L_{\ast}^{*}+R_{\ast}^{*}$. Thus, Item (b) holds.

$(c)\Longrightarrow (d)$ It can be obtained by Corollary \ref {Ma}.

$(d)\Longrightarrow (a)$  Assume that $(A, A^*,L_{\succ_{A}}^*,-L_{\succ_A}^{*}-R_{\prec_A}^{*},L_{\succ_{A^*}}^{*},-L_{\succ_A^{*}}^{*}-R_{\prec_A^{*}}^{*})$
is a matched pair of Leibniz algebras. Then $(A\bowtie A^{*},\circ)$
 is a Leibniz algebra and $A,A^*$ are Leibniz subalgebras of $A\bowtie A^{*}$, where
  $\circ$ is given by
\begin{equation*}x\circ a=-(L_{\succ_A^{*}}^{*}+R_{\prec_A^{*}}^{*})(a)x+L_{\succ_{A}}^*(x) a,\ \ \
a\circ x=L_{\succ_{A^*}}^{*}(a) x-(L_{\succ_A}^{*}+R_{\prec_A}^{*})(x)a, \ \ \forall ~x\in A, a\in A^{*}.\end{equation*}
Define a bilinear map on $A\oplus A^* $by Eq.~\eqref{C2}, we get for all $x,y\in A, a\in A^{*}$
\begin{align*}&\omega(a,x\circ y)+\omega(y,x\circ a)-\omega(x,y\circ a+a\circ y)
\\=&\omega(a,x\circ y)+\omega(y,-R_{\prec_{A}}^*(x) a)+\omega(x,R_{\prec_{A^*}}^{*}(a) y+R_{\prec_{A}}^*(y) a)
\\=&\langle a,x\circ y\rangle+\langle y,L_{\succ_{A}}^*(x) a\rangle
+\langle x,R_{\prec_{A}}^*(y) a\rangle
\\=&\langle a,x\circ y\rangle-\langle x\succ_A y, a\rangle
-\langle x\prec_A y, a\rangle
\\=&0,\end{align*}
that is, $\omega$ satisfies Eq.~\eqref{Bs}.
It follows that $(A\oplus A^{*},\circ,\omega)$ is a phase space of the Leibniz algebra $(A,\circ)$.
The proof is completed.
\end{proof}

\section{Leibniz-dendriform bialgebras  }
In this section, we introduce the notion of Leibniz-dendriform bialgebras.
 We demonstrate that such bialgebras are equivalent to specific matched pairs of Leibniz algebras and 
 to the phase spaces of Leibniz algebras.
The study of the coboundary case naturally leads to the Leibniz-dendriform Yang-Baxter equation (LD-YBE),
whose skew-symmetric solutions yield coboundary Leibniz-dendriform bialgebras.
Moreover, we introduce the notion of $\mathcal{O}$-operators on Leibniz-dendriform algebras,
which offers a  way to construct skew-symmetric solutions of the LD-YBE.

\subsection{Leibniz-dendriform bialgebras }
\begin{defi} \label{DB1} A Leibniz-dendriform coalgebra is a triple $(A,\Delta_{\succ},\Delta_{\prec})$, where
$A$ is a vector space and $\Delta_{\succ},\Delta_{\prec}:A\longrightarrow A\otimes A$ are linear maps such that
the following conditions hold:
\begin{align}&\label{Ca1}
( \Delta\otimes I)\Delta_{\succ}=(I\otimes\Delta_{\succ})\Delta_{\succ}-(\tau \otimes I)(I\otimes\Delta_{\succ})\Delta_{\succ},\\&
\label{Ca2}(I\otimes \Delta_{\prec})\Delta_{\succ}-(\Delta_{\succ}\otimes I)\Delta_{\prec}=
(\tau\otimes I)(I \otimes \Delta)\Delta_{\prec},\\&
\label{Ca3}( I\otimes \Delta)\Delta_{\prec}=( \Delta_{\prec}\otimes I)\Delta_{\prec}+(\tau\otimes I)(I \otimes \Delta_{\prec})\Delta_{\succ},\end{align}
where $\Delta=\Delta_{\succ}+\Delta_{\prec}$.
\end{defi}

\begin{defi} \label{DB2} A Leibniz-dendriform bialgebra is a quintuple $(A,\succ,\prec,\Delta_{\succ},\Delta_{\prec})$
such that $(A,$ \ \ \  $\succ,\prec)$ is a Leibniz-dendriform algebra, $(A,\Delta_{\succ},\Delta_{\prec})$ is
a Leibniz-dendriform coalgebra and the following compatible conditions hold:
\begin{small}
\begin{align}&\label{B1}
\Delta(x\odot y)-(I\otimes L_{\odot}(x)\Delta(y)+\tau(I\otimes L_{\odot}(x))\Delta(y)+
\tau (I\otimes R_{\odot}(y))\Delta_{\succ}(x)-(I\otimes R_{\odot}(y))\Delta_{\succ}(x)=0,\\&
\label{B2}\Delta(x\succ y)-(L_{\succ}(x)\otimes I+I\otimes L_{\succ}(x))\Delta (y)-(I\otimes R_{\succ}(y))\Delta_{\odot}(x)
+\tau(I\otimes R_{\odot}(y))\Delta_{\odot}(x)=0,\\&
\label{B3}(I\otimes R_{\succ}(y))\tau\Delta_{\prec}(x)-(R_{\prec}(x)\otimes I)\Delta(y)=0,\\&
\label{B4}(\Delta_{\succ} +\tau\Delta_{\prec})(x\circ y)-(I\otimes L_{\circ}(x)+L_{\succ}(x)\otimes I)\Delta_{\odot}(y)
+(I\otimes L_{\circ}(y)+L_{\succ}(y)\otimes I)\Delta_{\odot}(x)=0,\\&
\label{B5}\Delta_{\succ}(x\circ y)-(I\otimes R_{\circ}(y))\Delta_{\succ}(x)-(I\otimes L_{\circ}(x)+L_{\odot}(x)\otimes I)\Delta_{\succ}(y)
+(L_{\odot}(y)\otimes I)\Delta_{\odot}(x)=0,\\&
\label{B6}(I\otimes R_{\circ}(y))\tau\Delta_{\prec}(x)
-(R_{\prec}(x)\otimes I)\Delta_{\succ}(y)=0,
\end{align}
\end{small}
where $x\circ y=x \succ y+x\prec y,~x\odot y=x\succ y+y\prec x,~\Delta=\Delta_{\succ}+\Delta_{\prec}, ~\Delta_{\odot}=\Delta_{\succ}+\tau\Delta_{\prec}
$ and $R_{\circ}=R_{\prec}+R_{\succ},
~L_{\circ}=L_{\prec}+L_{\succ},~L_{\odot}=R_{\prec}+L_{\succ},~R_{\odot}=L_{\prec}+R_{\succ}$.
\end{defi}

\begin{rmk}
The triple $(A, \Delta_{\succ}, \Delta_{\prec})$ forms a Leibniz-dendriform coalgebra if and only if
$(A^{*},$ \ \ \ \ $\succ_{A^{*}},\prec_{A^{*}})$ constitutes a Leibniz-dendriform algebra.
Here, $\succ_{A^{*}}$ and $\prec_{A^{*}}$
are the linear dual of $\Delta_{\succ}$ and $\Delta_{\prec}$ respectively defined by the following
relations for all $x \in A$ and $\zeta, \eta \in A^{*}$:
\begin{align}&\label{Da1}\langle \Delta_{\succ}(x),\zeta\otimes \eta\rangle=\langle x,\zeta\succ_{A^{*}} \eta\rangle
\\&\label{Da2}\langle \Delta_{\prec}(x),\zeta\otimes \eta\rangle=\langle x,\zeta\prec_{A^{*}} \eta\rangle,~\forall~x\in A, ~\zeta,\eta\in A^{*}.
\end{align}
Consequently, a Leibniz-dendriform bialgebra $(A, \succ, \prec, \Delta_{\succ}, \Delta_{\prec})$
can be denoted as $(A, \succ, \prec, A^{*},$ \ \ \ \ $ \succ_{A^{*}}, \prec_{A^{*}})$,
where the Leibniz-dendriform algebra structure on $A^{*}$ corresponds to
the Leibniz-dendriform coalgebra structure on $A$ via the above equations.
\end{rmk}

\begin{thm} \label{Mp2}
Let $(A,\succ_A,\prec_A)$ be a Leibniz-dendriform algebra and $(A,\circ )$ be the associated Leibniz algebra of $(A,\succ_{A},\prec_{A})$.
Suppose that there is a Leibniz-dendriform algebra $(A^{*}, \succ_{A^*},\prec_{A^*})$ which is induced from
a Leibniz-dendriform coalgebra $(A, \Delta_{\succ},\Delta_{\prec})$, whose associated Leibniz algebra is denoted by $(A^{*}, \ast)$. Then $(A, A^*,L_{\succ_{A}}^*,-(L_{\succ_{A}}^*+R_{\prec_{A}}^*),L_{\succ_{A^{*}}}^*,-(L_{\succ_{A^{*}}}^*+R_{\prec_{A^{*}}}^*))$
 is a matched pair of Leibniz algebras if and only if $(A,\succ,\prec,\Delta_{\succ},\Delta_{\prec})$ is a Leibniz-dendriform bialgebra.
\end{thm}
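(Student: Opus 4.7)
The plan is to dualize each of the six matched-pair axioms (\ref{Lm1})--(\ref{Lm6}) of Proposition \ref{d1}, as specialized to the representations $l_A = L_{\succ_A}^{*}$, $r_A = -(L_{\succ_A}^{*}+R_{\prec_A}^{*})$, $l_B = L_{\succ_{A^{*}}}^{*}$, $r_B = -(L_{\succ_{A^{*}}}^{*}+R_{\prec_{A^{*}}}^{*})$, and to check that the six resulting tensor identities in $A^{\otimes 3}$ coincide precisely with the bialgebra compatibility conditions (\ref{B1})--(\ref{B6}).

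First I would observe that item (5) of Proposition \ref{Dr}, applied to the regular representation of $(A,\succ_A,\prec_A)$, already supplies that $(A^{*}, L_{\succ_A}^{*}, -L_{\succ_A}^{*}-R_{\prec_A}^{*})$ is a representation of the associated Leibniz algebra $(A,\circ)$; symmetrically, $(A, L_{\succ_{A^{*}}}^{*}, -L_{\succ_{A^{*}}}^{*}-R_{\prec_{A^{*}}}^{*})$ is a representation of $(A^{*},\ast)$. Hence the representation part of the matched-pair definition is automatic, and only the six cross-compatibility axioms (\ref{Lm1})--(\ref{Lm6}) of Proposition \ref{d1} remain to be investigated, matching in number the six compatibility conditions (\ref{B1})--(\ref{B6}) defining a Leibniz-dendriform bialgebra.

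Next I would evaluate each axiom on a triple of test vectors of the appropriate type in $A$ or $A^{*}$ and apply the adjoint relations $\langle L_{\succ_A}^{*}\zeta, y\rangle = -\langle\zeta, x\succ_A y\rangle$, $\langle R_{\prec_A}^{*}\zeta, y\rangle = -\langle\zeta, y\prec_A x\rangle$, together with the defining relations (\ref{Da1})--(\ref{Da2}) to re-express $\zeta\succ_{A^{*}}\eta$ and $\zeta\prec_{A^{*}}\eta$ in terms of $\Delta_\succ, \Delta_\prec$. The various signed sums of left and right actions should collapse onto the composite operators $L_\circ, R_\circ, L_\odot, R_\odot$ and the coproduct $\Delta_\odot = \Delta_\succ + \tau\Delta_\prec$ appearing in (\ref{B1})--(\ref{B6}); a $\tau$-twist arises naturally whenever a right action of $A^{*}$ on $A$ is transported across the pairing, because right coactions are encoded on the left factor of $\Delta$ under dualization.

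The main obstacle will be bookkeeping, in particular the interaction of the sign in $r_A = -(L_{\succ_A}^{*}+R_{\prec_A}^{*})$ with the $\tau$-twist emerging from the dualization. Conditions (\ref{B3}) and (\ref{B6}), which involve a bare $\tau\Delta_\prec$ on one side, will require the most delicate cancellation between the $R_{\prec_A}^{*}$ contributions and the twist produced by the pairing; once these are resolved, the equivalence follows by matching the six translated axioms to (\ref{B1})--(\ref{B6}) pair by pair, in direct analogy with the one-pass verification carried out in the proof of Proposition \ref{M0}.
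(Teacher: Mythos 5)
Your proposal follows essentially the same route as the paper: the paper's proof also specializes the six Leibniz matched-pair axioms of Proposition \ref{d1} to $l_A=L_{\succ_A}^{*}$, $r_A=-(L_{\succ_A}^{*}+R_{\prec_A}^{*})$, $l_B=L_{\succ_{A^{*}}}^{*}$, $r_B=-(L_{\succ_{A^{*}}}^{*}+R_{\prec_{A^{*}}}^{*})$, pairs each term against test vectors via \eqref{Da1}--\eqref{Da2}, and matches the resulting identities one-for-one with \eqref{B1}--\eqref{B6} (carrying out the computation explicitly only for the first pair). Your additional remark that the representation hypotheses are automatic by item (e) of Proposition \ref{Dr} is correct and is left implicit in the paper.
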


\begin{proof} Firstly, we prove that Eq.~(\ref{Lm1}) $\Longleftrightarrow$ Eq.~(\ref{B1})
in the case of $ l_A=L_{\succ_{A}}^*,~r_{A}=-(L_{\succ_{A}}^*+R_{\prec_{A}}^*),~l_B=L_{\succ_{A^{*}}}^*,~ r_B=-(L_{\succ_{A^{*}}}^*+R_{\prec_{A^{*}}}^*)$.
For all $x,y\in A$ and $a,b\in A^{*}$, we have
\begin{align*}\langle -(L_{\succ_{A}}^*+R_{\prec_{A}}^*)(x)(a\ast b),y\rangle=\langle a\ast b,x\odot y\rangle
=\langle \Delta(x\odot y), a\otimes b\rangle,
\end{align*}
\begin{align*}\langle a\ast (L_{\succ_{A}}^*+R_{\prec_{A}}^*)(x)b,y\rangle
=\langle \Delta(y), a\otimes (L_{\succ_{A}}^*+R_{\prec_{A}}^*)(x)b\rangle
=-\langle (I\otimes L_{\odot}(x))\Delta(y), a\otimes b\rangle,
\end{align*}
\begin{align*}&\langle (L_{\succ_{A}}^*+R_{\prec_{A}}^*)(L_{\succ_{A^{*}}}^*(b)x)a,y\rangle
=-\langle a,(L_{\succ_{A^{*}}}^*(b)x)\odot y\rangle
=\langle R_{\succ_{A}}^*(y)(a),L_{\succ_{A^{*}}}^*(b)x\rangle+
\langle L_{\prec_{A}}^*(y)(a),L_{\succ_{A^{*}}}^*(b)x\rangle
\\=&-\langle b\succ_{A^*} R_{\succ_{A}}^*(y)(a),x\rangle-
\langle b\succ_{A^*} L_{\prec_{A}}^*(y)(a),x\rangle
\\=&\langle ((R_{\succ_{A}}+L_{\prec_{A}})(y)\otimes I)\tau\Delta_{\succ}(x), a\otimes b\rangle
\\=&\langle \tau(I\otimes R_{\odot_{A}})(y))\Delta_{\succ}(x), a\otimes b\rangle
\end{align*}
Analogously,
\begin{align*}\langle -b\ast (L_{\succ_{A}}^*+R_{\prec_{A}}^*)(x)b,y\rangle
=-\langle \tau(I\otimes L_{\odot_A}(x))\Delta(y), a\otimes b\rangle,
\end{align*}
\begin{align*}\langle -(L_{\succ_{A}}^*+R_{\prec_{A}}^*)(L_{\succ_{A^{*}}}^*(a)x)b,y\rangle
=-\langle (I\otimes R_{\odot_{A}})(y))\Delta_{\succ}(x), a\otimes b\rangle.
\end{align*}
Thus, Eq.~(\ref{Lm1}) $\Longleftrightarrow$ Eq.~(\ref{B1}). By the same token,
Eq.~(\ref{Lm2}) $\Longleftrightarrow$ Eq.~(\ref{B2}), Eq.~(\ref{Lm3}) $\Longleftrightarrow$ Eq.~(\ref{B3}),
 Eq.~(\ref{Lm4}) $\Longleftrightarrow$ Eq.~(\ref{B4}),
Eq.~(\ref{Lm5}) $\Longleftrightarrow$ Eq.~(\ref{B5}), Eq.~(\ref{Lm6}) $\Longleftrightarrow$ Eq.~(\ref{B6}).
The proof is completed.
\end{proof}

The following result follows directly from Theorems \ref{Mp1} and \ref{Mp2}.

\begin{thm}
Let $(A,\succ_A,\prec_A)$ be a Leibniz-dendriform algebra and $(A,\circ )$ be the associated Leibniz algebra of $(A,\succ_{A},\prec_{A})$.
Suppose that there is a Leibniz-dendriform algebra $(A^{*}, \succ_{A^*},\succ_{A^*})$ which is induced from
a Leibniz-dendriform coalgebra $(A, \Delta_{\prec}, \Delta_{\succ})$, whose associated Leibniz algebra is denoted by $(A^{*}, \ast)$. Then
the following conditions are equivalent:
\begin{enumerate}
 \item $(A\oplus A^{*},\circ,\omega)$ is a phase space of the Leibniz algebra $(A,\circ)$.
\item $(A\oplus A^{*},A, A^{*},\omega)$ is a Manin triple of Leibniz-dendriform algebras with
the bilinear form given by Eq.~(\ref{C2})
and the isotropic subalgebras are $A$ and $ A^{*}$.
 \item $(A,A^{*}, L_{\circ}^{*},L_{\prec_{A}}^{*}+R_{\succ_{A}}^{*},-L_{\prec_A}^{*},-L_{\circ}^{*}-R_{\circ}^{*},
 L_{\ast}^{*},L_{\prec_{A^{*}}}^{*}+R_{\succ_{A^{*}}}^{*},-L_{\prec_{A^{*}}}^{*},-L_{\ast}^{*}-R_{\ast}^{*})$
 is a matched pair of Leibniz-dendriform algebras.
 \item $(A,A^{*},L_{\succ_A}^{*},-L_{\succ_A}^{*}-R_{\prec_A}^{*},L_{\succ_{A^*}}^{*},-L_{\succ_{A^*}}^{*}-R_{\prec_{A^*}}^{*})$
  is a matched pair of
 Leibniz algebras.
\item  $(A,\succ_A,\prec_A,\Delta_{\succ},\Delta_{\prec})$ is a Leibniz-dendriform bialgebra.
 \end{enumerate}
\end{thm}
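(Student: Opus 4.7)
The plan is to obtain the five-way equivalence by chaining the two previously established equivalences, namely Theorem~\ref{Mp1} (covering (a)--(d)) and Theorem~\ref{Mp2} (relating (d) and (e)), after verifying that the specific choices of representation maps appearing in the two statements agree so that the chain is genuinely composable.

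First I would invoke Theorem~\ref{Mp1} applied to the Leibniz-dendriform algebra $(A,\succ_A,\prec_A)$ together with the Leibniz-dendriform structure $(A^{*},\succ_{A^{*}},\prec_{A^{*}})$ induced (via Eqs.~\eqref{Da1}--\eqref{Da2}) by the coalgebra $(A,\Delta_{\succ},\Delta_{\prec})$. This immediately yields
\[
(a) \Longleftrightarrow (b) \Longleftrightarrow (c) \Longleftrightarrow (d),
\]
where the matched pair of Leibniz algebras in (d) is built from the associated Leibniz algebras $(A,\circ)$ and $(A^{*},\ast)$ with the very representation maps $L_{\succ_A}^{*},\ -L_{\succ_A}^{*}-R_{\prec_A}^{*},\ L_{\succ_{A^{*}}}^{*},\ -L_{\succ_{A^{*}}}^{*}-R_{\prec_{A^{*}}}^{*}$ that occur in (d) of the present theorem. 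No new calculation is needed here beyond citing Theorem~\ref{Mp1}.

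Next I would apply Theorem~\ref{Mp2} to the same data: since $(A^{*},\succ_{A^{*}},\prec_{A^{*}})$ is indeed a Leibniz-dendriform algebra induced from the Leibniz-dendriform coalgebra $(A,\Delta_{\succ},\Delta_{\prec})$, its hypotheses are met, and it delivers
\[
(d) \Longleftrightarrow (e),
\]
that is, the matched pair of Leibniz algebras condition in (d) is equivalent to the Leibniz-dendriform bialgebra compatibility conditions Eqs.~\eqref{B1}--\eqref{B6} that define (e). Combining the two chains produces (a)$\Leftrightarrow$(b)$\Leftrightarrow$(c)$\Leftrightarrow$(d)$\Leftrightarrow$(e), completing the proof.

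The only subtlety, and hence the main point to check, is that the quadruple of linear maps appearing in clause (d) of Theorem~\ref{Mp1} coincides precisely with the quadruple appearing in the hypothesis of Theorem~\ref{Mp2}; this is immediate from inspection, since both use $L_{\succ_A}^{*}$, $-L_{\succ_A}^{*}-R_{\prec_A}^{*}$, $L_{\succ_{A^{*}}}^{*}$, $-L_{\succ_{A^{*}}}^{*}-R_{\prec_{A^{*}}}^{*}$. With that compatibility in hand, no independent verification of (a)$\Leftrightarrow$(e) is required, and the theorem reduces to a formal chaining of the two preceding results.
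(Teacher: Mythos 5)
Your proposal is correct and follows exactly the paper's route: the paper also obtains this five-way equivalence by combining Theorem~\ref{Mp1} (which gives (a)$\Leftrightarrow$(b)$\Leftrightarrow$(c)$\Leftrightarrow$(d)) with Theorem~\ref{Mp2} (which gives (d)$\Leftrightarrow$(e)). Your explicit check that the quadruple of representation maps in clause (d) of Theorem~\ref{Mp1} matches the hypothesis of Theorem~\ref{Mp2} is a sensible precaution that the paper leaves implicit.
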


Let $(A, \succ,\prec,\Delta_{\prec}, \Delta_{\succ})$ be
 a Leibniz-dendriform bialgebra. Then $(D=A\oplus A^{*},\succ_{D},\prec_{D})$
 is a Leibniz-dendriform algebra, where
 \begin{align}\label{Db1}(x+a)\succ_{D}(y+b)=&x\succ_A y+L_{\ast}^{*}
(a)y+(R_{\succ_{A^*}}^*+L_{\prec_{A^*}}^*)
(b)x\\&+a\succ_{A^*}b+L_{\circ}^{*}(x)b+(R_{\succ_A}^{*}+L_{\prec_A}^{*})(y)a\nonumber
,\end{align}
\begin{align}\label{Db2}
(x+a)\prec_{D}(y+b)=&x\prec_A y-L_{\prec_{A^*}}^{*}
(a)y-(R_{\ast}^{*}+L_{\ast}^{*})(b)x\\&+a\prec_{A^*}b-L_{\prec_A}^{*}
(x)b-(R_{\circ}^{*}+L_{\circ}^{*})(y)a\nonumber,
\end{align}
for all $x,y\in A,a,b\in A^{*}$.
$(D=A\oplus A^{*},\succ_{D},\prec_{D})$ is called the double Leibniz-dendriform algebra.

\subsection{Coboundary Leibniz-dendriform bialgebras and the Leibniz-dendriform Yang-Baxter equation}

\begin{defi}
A Leibniz-dendriform bialgebra $(A,\succ,\prec,\Delta_{\succ,r},\Delta_{\prec,r})$ is called coboundary
 if $\Delta_{\succ,r},\Delta_{\prec,r}$ are defined by the following equations respectively:
 \begin{align}&\label{CB1}
\Delta_{\succ,r}(x)=(L_{\odot}(x)\otimes I-I\otimes R_{\circ}(x))r,
\\&\label{CB2}\Delta_{\prec,r}(x)=(L_{\star}(x)\otimes I-I\otimes R_{\prec}(x))\tau(r),
\end{align}
 for all $x\in A$, where
 and $\circ=\succ+\prec,~
 L_{\star}=L_{\circ}+R_{\circ},~L_{\odot}=L_{\succ}+R_{\prec}$.
\end{defi}
It is direct to show that Eqs.~\eqref{B4}-\eqref{B6} hold automatically if $\Delta_{\succ,r},\Delta_{\prec,r}$
are given by Eqs.~\eqref{CB1}-\eqref{CB2}.

\begin{pro} \label{DB4}Let $(A,\succ,\prec)$ be a Leibniz-dendriform algebra. Assume that
$\Delta_{\succ},\Delta_{\prec}$ defined by Eqs.~\eqref{CB1}-\eqref{CB2}, where $r=\sum_i a_i\otimes b_i\in A\otimes A$. Then
\begin{enumerate}
\item Eq.~\eqref{Ca1} holds if and only if the following equation holds:
\begin{align}&\label{CB3}(L_{\odot}(x)\otimes I \otimes I)(r_{23}\circ r_{13}-r_{12}\odot r_{23}+r_{21}\succ r_{13})
\\&+(I\otimes L_{\odot}(x)\otimes I)(r_{21}\odot r_{13}-r_{13}\circ r_{23}-r_{12}\succ r_{23})
\nonumber\\&+(I\otimes I\otimes R_{\circ}(x))(r_{12}\circ r_{23}-r_{13}\odot r_{12}-r_{13}\star r_{21}+r_{21}\prec r_{23}
+r_{13}\circ r_{23})
\nonumber\\&+\sum_{i}(L_{\odot}(x\odot a_i)\otimes I-I\otimes R_{\prec}(x\odot a_i))(r+\tau(r)) \otimes b_i=0
\nonumber.\end{align}
\item Eq.~\eqref{Ca2} holds if and only if the following equation holds:
\begin{align}\label{CB4}&
(I\otimes I \otimes R_{\prec}(x))(r_{31}\odot r_{12}-r_{12}\circ r_{32}-r_{13}\succ r_{32})
\\&+(L_{\odot}(x)\otimes I \otimes I)(r_{12}\star r_{32}-r_{32}\prec r_{13}-r_{12}\prec r_{31})
\nonumber\\&+(I\otimes L_{\star}(x)\otimes I)(r_{13}\circ r_{32}-r_{12}\odot r_{13}+r_{31}\prec r_{32}-r_{12}\star r_{31}+r_{12}\circ r_{32})
\nonumber\\&+\sum_{i}(L_{\odot}( a_i\prec x)\otimes I\otimes I-I\otimes I\otimes R_{\prec}( a_i\prec x))(\tau\otimes I)[b_i\otimes(r+\tau(r))] =0.
\nonumber\end{align}
\item Eq.~\eqref{Ca3} holds if and only if the following equation holds:
\begin{align}\label{CB5}&(L_{\star}(x)\otimes I \otimes I)(r_{21}\odot r_{23}-r_{23}\circ r_{31}+r_{21}\star r_{32}-
r_{32}\prec r_{31}-r_{21}\circ r_{31})
\\&+(I\otimes L_{\odot}(x)\otimes I)(r_{31}\prec r_{23}-r_{21}\star r_{31}+r_{21}\prec r_{32})
\nonumber\\&+(I\otimes I\otimes R_{\prec}(x))(r_{31}\star r_{21}-r_{21}\prec r_{32}-r_{31}\prec r_{23})
\nonumber\\&+\sum_{i}(I\otimes I\otimes R_{\prec}( a_i\prec x)-I\otimes L_{\odot}( a_i\odot x))[b_i\otimes (r+\tau(r))] =0
\nonumber.\end{align}
\item  Eq.~\eqref{B1} holds if and only if the following equation holds:
\begin{align}&\label{CB6}
 [L_{\odot}(x\odot y)\otimes I- L_{\odot}(x)R_{\prec} (y)\otimes I+I\otimes  L_{\odot}(x)R_{\prec}(y)-I\otimes  R_{\prec}(x\odot y)\\&+ L_{\odot}(x)\otimes L_{\odot}(y) -L_{\odot}(y)\otimes  L_{\odot}(x)] (r+\tau(r))=0\nonumber.
 \end{align}
\item  Eq.~\eqref{B2} holds if and only if the following equation holds:
\begin{align}&\label{CB7}
 [L_{\odot}(x\succ y)\otimes I- L_{\succ}(x)L_{\odot}(y)\otimes I+
 I\otimes  L_{\succ}(x)R_{\prec}(y)-I\otimes  R_{\prec}(x\succ y)\\&+ L_{\succ}(x)\otimes R_{\prec}(y) -L_{\odot}(y)\otimes  L_{\succ}(x)] (r+\tau(r))=0\nonumber.
 \end{align}
\item Eq.~\eqref{B3} holds if and only if the following equation holds:
\begin{align}\label{CB8}
( R_{\prec}(x)\otimes R_{\prec}(y))(r+\tau(r))=0.\end{align}
\end{enumerate}
where $L_{\odot}=L_{\succ}+R_{\prec},R_{\odot}=R_{\succ}+L_{\prec},~L_{\star}=L_{\circ}+R_{\circ},~\circ=\succ+\prec$.
\end{pro}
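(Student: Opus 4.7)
The proof of each of the six items is a direct, if lengthy, computation. My plan is to substitute the coboundary formulas \eqref{CB1}--\eqref{CB2} into the left-hand side of the corresponding axiom and expand using the ansatz $r=\sum_i a_i\otimes b_i$, then regroup according to the $r_{ij}$-notation from the Notations section.

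For items (a)--(c), which concern the coalgebra axioms \eqref{Ca1}--\eqref{Ca3}, the first step is to compute the two-step composites $(\Delta_{\succ}\otimes I)\Delta_{\succ}$, $(I\otimes\Delta_{\succ})\Delta_{\succ}$, $(I\otimes\Delta_{\prec})\Delta_{\succ}$, $(\Delta_{\succ}\otimes I)\Delta_{\prec}$, $(I\otimes\Delta)\Delta_{\prec}$, $(\Delta_{\prec}\otimes I)\Delta_{\prec}$, and the $\tau$-twisted analogues. Each of these becomes a sum of triple tensors whose three slots carry entries of the form $a_i\ast a_j$, $a_i\ast b_j$, $b_i\ast a_j$, $b_i\ast b_j$ for $\ast\in\{\succ,\prec\}$, flanked by operators $L_{\odot}(x)$, $R_{\circ}(x)$, $L_{\star}(x)$, $R_{\prec}(x)$ in the slot carrying $x$. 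These are precisely the building blocks of the $r_{ij}\ast r_{k\ell}$ expressions defined in the Notations; collecting like terms yields the desired ``bulk'' combinations appearing in \eqref{CB3}--\eqref{CB5}.

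The second step is to deal with the leftover terms. These arise because, when expanding $(\Delta_{\succ}\otimes I)\Delta_{\succ}(x)$, one produces compositions like $L_{\odot}(x)L_{\odot}(a_i)$ or $R_{\circ}(x)R_{\circ}(b_i)$ acting on a single tensor factor, and these must be collapsed to a single-operator form before matching the right-hand side. Using the Leibniz-dendriform axioms \eqref{Ld1}--\eqref{Ld3} and their consequences \eqref{Ld4}--\eqref{Ld5} on the regular representation, one rewrites such compositions in terms of $L_{\odot}(x\odot a_i)$, $L_{\odot}(a_i\prec x)$, etc. The remainders that do not fit into any $r_{ij}\ast r_{k\ell}$ reorganize themselves precisely into the symmetric correction terms of the form $(L_{\odot}(\cdot)\otimes I - I\otimes R_{\prec}(\cdot))(r+\tau(r))\otimes b_i$ or $b_i\otimes(r+\tau(r))$ appearing in \eqref{CB3}--\eqref{CB5}; this is where the identity $(x\succ y+y\prec x)\prec z=0$ from \eqref{Ld4} plays the decisive role.

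For items (d)--(f), the computation is more direct since \eqref{B1}--\eqref{B3} involve only single applications of $\Delta_{\succ}$ and $\Delta_{\prec}$. After substituting \eqref{CB1}--\eqref{CB2}, each side becomes an operator expression applied to $r$ and $\tau(r)$; using the Leibniz-dendriform identities to merge composite operators such as $L_{\succ}(x)L_{\odot}(y)$ and $L_{\odot}(x\succ y)$, one reduces the equivalence to the statement that a specific operator, applied to $r+\tau(r)$, vanishes. This yields \eqref{CB6}--\eqref{CB8}.

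The main obstacle is sheer bookkeeping: each item produces between a dozen and three dozen tensor terms, and the key subtlety is recognizing which pairs combine into a single $r_{ij}\ast r_{k\ell}$ and which survive as symmetric $(r+\tau(r))$ corrections. A useful organizing strategy is to first verify the result modulo the assumption that $r$ is skew-symmetric (so $r+\tau(r)=0$), which reduces the identities to the purely bulk part involving only $r_{ij}\ast r_{k\ell}$; the full coboundary result is then recovered by reinstating the correction terms through a careful tracking of the symmetric part of $r$. Consistency with the dual representation formulas from Proposition \ref{Dr} and the derived identities \eqref{R10}--\eqref{R11} is essential throughout to keep the expansion manageable.
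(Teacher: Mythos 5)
Your proposal follows essentially the same route as the paper: substitute the coboundary formulas \eqref{CB1}--\eqref{CB2}, expand into triple tensors, regroup into the $r_{ij}\ast r_{k\ell}$ blocks, and absorb the leftover cross terms into the $(r+\tau(r))$ corrections via the Leibniz-dendriform identities (the paper carries this out explicitly only for item (a), grouping the expansion into three sums $P(1),P(2),P(3)$ and invoking Eq.~\eqref{Ld5} rather than \eqref{Ld4} for the collapse of composite operators, then declares the remaining items analogous). The only minor discrepancy is your emphasis on \eqref{Ld4} and on \eqref{R10}--\eqref{R11} as the key collapsing identities, where the paper's computation actually rests on \eqref{Ld5}; this does not affect the validity of the approach.
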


\begin{proof} Using Eqs.~\eqref{CB1}-\eqref{CB2}, we have
\begin{align*}
&( \Delta\otimes I)\Delta_{\succ}(x)-(I\otimes\Delta_{\succ})\Delta_{\succ}(x)+(\tau \otimes I)(I\otimes\Delta_{\succ})\Delta_{\succ}(x)
\\
=&\sum_{i,j}(x\odot a_i)\odot a_j\otimes  b_j\otimes  b_i-a_j\otimes  b_j\circ(x\odot a_i)\otimes b_i-a_i\odot a_j\otimes  b_j\otimes  b_i\circ x
+a_j\otimes b_j\circ a_i\otimes  b_i\circ x\\&+
(x\odot a_i)\star b_j\otimes  a_j\otimes  b_i-b_j\otimes a_j\prec(x\odot a_i)\otimes  b_i-a_i\star b_j\otimes a_j\otimes b_i\circ x
+b_j\otimes a_j\prec a_i\otimes b_i\circ x\\&+
a_i\otimes (b_i\circ x)\odot a_j\otimes b_j-a_i\otimes a_j\otimes b_j\circ(b_i\circ x)-x\odot a_i\otimes b_i\odot a_j\otimes  b_j+
x\odot a_i\otimes  a_j\otimes b_j\circ b_i\\&+a_j\otimes  a_i\otimes b_j\circ (b_i\circ x)-(b_i\circ x)\odot a_j\otimes  a_i\otimes b_j+
b_i\odot a_j\otimes x\odot a_i\otimes b_j-a_j\otimes x\odot a_i\otimes b_j\circ b_i
\\=&P(1)+P(2)+P(3),
\end{align*}
where
\begin{align*}
P(1)=&\sum_{i,j}(x\odot a_i)\odot a_j\otimes  b_j\otimes  b_i+
(x\odot a_i)\star b_j\otimes  a_j\otimes  b_i-x\odot a_i\otimes b_i\odot a_j\otimes  b_j\\&+
x\odot a_i\otimes  a_j\otimes b_j\circ b_i-(b_i\circ x)\odot a_j\otimes  a_i\otimes b_j
,\end{align*}
\begin{align*} P(2)=&
\sum_{i,j}
-a_j\otimes  b_j\circ(x\odot a_i)\otimes b_i-b_j\otimes a_j\prec(x\odot a_i)\otimes  b_i+
a_i\otimes (b_i\circ x)\odot a_j\otimes b_j\\&+b_i\odot a_j\otimes x\odot a_i\otimes b_j-a_j\otimes x\odot a_i\otimes b_j\circ b_i
,\end{align*}
\begin{align*} P(3)=&
\sum_{i,j}-a_i\odot a_j\otimes  b_j\otimes  b_i\circ x
+a_j\otimes b_j\circ a_i\otimes  b_i\circ x-a_i\star b_j\otimes a_j\otimes b_i\circ x
\\&+b_j\otimes a_j\prec a_i\otimes b_i\circ x-a_i\otimes a_j\otimes b_j\circ(b_i\circ x)+a_j\otimes  a_i\otimes b_j\circ (b_i\circ x)
,\end{align*}
By Eq.~(\ref{Ld5}), we obtain
\begin{align*}
P(1)=&\sum_{i,j}(L_{\odot}(x\odot a_i)\otimes I\otimes I)[(r+\tau(r))\otimes b_i]+
b_j\odot(x\odot a_i)\otimes  a_j\otimes  b_i\\&-(L_{\odot}(x)\otimes I \otimes I)(r_{12}\odot r_{23} - r_{23}\circ  r_{13})
-(b_j\circ x)\odot a_i\otimes  a_j\otimes b_i
\\=&\sum_{i,j}(L_{\odot}(x\odot a_i)\otimes I\otimes I)[(r+\tau(r))\otimes b_i]\\&-(L_{\odot}(x)\otimes I \otimes I)(r_{12}\odot r_{23} - r_{23}\circ  r_{13}-r_{21}\succ r_{13}),\end{align*}
\begin{align*} P(2)=&\sum_{i,j}-a_j\otimes  b_j\succ(x\odot a_i)\otimes b_i-(I\otimes R_{\prec}(x\odot a_i)\otimes I)[(r+\tau(r))\otimes b_i]
\\&+a_j\otimes (b_j\circ x)\odot a_i\otimes b_i+(I\otimes L_{\odot}(x)\otimes I) (r_{21}\odot r_{13}-r_{13}\circ  r_{23})
\\=&\sum_{i,j}(I\otimes L_{\odot}(x)\otimes I) (r_{21}\odot r_{13}-r_{13}\circ  r_{23}-r_{12}\succ r_{23})-(I\otimes R_{\prec}(x\odot a_i)\otimes I)[(r+\tau(r))\otimes b_i],\end{align*}
\begin{align*} P(3)=&
(I\otimes I\otimes R_{\circ}(x))(-r_{13}\odot r_{12}+r_{12}\circ r_{23}-r_{13}\star r_{21}+r_{21}\prec r_{23})+
a_i\otimes a_j\otimes (b_i\circ b_j)\circ x
\\=&(I\otimes I\otimes R_{\circ}(x))(-r_{13}\odot r_{12}+r_{12}\circ r_{23}-r_{13}\star r_{21}+r_{21}\prec r_{23}+r_{13}\circ  r_{23})
.\end{align*}
Therefore, Eq.~\eqref{Ca1} holds if and only if Eq.~\eqref{CB3} holds.
An analogous argument proves the remaining part.
\end{proof}

\begin{thm} \label{Y1} Let $(A,\succ,\prec)$ be a Leibniz-dendriform algebra.
Assume that
$\Delta_{\succ,r},\Delta_{\prec,r}$ defined by Eqs.~\eqref{CB1}-\eqref{CB2}. Then $(A,\succ,\prec,\Delta_{\succ,r},\Delta_{\prec,r})$
 is a Leibniz-dendriform bialgebra if and only if Eqs.~\eqref{CB3}-\eqref{CB8} hold.\end{thm}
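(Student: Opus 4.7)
The plan is to reduce the theorem to Proposition~\ref{DB4} together with a direct verification that the remaining compatibility conditions are automatic. By Definition~\ref{DB2}, the quintuple $(A,\succ,\prec,\Delta_{\succ,r},\Delta_{\prec,r})$ is a Leibniz-dendriform bialgebra precisely when (i) $(A,\Delta_{\succ,r},\Delta_{\prec,r})$ satisfies the coalgebra identities \eqref{Ca1}--\eqref{Ca3}, and (ii) the compatibility identities \eqref{B1}--\eqref{B6} are fulfilled. Since $(A,\succ,\prec)$ is a Leibniz-dendriform algebra by hypothesis, no additional algebraic axioms need to be checked.

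First, I would invoke Proposition~\ref{DB4} to translate six of the nine conditions into the corresponding equations in the statement: \eqref{Ca1} $\Longleftrightarrow$ \eqref{CB3}, \eqref{Ca2} $\Longleftrightarrow$ \eqref{CB4}, \eqref{Ca3} $\Longleftrightarrow$ \eqref{CB5}, \eqref{B1} $\Longleftrightarrow$ \eqref{CB6}, \eqref{B2} $\Longleftrightarrow$ \eqref{CB7}, and \eqref{B3} $\Longleftrightarrow$ \eqref{CB8}. Each of these equivalences is obtained by substituting the coboundary formulas \eqref{CB1}--\eqref{CB2} for $\Delta_{\succ,r}$ and $\Delta_{\prec,r}$ into the relevant identity, expanding everything in terms of $r=\sum_i a_i\otimes b_i$, and collecting tensor legs; Proposition~\ref{DB4} has already carried this out.

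The remaining task is to show that Eqs.~\eqref{B4}, \eqref{B5} and \eqref{B6} are automatically satisfied under the coboundary ansatz, independent of any property of $r$. The excerpt already asserts this as a parenthetical remark, so the plan is to provide the short computation: expand each of $\Delta_{\succ,r}(x\circ y)$, $\tau\Delta_{\prec,r}(x\circ y)$, and the various $(L_?\otimes I)\Delta_{?,r}$, $(I\otimes L_?)\Delta_{?,r}$, $(I\otimes R_?)\Delta_{?,r}$ terms via \eqref{CB1}--\eqref{CB2}, and verify that after using only the defining identities \eqref{Ld1}--\eqref{Ld3} of the Leibniz-dendriform algebra $(A,\succ,\prec)$ (and the derived identities \eqref{Ld4}--\eqref{Ld5}), all terms cancel in pairs. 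This reduction relies crucially on the fact that the operators $L_\odot, L_\star, R_\circ, R_\prec$ built from the Leibniz-dendriform structure satisfy certain commutation-type relations that mirror \eqref{Ld4}--\eqref{Ld5}.

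The main obstacle will be the bookkeeping of this last verification: although conceptually routine, the equations \eqref{B4}--\eqref{B6} involve many operator compositions and tensor-leg shuffles, and one must be systematic about which of \eqref{Ld1}--\eqref{Ld5} is used for each cancellation. Once this is done, combining the six equivalences from Proposition~\ref{DB4} with the automatic vanishing of \eqref{B4}--\eqref{B6} yields exactly the claimed biconditional between \eqref{CB3}--\eqref{CB8} and the Leibniz-dendriform bialgebra axioms for $(A,\succ,\prec,\Delta_{\succ,r},\Delta_{\prec,r})$, completing the proof.
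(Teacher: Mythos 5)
Your proposal is correct and follows essentially the same route as the paper: the paper's proof simply cites Definitions~\ref{DB1} and~\ref{DB2} together with Proposition~\ref{DB4} for the six equivalences \eqref{Ca1}--\eqref{Ca3}, \eqref{B1}--\eqref{B3} $\Longleftrightarrow$ \eqref{CB3}--\eqref{CB8}, relying on the earlier remark that \eqref{B4}--\eqref{B6} hold automatically under the coboundary ansatz. Your plan to actually write out that last verification is a welcome addition, since the paper leaves it as an unproved assertion.
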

\begin{proof} This follows from Definition \ref{DB1}, Definition \ref{DB2} and Proposition \ref{DB4}.\end{proof}

Given the complex nature of the compatibility conditions in Eqs.~\eqref{CB3}–\eqref{CB8},
we focus on a simpler special case where
$r\in A \otimes A$
is skew-symmetric. Under this assumption, the following result can be readily established.

\begin{pro} \label{Y2} Assume that $r\in A\otimes A$ is skew-symmetric. Then
\begin{enumerate}
		\item Eqs.~\eqref{CB6}-\eqref{CB8} hold automatically.
\item Eq.~\eqref{CB3} holds if and only if
\begin{align*}(L_{\odot}(x)\otimes I \otimes I)S(r)-(I\otimes L_{\odot}(x)\otimes I)S_{1}(r) +(I\otimes I\otimes R_{\circ}(x))S_{1}(r) =0.\end{align*}

\item Eq.~\eqref{CB4} holds if and only if
\begin{align*}
(I\otimes I \otimes R_{\prec}(x))S_{4}(r) +(L_{\odot}(x)\otimes I \otimes I)S_{5}(r)
-(I\otimes L_{\star}(x)\otimes I)S_{4}(r) =0.\end{align*}
\item Eq.~\eqref{CB5} holds if and only if
\begin{align*}(L_{\star}(x)\otimes I \otimes I)S_{3}(r)
-(I\otimes L_{\odot}(x)\otimes I)S_{2}(r)+(I\otimes I\otimes R_{\prec}(x))
S_{2}(r)=0.\end{align*}
\end{enumerate}
where \begin{align*}&S(r)=r_{23}\circ r_{13}-r_{12}\odot r_{23}-r_{12}\succ r_{13},\ \ \
S_{1}(r)=r_{12}\odot r_{13}+r_{13}\circ r_{23}+r_{12}\succ r_{23},\\&
S_{2}(r)=r_{12}\star r_{13}-r_{12}\prec r_{23}+r_{13}\prec r_{23},\ \ \
S_{3}(r)=r_{23}\odot r_{12}+r_{23}\succ r_{13}-r_{12}\circ r_{13},\\&
S_{4}(r) =-r_{13}\odot r_{12}+r_{12}\circ r_{23}+r_{13}\succ r_{23},\ \ \
S_{5}(r) =r_{23}\prec r_{13}+r_{12}\prec r_{13}-r_{12}\star r_{23}.
\end{align*}
\end{pro}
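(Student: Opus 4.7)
The strategy exploits skew-symmetry of $r$ in two complementary ways: the factor $r+\tau(r)$ vanishes identically, killing several entire summands, and the bilinearity of the position-products defined in the Notations block allows the substitutions $r_{21}\leftrightarrow -r_{12}$, $r_{31}\leftrightarrow -r_{13}$, $r_{32}\leftrightarrow -r_{23}$ to be performed in any of the twelve expressions $r_{ij}\ast r_{kl}$, even simultaneously in both tensor arguments.

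For item (a), each term in Eqs.~\eqref{CB6}--\eqref{CB8} carries $(r+\tau(r))$ as a factor and hence vanishes automatically. The same observation removes the final summation in each of Eqs.~\eqref{CB3}--\eqref{CB5}, so items (b), (c), and (d) reduce to analyzing only the remaining three lines of those equations.

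For items (b)--(d) the plan is to apply the substitution rule term by term. For example, the first line of \eqref{CB3} contains $r_{21}\succ r_{13}$, which by bilinearity equals $-r_{12}\succ r_{13}$, producing exactly $S(r)$; the second line becomes $-S_1(r)$ after the analogous substitution on $r_{21}\odot r_{13}$. The third line requires the additional pointwise identity
\begin{equation*}
-r_{13}\odot r_{12}+r_{13}\star r_{12}=r_{12}\odot r_{13},
\end{equation*}
obtained by unfolding $x\odot y=x\succ y+y\prec x$, $x\star y=x\circ y+y\circ x$, $x\circ y=x\succ y+x\prec y$, and relabeling the dummy summation indices $i\leftrightarrow j$; after one more use of $\circ-\prec=\succ$ the line collapses to $S_1(r)$. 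Items (c) and (d) follow by the same procedure, using the companion identities
\begin{equation*}
-r_{12}\odot r_{23}+r_{12}\star r_{23}=r_{23}\odot r_{12},\qquad -r_{12}\odot r_{13}+r_{12}\star r_{13}=r_{13}\odot r_{12},
\end{equation*}
to handle the lines in which a mixed $\odot$/$\star$ cancellation appears.

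The main obstacle is purely notational bookkeeping: one must track carefully in which slot each $r_{ij}\ast r_{kl}$ performs its product, and correctly combine signs when two substitutions occur simultaneously (for instance $r_{31}\star r_{21}=(-r_{13})\star(-r_{12})=r_{13}\star r_{12}$). Once this bookkeeping is carried out, no identity beyond skew-symmetry, bilinearity, and the defining decompositions of $\circ$, $\odot$, and $\star$ is needed, and the proposition follows from Theorem~\ref{Y1} together with the direct line-by-line simplifications sketched above.
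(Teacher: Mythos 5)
Your proposal is correct and follows exactly the computation the paper has in mind (the paper omits the details, merely asserting the result "can be readily established" from skew-symmetry): the vanishing of $r+\tau(r)$ kills the residual summands and Eqs.~\eqref{CB6}--\eqref{CB8}, the substitutions $r_{21}=-r_{12}$, $r_{31}=-r_{13}$, $r_{32}=-r_{23}$ convert each remaining line into $\pm S_i(r)$, and your key identity $x\star y-x\odot y=y\odot x$ together with index relabeling correctly handles the mixed $\odot/\star$ terms. I verified the line-by-line reductions for all three of \eqref{CB3}--\eqref{CB5} and they match the stated forms, so the argument is complete.
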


The following statement is clearly.
\begin{rmk} \label{Y3} Assume that $\sigma_{12},\sigma_{13},\sigma_{23},\sigma_{132}:A\otimes A\otimes A\longrightarrow A\otimes A\otimes A$
are maps defined respectively by
\begin{align*}&\sigma_{12}(x\otimes y\otimes z)=y\otimes x\otimes z, \ \ \ \sigma_{13}(x\otimes y\otimes z)=z\otimes y\otimes x,
\\& \sigma_{132}(x\otimes y\otimes z)=z\otimes x\otimes y,~~\forall~x,y,z\in A.
 \end{align*}
If $r$ is skew-symmetric, then
\begin{align*}&S_{1}(r)=\sigma_{12}S(r),\ \ \  S_{2}(r)=S_{1}(r)-S_{4}(r),\ \ \ S_{4}(r)=-\sigma_{132}S(r),\\
&S_{3}(r)=-\sigma_{13}S(r), \ \ \ S_{5}(r)=S(r)-S_{3}(r).\end{align*}
\end{rmk}

\begin{thm} \label{BY}
Let $\Delta_{\succ,r},\Delta_{\prec,r}$ be linear maps given by Eqs.~\eqref{CB1}-\eqref{CB2} respectively
with $r\in A\otimes A$ being skew-symmetric.
Then $(A,\succ,\prec,\Delta_{\succ,r},\Delta_{\prec,r})$ is a Leibniz-dendriform bialgebra if and only if the
following equation holds:
\begin{equation} \label{YE1}S(r)=r_{23}\circ r_{13}-r_{12}\odot r_{23}-r_{12}\succ r_{13}=0.\end{equation}
\end{thm}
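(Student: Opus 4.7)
My plan is to deduce Theorem \ref{BY} almost entirely from the three preceding results: Theorem \ref{Y1}, Proposition \ref{Y2}, and Remark \ref{Y3}. By Theorem \ref{Y1}, the claim that $(A,\succ,\prec,\Delta_{\succ,r},\Delta_{\prec,r})$ is a Leibniz-dendriform bialgebra is equivalent to the six identities \eqref{CB3}--\eqref{CB8}. The skew-symmetry hypothesis on $r$ means $r + \tau(r) = 0$, so Proposition \ref{Y2}(1) already delivers \eqref{CB6}, \eqref{CB7}, and \eqref{CB8} for free, since each is an operator applied to $r+\tau(r)$. Hence the content of the theorem reduces to verifying \eqref{CB3}--\eqref{CB5}.

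Next, I would invoke Proposition \ref{Y2}(2)--(4), which rewrites each of these three remaining identities as a linear combination of operator actions on the auxiliary tensors $S(r), S_{1}(r), \ldots, S_{5}(r)$. Remark \ref{Y3} is the decisive input: under skew-symmetry of $r$, each $S_{i}(r)$ is obtained from $S(r)$ by a fixed permutation of the three tensor factors (or by a difference of two such permutations). In particular, $S(r) = 0$ forces $S_{1}(r) = S_{2}(r) = S_{3}(r) = S_{4}(r) = S_{5}(r) = 0$ simultaneously.

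For the forward (``if'') direction, assume $S(r) = 0$; then by Remark \ref{Y3} every $S_{i}(r)$ vanishes, so each of the three reduced conditions in Proposition \ref{Y2}(2)--(4) collapses to $0=0$. Combined with \eqref{CB6}--\eqref{CB8}, this verifies the full list \eqref{CB3}--\eqref{CB8}, and Theorem \ref{Y1} then yields the bialgebra structure. For the converse, one runs the reduction in reverse: the bialgebra assumption produces the three coupled identities of Proposition \ref{Y2}(2)--(4), and substituting the permutation identities of Remark \ref{Y3} expresses them purely in terms of $S(r)$, from which $S(r) = 0$ is extracted.

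The main obstacle I anticipate is the converse direction: verifying that the three coupled operator identities, quantified over all $x \in A$, genuinely force $S(r) = 0$ and do not merely confine $S(r)$ to the joint kernel of the family of operators $L_{\odot}(x)\otimes I \otimes I$, $I\otimes L_{\odot}(x)\otimes I$, $I\otimes I\otimes R_{\circ}(x)$, etc. The cleanest route, once the three equations are rewritten via Remark \ref{Y3} as constraints on the single unknown $S(r)$, is to vary $x$ over a spanning set of $A$ and isolate each tensor slot of $S(r)$ in turn, thereby reducing the question to showing that the resulting linear system has only the trivial solution.
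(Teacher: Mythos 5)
Your proposal follows exactly the paper's own (one-line) argument: the paper proves Theorem~\ref{BY} simply by citing the combination of Theorem~\ref{Y1}, Proposition~\ref{Y2} and Remark~\ref{Y3}, which is precisely the reduction you describe. The subtlety you flag in the converse direction --- that the operator identities quantified over $x$ a priori only place $S(r)$ in a joint kernel rather than forcing $S(r)=0$ --- is not addressed in the paper's proof either, so on that point your write-up is, if anything, more candid than the original.
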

\begin{proof}
The conclusion is established by combining Theorem \ref{Y1}, Proposition \ref{Y2} and Remark \ref{Y3}.
\end{proof}

\begin{defi} Let $(A,\succ,\prec)$ be a Leibniz-dendriform algebra and
$r\in A\otimes A$. Eq.~\eqref{YE1} is called the {\bf Leibniz-dendriform Yang-Baxter equation} or {\bf LD-YBE} in short.
\end{defi}

\subsection{$\mathcal O$-operators and Leibniz-quadri-algebras }
In this section, we introduce the notions of Leibniz-quadri-algebras and $\mathcal{O}$-operators on Leibniz-dendriform algebras.
 We demonstrate that an $\mathcal{O}$-operator on a Leibniz-dendriform algebra gives rise to a Leibniz-quadri-algebra,
and conversely, every Leibniz-quadri-algebra naturally induces an $\mathcal{O}$-operator on its
underlying sub-adjacent Leibniz-dendriform algebra. As an application, we employ $\mathcal{O}$-operators and
 Leibniz-quadri-algebras to construct skew-symmetric solutions of the LD-YBE.

For a vector space $A$, the isomorphism $A\otimes A^{*}\simeq Hom (A^{*},A)$ identifies an element $r\in A\otimes A$ with a map
$T_{r}:A^{*}\longrightarrow A$. Explicitly,
\begin{equation} \label{YE2} T_{r}:A^{*}\longrightarrow A,\ \ \  \langle T_{r}(\zeta),\eta\rangle=\langle r,\zeta\otimes\eta\rangle,
\ \ \ \forall~\zeta,\eta\in A^{*}.\end{equation}
It is clear that $T_{r}^{*}=T_{\tau(r)},~T_{r+\tau(r)}^{*}=T_{r+\tau(r)}$.

\begin{pro} \label{Y4} Let $(A,\succ,\prec)$ be a Leibniz-dendriform
 algebra and
$r=\sum_{i}a_i\otimes b_i\in A\otimes A$. Then the following conclusions hold:
\begin{enumerate}
		\item $r$ is a solution of the LD-YBE: $S(r)=r_{23}\circ r_{13}-r_{12}\odot r_{23}-r_{12}\succ r_{13}=0$
if and only if
\begin{equation*}T_{\tau(r)}(\zeta)\succ T_{\tau(r)}(\eta)=-T_{\tau(r)}(L_{\circ}^*(T_{r}(\zeta))\eta-R_{\odot}^*(T_{\tau(r)}(\eta))\zeta).\end{equation*}
\item $r$ is a solution of the LD-YBE: $S(r)=r_{23}\circ r_{13}-r_{12}\odot r_{23}-r_{12}\succ r_{13}=0$
if and only if
\begin{equation*}T_{r}(\eta)\circ T_{r}(\zeta)=-T_{r}(L_{\odot}^*(T_{r}(\zeta))\eta+L_{\succ}^*(T_{\tau(r)}(\eta))\zeta).\end{equation*}
\item $r$ is a solution of the equation:
$S_{2}(r)=r_{12}\star r_{13}-r_{12}\prec r_{23}+r_{13}\prec r_{23}=0$
if and only if
\begin{equation*}T_{r}(\zeta)\prec T_{r}(\eta)=T_{r}(-L_{\prec}^*(T_{r}(\zeta))\eta+L_{\star}^*(T_{\tau(r)}(\eta))\zeta).\end{equation*}
\item $r$ is a solution of the equation
$S_{3}(r)=r_{23}\odot r_{12}+r_{23}\succ r_{13}-r_{12}\circ r_{13}=0$
if and only if
\begin{equation*}T_{r}(\zeta)\succ T_{r}(\eta)=T_{r}(R_{\odot}^*(T_{r}(\eta))\zeta-L_{\circ}^*(T_{\tau(r)}(\zeta))\eta).\end{equation*}
\item $r$ is a solution of the equation
$S_{3}(r)=r_{23}\odot r_{12}+r_{23}\succ r_{13}-r_{12}\circ r_{13}=0$
if and only if
\begin{equation*}T_{\tau(r)}(\zeta)\circ T_{\tau(r)}(\eta)=-T_{\tau(r)}(L_{\succ}^*(T_{r}(\zeta))\eta+L_{\odot}^*(T_{\tau(r)}(\eta))\zeta).\end{equation*}
\item $r$ is a solution of the equation:
$S_{5}(r) =r_{23}\prec r_{13}+r_{12}\prec r_{13}-r_{12}\star r_{23}=0$
if and only if
\begin{equation*}T_{\tau(r)}(\zeta)\prec T_{\tau(r)}(\eta)=T_{\tau(r)}(L_{\prec}^*(T_{r}(\zeta))\eta-L_{\star}^*(T_{\tau(r)}(\eta))\zeta).\end{equation*}
\end{enumerate}
\end{pro}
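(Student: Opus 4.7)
The plan is to deduce each of the six equivalences by the same dualisation argument: each tensor equation $S_k(r)=0$ in $A\otimes A\otimes A$ is paired with an arbitrary element of $(A^{*})^{\otimes 3}$ and shown to be equivalent to the pairing of the proposed operator identity with a single covector. Because the natural pairing $A^{\otimes 3}\times(A^{*})^{\otimes 3}\to k$ is non-degenerate, the tensor identity holds if and only if the operator identity does.

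First I would record the explicit formulas that are used throughout. For $r=\sum_i a_i\otimes b_i$, Eq.~\eqref{YE2} gives
\begin{equation*}
T_{r}(\zeta)=\sum_i\langle a_i,\zeta\rangle\,b_i,\qquad T_{\tau(r)}(\zeta)=\sum_i\langle b_i,\zeta\rangle\,a_i,
\end{equation*}
and for any binary operation $\ast$ on $A$ and $x\in A$,
\begin{equation*}
\langle v,L_{\ast}^{*}(x)\zeta\rangle=-\langle x\ast v,\zeta\rangle,\qquad \langle v,R_{\ast}^{*}(x)\zeta\rangle=-\langle v\ast x,\zeta\rangle.
\end{equation*}
Then, for a given item, I would fix $\zeta,\eta\in A^{*}$, expand each occurrence of $T_{r}(\cdot),\,T_{\tau(r)}(\cdot),\,L_{\ast}^{*}(\cdot)\cdot,\,R_{\ast}^{*}(\cdot)\cdot$ in the operator identity as a double sum over the indices of $r$, and pair the resulting element of $A$ with a third covector $\xi\in A^{*}$. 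Each of the three operator terms then becomes a triple sum of the form $\sum_{i,j}(\text{scalars involving }\zeta,\eta,\xi)\,\langle a_i\ast a_j\text{ or }b_i\ast b_j,\text{one of }\zeta,\eta,\xi\rangle$, which matches termwise with $\langle S_k(r),\xi\otimes\eta\otimes\zeta\rangle$ (up to a permutation of slots dictated by the listing in the notation paragraph).

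The main obstacle is purely combinatorial bookkeeping. The twelve three-fold products $r_{ij}\ast r_{kl}$ carry delicate slot-permutation conventions (for example, the distinction between $r_{13}\ast r_{23}$ and $r_{23}\ast r_{13}$), and the signs introduced by $L_{\ast}^{*}$ and $R_{\ast}^{*}$ together with the composite operators $L_{\odot}=L_{\succ}+R_{\prec}$, $R_{\odot}=R_{\succ}+L_{\prec}$ and $L_{\star}=L_{\circ}+R_{\circ}$ must all be tracked carefully. The fact that items (1)--(2) yield two equivalent operator reformulations of $S(r)=0$, and similarly items (4)--(5) of $S_{3}(r)=0$, reflects a genuine freedom in the dualisation: a three-tensor can be paired with a covector in any one of its three slots, and different choices produce distinct but equivalent operator equations in which $T_r$ or $T_{\tau(r)}$ appears in different arguments. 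Items (3) and (6) require only a single dualisation of $S_{2}(r)=0$ and $S_{5}(r)=0$ respectively and follow from the same mechanism without any additional structural input.
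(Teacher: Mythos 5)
Your proposal is correct and follows essentially the same route as the paper: the paper also proves item (a) by pairing $S(r)$ against $\theta\otimes\zeta\otimes\eta$, identifying $\langle\theta\otimes\zeta\otimes\eta,r_{12}\succ r_{13}\rangle$ with $\langle T_{\tau(r)}(\zeta)\succ T_{\tau(r)}(\eta),\theta\rangle$ and the other two products with the corresponding $T_{\tau(r)}(L_{\circ}^{*}(\cdot)\cdot)$ and $T_{\tau(r)}(R_{\odot}^{*}(\cdot)\cdot)$ terms, then invoking non-degeneracy of the pairing and declaring the remaining items analogous. Your observation that items (1)--(2) and (4)--(5) arise from choosing different free slots in the same three-tensor is the right explanation for the duplicate reformulations, and the explicit formulas you record for $T_{r}$, $T_{\tau(r)}$, $L_{\ast}^{*}$ and $R_{\ast}^{*}$ match the paper's conventions.
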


\begin{proof} According to Eq.~\eqref{YE2}, for all $\zeta,\eta,\theta\in A^{*}$, we have
\begin{align*}\langle\theta\otimes \zeta\otimes \eta,r_{12}\succ r_{13}\rangle
&=\sum_{i,j}\langle \theta\otimes \zeta\otimes \eta,a_i\succ a_j\otimes b_i\otimes b_j\rangle
\\&=\sum_{i,j}\langle \theta,a_i\succ a_j\rangle\langle \zeta,b_i\rangle \langle \eta,b_j\rangle
=\langle T_{\tau(r)}(\zeta)\succ T_{\tau(r)}(\eta),\theta \rangle
,\end{align*}
\begin{align*}\langle \theta\otimes \zeta\otimes \eta,r_{23}\circ r_{13}\rangle
&=\sum_{i,j}\langle \theta\otimes \zeta\otimes \eta,a_i \otimes a_j\otimes (b_j\circ b_i)\rangle
\\&=\sum_{i,j}\langle \zeta,a_j\rangle \langle \theta,a_i\rangle\langle \eta,b_j\circ b_i\rangle
=\langle \eta,T_{r}(\zeta) \circ T_{r}(\theta)\rangle
\\&=-\langle L_{\circ}^{*}(T_{r}(\zeta))\eta, T_{r}(\theta)\rangle
=-\langle T_{\tau(r)}(L_{\circ}^{*}(T_{r}(\zeta))\eta), \theta\rangle,\end{align*}
 \begin{align*}\langle \theta\otimes \zeta\otimes \eta,r_{12}\odot r_{23}\rangle
 &=\sum_{i,j}\langle \theta\otimes \zeta\otimes \eta,a_i \otimes (b_i\odot a_j)\otimes b_j\rangle
 \\& =\sum_{i,j}\langle \theta,a_i\rangle\langle \eta,b_j\rangle \langle \zeta,b_i\odot a_j\rangle
 =\langle \zeta,T_{r}(\theta)\odot T_{\tau(r)}(\eta)\rangle
  \\&=-\langle R_{\odot}^{*}(T_{\tau(r)}(\eta))\zeta,T_{r}(\theta)\rangle
 =-\langle T_{\tau(r)}(R_{\odot}^{*}(T_{\tau(r)}(\eta))\zeta),\theta\rangle.\end{align*}
Item (a) is thus verified, and the other items follow analogously.
\end{proof}

Recall that an $\mathcal O$-operator $T$ on a Leibniz
algebra $(A,\circ)$ associated to a representation $(V,l,r)$ is a linear map $T:V\longrightarrow A$ satisfying
$T(u)\circ T(v)=T (l(T(u))v+r(T(v))u)$ for all $u,v\in V$.

 \begin{defi} Let $(A,\succ,\prec)$ be a Leibniz-dendriform algebra and $(V,l_{\succ},r_{\succ},l_{\prec},r_{\prec})$ be
 its representation.
  An $\mathcal O$-operator $T$ on $(A,\succ,\prec)$ associated to $(V,l_{\succ},r_{\succ},l_{\prec},r_{\prec})$
   is a linear map $T:V\longrightarrow A$ satisfying
\begin{equation*}T(u)\succ T(v)=T (l_{\succ}(T(u))v+r_{\succ}(T(v))u),\ \ \
T(u)\prec T(v)=T (l_{\prec}(T(u))v+r_{\prec}(T(v))u),~~\forall~u,v\in V.\end{equation*}
In particular, an $\mathcal{O}$-operator $P$ on $(A,\circ )$ associated to
the representation $(A,L_{\succ},R_{\succ},L_{\prec},R_{\prec})$ is called a Rota-Baxter operator, that is, $P:A\longrightarrow A$
is a linear map satisfying
\begin{equation}
P(x)\succ P(y)=P(P(x)\succ y)+x\succ P(y)),\\ \ P(x)\prec P(y)=P(P(x)\prec y)+x\prec P(y)). \end{equation}
More generally, a linear map $P:A\longrightarrow A$ is called a Rota-Baxter of weight $\lambda$ on a Leibniz-dendriform algebra
$(A,\succ,\prec)$
if
\begin{align*} &P (x) \prec P(y)=P(P(x)\prec y+x\prec P(y)+\lambda x\prec y), \\
&P (x) \succ P(y)=P(P(x)\succ y+x\succ P(y)+\lambda x\succ y),\end{align*}
for all $x, y, z \in A$.
$(A,\succ,\prec,P)$ is called a Rota-Baxter Leibniz-dendriform algebra of weight $\lambda$.\end{defi}

Direct calculation shows that
$(A,\succ,\prec,P)$ is a Rota-Baxter Leibniz-dendriform algebra of weight $\lambda$
if and only if $(A,\succ,\prec,\tilde{P})$ is, where $\tilde{P}=-\lambda I -P$.
	
\begin{ex} \label{Eo} Let $(A,\succ,\prec)$ be the 2-dimensional Leibniz-dendriform algebra given in Example \ref{E2}
with a basis $\{e_1, e_2\}$. Define a linear map $T:A\longrightarrow A$ by a matrix
 $\begin {bmatrix}
a&b\\
c&d
\end {bmatrix}$
with respect to the basis $\{e_1, e_2 \}$, where $a,b,c,d\in \mathbb{R}$.
Then $T$ is an $\mathcal{O}$-operator on $(A,\succ,\prec)$ associated to the representation
$(A,L_{\succ},R_{\succ},L_{\prec},R_{\prec})$ if and only if
 $a=d=0, \ bc=0.$
\end{ex}

\begin{thm} \label{YE3} Let $(A,\succ,\prec)$ be a Leibniz-dendriform
 algebra,
$r=\sum_{i}a_i\otimes b_i\in A\otimes A$ be skew-symmetric and $(A,\circ)$ be the associated Leibniz
 algebra of $(A,\succ,\prec)$. Then the following conditions are equivalent:
\begin{enumerate}
		\item $r$ is a solution of the LD-YBE in $(A,\succ,\prec)$, that is,
\begin{equation*} S(r)=r_{23}\circ r_{13}-r_{12}\odot r_{23}-r_{12}\succ r_{13}=0.\end{equation*}
\item $T_{r}$ is an $\mathcal O$-operator on $(A, \circ)$ associated
to $(A^{*},L_{\succ}^{*},-L_{\odot}^{*})$.
\item $T_{r}$ is an $\mathcal O$-operator on $(A,\succ,\prec)$ associated to $(L_{\circ}^*,R_{\odot}^*,-L_{\prec}^*,-L_{\star}^*)$.
\end{enumerate}
\end{thm}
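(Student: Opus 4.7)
The plan is to derive all three equivalences from Proposition~\ref{Y4} by exploiting the skew-symmetry of $r$, which translates into the operator identity $T_{\tau(r)} = -T_r$. Once this substitution is available, the task reduces to matching signs and labels between the identities of Proposition~\ref{Y4} and the $\mathcal{O}$-operator conditions in (b) and (c), whose underlying representations are already known to be the appropriate duals from Proposition~\ref{Dr}(c) and (e).

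First I would handle $(a) \Longleftrightarrow (b)$. Proposition~\ref{Y4}(b) rewrites the LD-YBE as
\[
T_r(\eta)\circ T_r(\zeta) \;=\; -T_r\bigl(L_\odot^*(T_r(\zeta))\eta + L_\succ^*(T_{\tau(r)}(\eta))\zeta\bigr).
\]
Substituting $T_{\tau(r)} = -T_r$ and distributing the sign, the right-hand side becomes $T_r\bigl(L_\succ^*(T_r(\eta))\zeta + (-L_\odot^*)(T_r(\zeta))\eta\bigr)$, which is precisely the $\mathcal{O}$-operator condition for $T_r$ on the associated Leibniz algebra $(A,\circ)$ relative to the representation $(A^*, L_\succ^*, -L_\odot^*)$.

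Next I would establish $(a) \Longleftrightarrow (c)$. The forward direction is a two-step argument. The $\succ$-half of the $\mathcal{O}$-operator condition falls out of Proposition~\ref{Y4}(a) after applying $T_{\tau(r)} = -T_r$ and cleaning up signs, yielding $T_r(\zeta)\succ T_r(\eta) = T_r\bigl(L_\circ^*(T_r(\zeta))\eta + R_\odot^*(T_r(\eta))\zeta\bigr)$. For the $\prec$-half, I would invoke Remark~\ref{Y3}, which records the identity $S_2(r) = \sigma_{12}S(r) + \sigma_{132}S(r)$ under skew-symmetry; hence $S(r)=0$ automatically forces $S_2(r)=0$, and Proposition~\ref{Y4}(c) then produces $T_r(\zeta)\prec T_r(\eta) = T_r\bigl((-L_\prec^*)(T_r(\zeta))\eta + (-L_\star^*)(T_r(\eta))\zeta\bigr)$, which is the required $\prec$-component. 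Conversely, $(c)\Longrightarrow(a)$ is immediate: reading off only the $\succ$-component of the $\mathcal{O}$-operator identity and running Proposition~\ref{Y4}(a) in reverse already recovers $S(r)=0$.

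The hard part is not conceptual but bureaucratic: carefully tracking the correct slots when swapping between the $\zeta,\eta$ labels used in Proposition~\ref{Y4} and the generic $u,v$ of the $\mathcal{O}$-operator definition, and verifying that each sign produced by $T_{\tau(r)} = -T_r$ lands where the representations in Proposition~\ref{Dr}(c) and (e) predict. A secondary point worth flagging is that, although $(c)$ involves two separate identities (one for $\succ$ and one for $\prec$), the single scalar condition $S(r)=0$ controls both; this relies essentially on Remark~\ref{Y3}, so skew-symmetry cannot be dropped.
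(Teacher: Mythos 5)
Your proposal is correct and follows essentially the same route as the paper, whose entire proof is the one-line instruction to combine Remark~\ref{Y3} with Proposition~\ref{Y4}; you have simply carried out that combination explicitly, using $T_{\tau(r)}=-T_r$ to match Proposition~\ref{Y4}(a),(b),(c) with the two $\mathcal{O}$-operator conditions and the identity $S_2(r)=\sigma_{12}S(r)+\sigma_{132}S(r)$ to see that $S(r)=0$ controls the $\prec$-half as well. The sign bookkeeping you flag checks out, so no gap remains.
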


\begin{proof} The statement is deduced by combining Remark \ref{Y3} with Proposition \ref{Y4}.
\end{proof}

Inspired by the theory of quadri-algebras \cite{2}, we are led to introduce the notion of Leibniz-quadri-algebras.

\begin{defi} 
 A {\bf Leibniz-quadri-algebra} is a vector space $A$ together with four binary operations
$\searrow,\nearrow,\swarrow,\nwarrow: A\otimes A \rightarrow A$ satisfying
\begin{align} \label{Lq1}
&(x\circ y)\searrow z=x\searrow(y\searrow z)-y\searrow(x\searrow z),\\
\label{Lq2}&(x\vee y)\nearrow z=x\searrow(y\nearrow z)-y\nearrow (x\succ z),\\
\label{Lq3}&(x\wedge y)\nearrow z=x\nearrow(y\succ z)-y\searrow (x\nearrow z),\\
\label{Lq4}&(x\succ y)\swarrow z+y\swarrow(x\vee z)=x\searrow(y\swarrow z),\\
\label{Lq5}&(x\searrow y)\nwarrow z+y\nwarrow(x\circ z)=x\searrow(y\nwarrow z),\\
\label{Lq6}&(y\nearrow x)\nwarrow z+x\swarrow(y\wedge z)=x\nearrow(y\prec z),\\
\label{Lq7}&x\swarrow(y\vee z)=(x\prec y)\swarrow z+y\searrow(x\swarrow z),\\
\label{Lq8}&x\swarrow(y\wedge z)=(x\swarrow y)\nwarrow z+y\nearrow(x\prec z),\\
\label{Lq9}&x\nwarrow(y\circ z)=(x\nwarrow y)\nwarrow z+y\searrow(x\nwarrow z),
\end{align}
for all $x,y,z\in A$, where $x\succ y=x\nearrow y+x\searrow y,~x\prec y=x\nwarrow y+x\swarrow y,~
x\circ y=x\succ y+x\prec y,~
x\vee y=x\searrow y+x\swarrow y,~x\wedge y=x\nearrow y+x\nwarrow y$.
\end{defi}

The validity of Eq.~(\ref{Ld1}) for the pair $(\succ,\prec)$ is established by Eqs.~(\ref{Lq1})-(\ref{Lq3}). 
Likewise, Eqs.~(\ref{Lq4})-(\ref{Lq6}) establish Eq.~(\ref{Ld2}), 
and Eqs.~(\ref{Lq7})-(\ref{Lq9}) establish Eq.~(\ref{Ld3}) for this pair.
Moreover, the following combinations establish the corresponding relations for $(\vee,\wedge)$
Eq.~(\ref{Ld1}) is derived from Eqs.~(\ref{Lq1}), (\ref{Lq4}) and (\ref{Lq7}),
Eq.~(\ref{Ld2}) is derived from Eqs.~(\ref{Lq2}), (\ref{Lq5}) and (\ref{Lq8}),
Eq.~(\ref{Ld3}) is derived from Eqs.~(\ref{Lq3}), (\ref{Lq6}) and (\ref{Lq9}).
Thus, we have

\begin{rmk} \begin{enumerate}
		\item $(A,\succ,\prec)$ is a Leibniz-dendriform algebra. We denote it by $A_{h}$ and call
it the horizontal Leibniz-dendriform algebra associated to $(A,\searrow,\nearrow,\swarrow,\nwarrow)$.
\item $(A,\vee,\wedge)$ is a Leibniz-dendriform algebra. We denote it by $A_{v}$ and call
it the vertical Leibniz-dendriform algebra associated to $(A,\searrow,\nearrow,\swarrow,\nwarrow)$.
\item $(A,\circ)$ is a Leibniz algebra 
associated to both the horizontal Leibniz-dendriform algebra $(A_{h},\succ,\prec)$ and the vertical Leibniz-dendriform algebra $(A_{v},\vee,\wedge)$.
\end{enumerate}
\end{rmk}

\begin{pro} \label{Qq1}Let $A$ be a vector space. Suppose that $\searrow,\nearrow,\swarrow,\nwarrow: A\otimes A \rightarrow A$
  are binary operations. Then the following are equivalent.
  \begin{enumerate}
\item $(A,\searrow,\nearrow,\swarrow,\nwarrow)$ is a Leibniz-quadri-algebra 
\item $(A,\succ,\prec)$ is a Leibniz-dendriform algebra
and $(L_{\searrow},R_{\nearrow},L_{\swarrow},R_{\nwarrow})$ is a representation of $(A,\succ,\prec)$
\item $(A,\vee,\wedge)$ is a Leibniz-dendriform algebra
and $(L_{\searrow},R_{\swarrow},L_{\nearrow},R_{\nwarrow})$ is a representation of $(A,\vee,\wedge)$.
\end{enumerate}
\end{pro}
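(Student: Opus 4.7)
The plan is to prove both equivalences by setting up explicit bijections between the defining axiom systems, reducing the verification to direct rewriting.

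For (a)$\Longleftrightarrow$(b), I would adopt the identification $l_\succ=L_\searrow$, $r_\succ=R_\nearrow$, $l_\prec=L_\swarrow$, $r_\prec=R_\nwarrow$ acting on $V=A$. Since $\vee=\searrow+\swarrow$ and $\wedge=\nearrow+\nwarrow$, this immediately yields $l_\circ=L_\vee$ and $r_\circ=R_\wedge$. Evaluating each of the nine representation axioms \eqref{R1}--\eqref{R9} on an arbitrary element of $A$ rewrites it as an identity in the four operations $\searrow,\nearrow,\swarrow,\nwarrow$, and one checks by inspection that each such identity coincides with exactly one of \eqref{Lq1}--\eqref{Lq9} (for instance, \eqref{R1} becomes \eqref{Lq1}, \eqref{R3} becomes \eqref{Lq2}, \eqref{R2} becomes \eqref{Lq3}, and analogously for the remaining six). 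Combined with the remark preceding this proposition---which states that \eqref{Lq1}--\eqref{Lq9} already force $(A,\succ,\prec)$ to satisfy \eqref{Ld1}--\eqref{Ld3}---this yields (a)$\Longleftrightarrow$(b): one implication produces the representation, the other recovers the quadri-algebra axioms.

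For (a)$\Longleftrightarrow$(c), I would use the symmetric identification $l_\vee=L_\searrow$, $r_\vee=R_\swarrow$, $l_\wedge=L_\nearrow$, $r_\wedge=R_\nwarrow$, giving $l_\circ=L_\succ$ and $r_\circ=R_\prec$; note that $\vee+\wedge=\succ+\prec=\circ$, so both viewpoints share the same associated Leibniz algebra. Once again the representation axioms for $(A,\vee,\wedge)$ match \eqref{Lq1}--\eqref{Lq9} one-for-one after expanding the derived operations, and the remark guarantees that $(A,\vee,\wedge)$ is a Leibniz-dendriform algebra whenever \eqref{Lq1}--\eqref{Lq9} hold.

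The main obstacle is purely bookkeeping: one must carefully tabulate which of \eqref{R1}--\eqref{R9} corresponds to which of \eqref{Lq1}--\eqref{Lq9} under the two different identifications, since the ordering is not monotonic and the same (Lq) identity plays different roles in the two cases. Once the table is written down, no calculation more elaborate than expanding a sum of two among the four operations is required, so no genuine algebraic difficulty enters the argument.
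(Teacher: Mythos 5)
Your proposal is correct and takes essentially the same route as the paper, whose entire proof is the single sentence ``It can be checked by direct calculation'': the identifications $l_{\succ}=L_{\searrow}$, $r_{\succ}=R_{\nearrow}$, $l_{\prec}=L_{\swarrow}$, $r_{\prec}=R_{\nwarrow}$ (hence $l_{\circ}=L_{\vee}$, $r_{\circ}=R_{\wedge}$) for (a)$\Leftrightarrow$(b), and $l_{\vee}=L_{\searrow}$, $r_{\vee}=R_{\swarrow}$, $l_{\wedge}=L_{\nearrow}$, $r_{\wedge}=R_{\nwarrow}$ (hence $l_{\circ}=L_{\succ}$, $r_{\circ}=R_{\prec}$) for (a)$\Leftrightarrow$(c) are exactly the intended ones, and your sample matches (\ref{R1})$\leftrightarrow$(\ref{Lq1}), (\ref{R3})$\leftrightarrow$(\ref{Lq2}), (\ref{R2})$\leftrightarrow$(\ref{Lq3}) check out. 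The only caveat is on the paper's side rather than yours: carrying out your bookkeeping reveals that (\ref{Lq6}) as printed should read $(x\nearrow y)\nwarrow z+y\swarrow(x\wedge z)=x\nearrow(y\prec z)$ for the correspondence with (\ref{R5}) (and with (\ref{R9}) in case (c)) to hold verbatim.
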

\begin{proof}
It can be checked by direct calculation.
\end{proof}

\begin{ex} \label{Qq2}Let $(A,\searrow,\nearrow,\swarrow,\nwarrow)$ be a Leibniz-quadri-algebra. Then 
\begin{enumerate}
		\item The identity map
$I$ is an $\mathcal{O}$-operator $T$ on Leibniz-dendriform algebra $(A_{h},\succ,\prec)$ associated to the representation
$(A,L_{\searrow},R_{\nearrow},L_{\swarrow},R_{\nwarrow})$.
\item The identity map
$I$ is an $\mathcal{O}$-operator $T$ on Leibniz-dendriform algebra $(A_{v},\vee,\wedge)$ associated to the representation
$(A,L_{\searrow},R_{\swarrow},L_{\nearrow},R_{\nwarrow})$.
\end{enumerate}
\end{ex}

\begin{pro} Let $(A,\succ_1,\prec_1)$ and $(B,\succ_2,\prec_2)$ be two Leibniz-dendriform algebras.
Define operations $\searrow,\nearrow,\swarrow,\nwarrow: (A\otimes B)\otimes A\otimes B \rightarrow A\otimes B$ by
\begin{align*} 
&(x\otimes a)\nwarrow (y\otimes b)=(x\prec_1 y)\otimes (a\prec_2 b),\\
&(x\otimes a)\swarrow (y\otimes b)=(x\prec_1 y)\otimes (a\succ_2 b),\\
&(x\otimes a)\nearrow (y\otimes b)=(x\succ_1 y)\otimes (a\prec_2 b),\\
&(x\otimes a)\searrow (y\otimes b)=(x\succ_1 y)\otimes (a\succ_2 b)
\end{align*}
for all $x,y\in A$ and $a,b\in B$. Then $(A\otimes B,\searrow,\nearrow,\swarrow,\nwarrow)$ is a
Leibniz-quadri-algebra. The corresponding horizontal Leibniz-dendriform algebra $(A\otimes B)_{h}$ structure
$(\succ,\prec)$ is given by
\begin{align*} 
&(x\otimes a)\prec (y\otimes b)=(x\prec_1 y)\otimes (a\circ_2 b),\\
&(x\otimes a)\succ (y\otimes b)=(x\succ_1 y)\otimes (a\circ_2 b)
\end{align*}
and the corresponding vertical Leibniz-dendriform algebra $(A\otimes B)_{v}$ structure
$(\wedge,\vee)$ is given by
\begin{align*} 
&(x\otimes a)\vee (y\otimes b)=(x\circ_1 y)\otimes (a\succ_2 b),\\
&(x\otimes a)\wedge (y\otimes b)=(x\circ_1 y)\otimes (a\prec_2 b).
\end{align*}
\end{pro}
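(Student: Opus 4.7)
The plan is to apply Proposition \ref{Qq1} to reduce the statement to two more tractable conditions: (i) the horizontal product $(A\otimes B,\succ,\prec)$ is itself a Leibniz-dendriform algebra, and (ii) the quadruple $(L_{\searrow},R_{\nearrow},L_{\swarrow},R_{\nwarrow})$ is a representation of this horizontal Leibniz-dendriform algebra. This route is preferable to a head-on verification of all nine axioms \eqref{Lq1}--\eqref{Lq9}.

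First I would establish the formulas for the horizontal and vertical structures by direct expansion. From $\succ=\nearrow+\searrow$ one reads off $(x\otimes a)\succ(y\otimes b)=(x\succ_{1}y)\otimes(a\prec_{2}b+a\succ_{2}b)=(x\succ_{1}y)\otimes(a\circ_{2}b)$, and similarly for $\prec$, $\vee$, $\wedge$. Consequently the associated Leibniz product on $A\otimes B$ takes the form $(x\otimes a)\circ(y\otimes b)=(x\circ_{1}y)\otimes(a\circ_{2}b)$, which will be used repeatedly as the input to the Leibniz-dendriform axioms of both factors.

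For part (i), I would substitute the horizontal operations into the three identities \eqref{Ld1}--\eqref{Ld3} and rewrite each side as a tensor whose first factor lives in $A$ and whose second factor lives in $B$. The first factor reduces to an instance of the corresponding axiom of $(A,\succ_{1},\prec_{1})$, while the second factor reduces to an instance of the defining Leibniz identity of the associated Leibniz algebra $(B,\circ_{2})$. For part (ii), I would substitute $L_{\searrow}(x\otimes a)(y\otimes b)=(x\succ_{1}y)\otimes(a\succ_{2}b)$ and the analogous expressions for $R_{\nearrow}$, $L_{\swarrow}$, $R_{\nwarrow}$ into the representation axioms \eqref{R1}--\eqref{R9}, reducing each equation to a combination of axioms from $(A,\succ_{1},\prec_{1})$ on the left tensor factor together with axioms from $(B,\succ_{2},\prec_{2})$ or $(B,\circ_{2})$ on the right factor.

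The expected main obstacle is sheer bookkeeping: the twelve conditions to verify (three Leibniz-dendriform identities plus nine bimodule equations) each expand into a number of tensor terms, and matching the two sides requires careful tracking of how $\circ$, $\succ$, and $\prec$ act on each factor independently; in several cases one must also pass between $(B,\circ_{2})$ and $(B,\succ_{2},\prec_{2})$ to choose the form of the identity that produces the right decomposition. As a consistency check I would also execute the vertical route, verifying that $(A\otimes B,\vee,\wedge)$ together with $(L_{\searrow},R_{\swarrow},L_{\nearrow},R_{\nwarrow})$ is a representation of itself; by Proposition \ref{Qq1} both routes must produce the identical Leibniz-quadri-algebra, and any asymmetry in the bookkeeping would signal a computational error.
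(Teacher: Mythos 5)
Your reduction via Proposition \ref{Qq1} is a legitimate reorganization, and genuinely different in shape from the paper's proof, which verifies the nine axioms \eqref{Lq1}--\eqref{Lq9} head-on (displaying the computation only for \eqref{Lq1} and declaring the rest analogous). Your formulas for the horizontal, vertical and associated products are also correct. However, your route does not avoid the step on which everything hinges, and that step, as you describe it, does not go through: you claim that each identity splits so that "the first factor reduces to an instance of the corresponding axiom of $(A,\succ_1,\prec_1)$, while the second factor reduces to an instance of the defining Leibniz identity of $(B,\circ_2)$." A tensor identity cannot be checked factor by factor when the identities being invoked are multi-term. Concretely, already in part (i) of your plan, axiom \eqref{Ld1} for $(A\otimes B,\succ,\prec)$ requires
\begin{align*}
\bigl((x\circ_1 y)\succ_1 z\bigr)\otimes\bigl((a\circ_2 b)\circ_2 c\bigr)
= \bigl(x\succ_1(y\succ_1 z)\bigr)\otimes\bigl(a\circ_2(b\circ_2 c)\bigr)
-\bigl(y\succ_1(x\succ_1 z)\bigr)\otimes\bigl(b\circ_2(a\circ_2 c)\bigr).
\end{align*}
Writing $P-Q$ for the first factor on the left (which is what \eqref{Ld1} in $A$ gives) and $R-S$ for the second (the Leibniz identity in $B$), the left side is $(P-Q)\otimes(R-S)=P\otimes R-P\otimes S-Q\otimes R+Q\otimes S$, while the right side is $P\otimes R-Q\otimes S$. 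The cross terms $-(P-Q)\otimes S-Q\otimes(R-S)$ do not cancel on the strength of the axioms alone; their vanishing is an additional condition (it would hold, for instance, if $(x\circ_1 y)\succ_1 z$ and $(a\circ_2 b)\circ_2 c$ vanished identically, which the axioms do not guarantee --- \eqref{Ld4} only kills the symmetrized products). The same obstruction recurs in each of the twelve conditions of your plan; it is the very reason the tensor product of two Leibniz algebras, with product $(x\circ_1 y)\otimes(a\circ_2 b)$, is not in general a Leibniz algebra, even though the tensor product of two associative algebras is associative.

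In fairness, the paper's own displayed verification of \eqref{Lq1} makes exactly the same leap ("Combining Eq.~\eqref{Ld1}, we get that Eq.~\eqref{Lq1} hold"), so your proposal is no less rigorous than the source; but as written neither argument is complete. A repair must either establish the extra identities that kill the cross terms, or restrict the hypotheses so that every identity being tensored is of the form monomial $=$ monomial. Note also that your proposed consistency check via the vertical route would not detect the problem, since both routes break down at the identical point.
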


\begin{proof} For all $x,y,z\in A$ and $a,b,\in B$, we have 
\begin{align*}&((x\otimes a)\circ (y\otimes b))\searrow (z\otimes c)=(x\circ_1 y)\otimes (a\circ_2 b)\searrow (z\otimes c)
\\=&((x\circ_1 y)\succ_1 z)\otimes ((a\circ_2 b)\succ_2 c),\\
& (x\otimes a)\searrow((y\otimes b)\searrow (z\otimes c))-(y\otimes b)\searrow((x\otimes a)\searrow (z\otimes c))
\\=&(x\otimes a)\searrow ((y\succ_1 z)\otimes (b\succ_2 c))-(y\otimes b)\searrow ((x\succ_1 z)\otimes (a\succ_2 c))
\\=&(x\succ_1(y\succ_1 z))\otimes (a\succ_2(b\succ_2 c))-(y\succ_1(x\succ_1 z))\otimes (b\succ_2(a\succ_2 c)).
\end{align*}
Combining Eq.~(\ref{Ld1}), we get that Eq.~(\ref{Lq1}) hold. Analogously,
Eqs.~(\ref{Lq2})-(\ref{Lq9}) hold.
\end{proof}

\begin{pro}\label{Lq} Let $(A,\succ,\prec)$ be a Leibniz-dendriform algebra and $T:V\longrightarrow A$ an $\mathcal O$-operator
on $(A,\succ,\prec)$ associated to the representation $(V,l_{\succ},r_{\succ},l_{\prec},r_{\prec})$.
Define four operations $\searrow_{T},\nearrow_{T},\swarrow_{T},\nwarrow_{T}: V\otimes V \rightarrow V$ by
\begin{align*} 
&u\searrow_{T} v=l_{\succ}(T(u))v,\ \  \ u\nearrow_{T} v=r_{\succ}(T(v))u,\\
&u\swarrow_{T} v=l_{\prec}(T(u))v,\ \  \ u\nwarrow_{T} v=r_{\prec}(T(v))u
\end{align*}
for all $u,v\in V$. Then $(V,\searrow_{T},\nearrow_{T},\swarrow_{T},\nwarrow_{T})$ is a
Leibniz-quadri-algebra. The corresponding horizontal Leibniz-dendriform algebra structure
$(\succ_{T},\prec_{T})$ is given by 
\begin{align*} 
u\prec_{T} v=l_{\prec}(T(u))v+r_{\prec}(T(v))u, \ \  \ u\succ_{T} v=l_{\succ}(T(u))v+r_{\succ}(T(v))u.
\end{align*}
and the corresponding vertical Leibniz-dendriform algebra structure
$(\vee_{T},\wedge_{T})$ is given by 
\begin{align*} 
u\vee_{T} v=l_{\circ}(T(u))v, \ \  \ u\wedge_{T} v=r_{\circ}(T(v))u.
\end{align*}
Moreover, $T$ is both a Leibniz-dendriform algebra homomorphism from $(V_{h},\succ_{T},\prec_{T})$ to $(A,\succ,\prec)$
and a Leibniz algebra homomorphism from $(V,\circ_{T})$ to $(A,\circ)$.
\end{pro}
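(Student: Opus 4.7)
The plan is to leverage Proposition~\ref{Qq1}, which reduces verifying the Leibniz-quadri-algebra axioms to checking a representation condition on an underlying Leibniz-dendriform structure. With the four operations defined as in the statement, the structural maps satisfy
\[
L_{\searrow_{T}} = l_{\succ}\circ T,\quad R_{\nearrow_{T}} = r_{\succ}\circ T,\quad L_{\swarrow_{T}} = l_{\prec}\circ T,\quad R_{\nwarrow_{T}} = r_{\prec}\circ T.
\]
So it suffices to establish (i) that $(V,\succ_{T},\prec_{T})$ with the prescribed operations is a Leibniz-dendriform algebra, and (ii) that $(V,l_{\succ}\circ T, r_{\succ}\circ T, l_{\prec}\circ T, r_{\prec}\circ T)$ is a representation of $(V,\succ_{T},\prec_{T})$.

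First I would record the homomorphism identities $T(u\succ_{T} v)=T(u)\succ T(v)$ and $T(u\prec_{T} v)=T(u)\prec T(v)$, which are exactly the two defining conditions of an $\mathcal{O}$-operator. Summing them yields $T(u\circ_{T} v)=T(u)\circ T(v)$, giving the homomorphism claims for free once (i) is in hand. Next I would verify (i): each of Eqs.~(\ref{Ld1})--(\ref{Ld3}) for $(\succ_{T},\prec_{T})$ expands into terms of the form $l_{\bullet}(T(u))l_{\bullet}(T(v))w$, $r_{\bullet}(T(v\circ_{T}w))u$, and so on. Whenever an expression $T(u\bullet_{T}v)$ appears inside $l_{\bullet}$ or $r_{\bullet}$, one replaces it by $T(u)\bullet T(v)$ via the $\mathcal{O}$-operator property, after which the representation axioms~(\ref{R1})--(\ref{R9}) for $(V,l_{\succ},r_{\succ},l_{\prec},r_{\prec})$ over $(A,\succ,\prec)$ collapse the two sides to equality.

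The verification of (ii) is the main obstacle. Here each of Eqs.~(\ref{R1})--(\ref{R9}) with the composed operators $l_{\bullet}\circ T$ and the induced product $\succ_{T},\prec_{T}$ must be shown. The strategy is the same recipe in reverse: expand $u\succ_{T}v$ and $u\prec_{T}v$ inside the left-hand $l_{\bullet}(T(u\bullet_{T}v))$, apply the $\mathcal{O}$-operator identity to rewrite as $l_{\bullet}(T(u)\bullet T(v))$, and then invoke the corresponding axiom (\ref{R1})--(\ref{R9}) evaluated at $T(u),T(v)\in A$. The bookkeeping is delicate because each of the nine axioms combines two of the four operations $\searrow_{T},\nearrow_{T},\swarrow_{T},\nwarrow_{T}$ asymmetrically, but no new ingredients beyond the $\mathcal{O}$-operator condition and the given representation axioms are required.

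Finally I would read off the horizontal structure $\succ_{T}, \prec_{T}$ from $\searrow_{T}+\nearrow_{T}$ and $\swarrow_{T}+\nwarrow_{T}$, matching the displayed formulas by construction. The vertical structure is obtained from
\[
u\vee_{T} v = u\searrow_{T}v + u\swarrow_{T}v = (l_{\succ}+l_{\prec})(T(u))v = l_{\circ}(T(u))v,
\]
and analogously $u\wedge_{T} v = r_{\circ}(T(v))u$, confirming the remaining displayed formulas. Combined with the homomorphism identities established at the outset, this completes the proposition.
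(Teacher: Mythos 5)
Your proposal is correct. The paper itself offers no argument beyond ``it can be checked by direct computations,'' which presumably means verifying the nine quadri-algebra axioms (\ref{Lq1})--(\ref{Lq9}) one by one on $V$; your route instead funnels everything through Proposition~\ref{Qq1}, reducing the claim to (i) the standard fact that an $\mathcal{O}$-operator transports the Leibniz-dendriform structure to $(V,\succ_T,\prec_T)$, and (ii) the observation that $(l_{\succ}\circ T, r_{\succ}\circ T, l_{\prec}\circ T, r_{\prec}\circ T)$ is a representation of $(V,\succ_T,\prec_T)$, both of which follow by substituting $T(u\bullet_T v)=T(u)\bullet T(v)$ and then quoting (\ref{R1})--(\ref{R9}) at $T(u),T(v),T(w)$. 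The identifications $L_{\searrow_T}=l_{\succ}\circ T$, $R_{\nearrow_T}=r_{\succ}\circ T$, etc., the horizontal and vertical structures, and the homomorphism statements all check out exactly as you describe. What your organization buys is that each quadri-algebra axiom never has to be written out: the bookkeeping is absorbed into the already-established equivalence of Proposition~\ref{Qq1}, at the cost of having to verify the representation condition (ii), which a brute-force check of (\ref{Lq1})--(\ref{Lq9}) would subsume implicitly. Either way the underlying computations are the same, and no ingredient beyond the $\mathcal{O}$-operator identity and the representation axioms is needed.
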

\begin{proof}
It can be checked by direct computations.
\end{proof}

\begin{cor}\label{Qp} Let $(A,\succ,\prec)$ be a Leibniz-dendriform algebra and $T:V\longrightarrow A$ an $\mathcal O$-operator
on $(A,\succ,\prec)$ associated to the representation $(V,l_{\succ},r_{\succ},l_{\prec},r_{\prec})$.
Then $T(V)=\{T(v)|v\in V\}\subset A$ is a subalgebra of $A$ and there is an induced Leibniz-quadri-algebra structure
on $T(V)$ given by
\begin{align}\label{Qr01} 
&T(u)\swarrow T( v)=T(u\swarrow v)=T(l_{\prec}(T(u))v),\ \  \ T(u)\nwarrow T( v)=T(u\nwarrow v)=T(r_{\prec}(T(v))u),\\
\label{Qr02}&T(u)\searrow T( v)=T(u\searrow v)=T(l_{\succ}(T(u))v),\ \  \ T(u)\nearrow T( v)=T(u\nearrow v)=T(r_{\succ}(T(v))u)
\end{align}
for all $u,~v\in V.$
\end{cor}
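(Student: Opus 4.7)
The plan is to transport the Leibniz-quadri-algebra structure from $V$ down to the image $T(V)$ via $T$. First I would invoke Proposition \ref{Lq} to endow $V$ with the operations $\searrow_{T}, \nearrow_{T}, \swarrow_{T}, \nwarrow_{T}$ making it a Leibniz-quadri-algebra, whose horizontal Leibniz-dendriform structure $(\succ_{T},\prec_{T})$ satisfies the homomorphism relations $T(u\succ_{T} v)=T(u)\succ T(v)$ and $T(u\prec_{T} v)=T(u)\prec T(v)$. These identities immediately yield closure of $T(V)$ under $\succ$ and $\prec$ in $A$, so $T(V)$ is a Leibniz-dendriform subalgebra.

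The main obstacle is then the well-definedness of the four operations in \eqref{Qr01}--\eqref{Qr02}, since $T$ is not assumed to be injective. Independence of the representative of the first argument is immediate because $l_{\succ}, l_{\prec}, r_{\succ}, r_{\prec}$ depend only on the value of $T(u)$ (respectively $T(v)$). For independence of the second argument I would use the $\mathcal{O}$-operator identity itself: if $T(v)=T(v')$, then applying the defining relation to $(u,v)$ and $(u,v')$ and using linearity of $T$ gives
\begin{align*}
T(l_{\succ}(T(u))v)+T(r_{\succ}(T(v))u) &= T(u)\succ T(v) \\
&= T(u)\succ T(v') \\
&= T(l_{\succ}(T(u))v')+T(r_{\succ}(T(v))u),
\end{align*}
so $T(l_{\succ}(T(u))v)=T(l_{\succ}(T(u))v')$. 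The same argument with $\prec$ in place of $\succ$, and the symmetric roles of the two arguments, settles well-definedness of all four operations.

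Once this is in place, the Leibniz-quadri-algebra axioms \eqref{Lq1}--\eqref{Lq9} on $T(V)$ reduce mechanically to the already-established axioms on $V$: using the identification $T(u)\searrow T(v)=T(u\searrow_{T} v)$ and its three partners, together with $T(u)\circ T(v)=T(u\circ_{T} v)$, every term in each axiom on $T(V)$ is $T$ applied to the corresponding term of the same axiom on the Leibniz-quadri-algebra $(V,\searrow_{T},\nearrow_{T},\swarrow_{T},\nwarrow_{T})$ from Proposition \ref{Lq}, and the identity follows. I expect no further use of the $\mathcal{O}$-operator relation beyond the well-definedness step; everything after that is a bookkeeping argument based on the homomorphism property of $T$.
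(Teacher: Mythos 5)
Your proof is correct and follows the same route the paper intends: the corollary is stated as an immediate consequence of Proposition \ref{Lq}, transporting the Leibniz-quadri-algebra structure from $V$ to $T(V)$ via the homomorphism property of $T$, which is exactly what you do. Your explicit well-definedness check for non-injective $T$ --- using the $\mathcal{O}$-operator identity on $(u,v)$ and $(u,v')$ and cancelling the common term $T(r_{\succ}(T(v))u)$ --- is a detail the paper leaves implicit, and you handle it correctly.
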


By Theorem \ref{YE3} and Proposition \ref{Lq}, we have
 \begin{ex} Let $(A,\succ,\prec)$ be a Leibniz-dendriform algebra and $r\in A\otimes A$ a skew-symmetric solution of the LD-YBE.
Then $(A^{*},\searrow,\nearrow,\swarrow,\nwarrow)$ is a Leibniz-quadri-algebra, where
\begin{align*} 
&a\swarrow b=-L_{\prec}^{*}(T_{r}(a))b,\ \  \ a\nwarrow b=-(L_{\circ}^{*}+R_{\circ}^{*})(T_{r}(b))a,\\
&a\searrow b=L_{\circ}^{*}(T_{r}(a))b,\ \  \ a\nearrow b=R_{\odot}^{*}(T_{r}(b))a,~ \forall a,~b\in A^{*}.
\end{align*}
\end{ex}

\begin{defi} 
 Let $(A,\succ,\prec)$ be a Leibniz-dendriform algebra and $\omega$ be skew-symmetric bilinear map.
 $\omega$ is called a
2-cocycle on $(A,\succ,\prec)$ if $\omega$ satisfies
\begin{align} 
\label{Qr1} \omega(x\succ y+x\prec y,z)=\omega(x,y\succ z+z\prec y)-\omega(y,x\succ z),~~\forall~x,y,z\in A.
\end{align}
\end{defi}

\begin{pro} 
 Let $(A,\succ,\prec)$ be a Leibniz-dendriform algebra together with an non-degenerate
2-cocycle $\omega$, then
 there is a compatible Leibniz-quadri-algebra structure $(\searrow,\nearrow,\swarrow,\nwarrow)$ on $A$
 given by Eqs.~(\ref{Qr2})-(\ref{Qr3}) such that $(A,\succ,\prec)$ is the associated Leibniz-dendriform algebra.
Conversely, suppose that $(A,\searrow,\nearrow,\swarrow,\nwarrow)$ is a Leibniz-quadri-algebra equipped with 
a skew-symmetric bilinear map $\omega$. If $\omega$ is invariant on $A$, that is,
\begin{align} \label{Qr2}
&\omega(x\searrow y,z)=-\omega(y,x\circ z), \ \ \ \omega(x\swarrow y,z)=\omega(y,x\prec z),\\
\label{Qr3} &\omega(x\nearrow y,z)=-\omega(x,z\odot y), \ \ \ \omega(x\nwarrow y,z)=\omega(x,y\star z),
\end{align}
 where $x\circ y=x\succ y+x\prec y,~x\odot y=x\succ y+y\prec x$ and $x\star y=x\circ y+y\circ x$.
 Then $\omega$ is a 2-cocycle of the associated Leibniz-dendriform algebra $(A_{h},\succ,\prec)$.
\end{pro}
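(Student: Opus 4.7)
The plan is to handle the two implications separately. The converse is a short expansion, whereas the forward direction uses non-degeneracy together with Proposition \ref{Dr} to reduce the verification to one already carried out.

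For the converse, let $(A, \searrow, \nearrow, \swarrow, \nwarrow)$ carry an invariant skew-symmetric $\omega$. Since the horizontal product decomposes as $x \circ y = x \searrow y + x \nearrow y + x \swarrow y + x \nwarrow y$, I would expand $\omega(x \circ y, z)$ termwise using the four relations in (\ref{Qr2})-(\ref{Qr3}) to obtain $-\omega(y, x \circ z) - \omega(x, z \odot y) + \omega(y, x \prec z) + \omega(x, y \star z)$. Using the elementary identities $x \circ z - x \prec z = x \succ z$ and $y \star z - z \odot y = y \succ z + z \prec y$, this collapses to $-\omega(y, x \succ z) + \omega(x, y \succ z + z \prec y)$, which is exactly (\ref{Qr1}).

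For the forward direction, the non-degeneracy of $\omega$ first guarantees that the relations (\ref{Qr2})-(\ref{Qr3}) uniquely define each of the four operations $\searrow, \nearrow, \swarrow, \nwarrow$. To verify $x \succ y = x \searrow y + x \nearrow y$, I would pair the right-hand side with arbitrary $z$, apply (\ref{Qr2})-(\ref{Qr3}) to obtain $-\omega(y, x \circ z) - \omega(x, z \odot y)$, then use (\ref{Qr1}) with variables $(x, z, y)$ together with skew-symmetry to rewrite the second summand, collapsing the expression to $-\omega(z, x \succ y) = \omega(x \succ y, z)$; non-degeneracy then yields the identity. The decomposition $x \prec y = x \swarrow y + x \nwarrow y$ follows from $x \circ y = x \succ y + x \prec y$ combined with the summed identity established in the converse.

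Finally, by Proposition \ref{Qq1}, the axioms (\ref{Lq1})-(\ref{Lq9}) are equivalent to $(L_{\searrow}, R_{\nearrow}, L_{\swarrow}, R_{\nwarrow})$ being a representation of $(A, \succ, \prec)$. Non-degeneracy supplies a linear isomorphism $\omega^{\flat}: A \to A^{*}$, $y \mapsto \omega(y, -)$, and the four invariance relations (\ref{Qr2})-(\ref{Qr3}) are precisely the statement that $\omega^{\flat}$ intertwines the quadruple on $A$ with $(L_{\circ}^{*}, R_{\odot}^{*}, -L_{\prec}^{*}, -L_{\star}^{*})$ on $A^{*}$. Since the latter is a representation of $(A, \succ, \prec)$ by Proposition \ref{Dr}(c), so is the former, completing the verification of all nine axioms. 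The main obstacle is the bookkeeping required to confirm that the sign conventions in (\ref{Qr2})-(\ref{Qr3}) match those appearing in the dual representation formula of Proposition \ref{Dr}, but this is a direct check using the pairing $\langle f^{*}(x) u^{*}, v\rangle = -\langle u^{*}, f(x) v\rangle$.
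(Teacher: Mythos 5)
Your argument is correct, and both directions check out: the converse is the same termwise expansion the paper dismisses as straightforward, and in the forward direction the intertwining relations you assert do hold (with $\omega^{\flat}(y)=\omega(y,-)$ one verifies $\omega^{\flat}L_{\searrow}(x)=L_{\circ}^{*}(x)\omega^{\flat}$, $\omega^{\flat}R_{\nearrow}(y)=R_{\odot}^{*}(y)\omega^{\flat}$, $\omega^{\flat}L_{\swarrow}(x)=-L_{\prec}^{*}(x)\omega^{\flat}$, $\omega^{\flat}R_{\nwarrow}(y)=-L_{\star}^{*}(y)\omega^{\flat}$, exactly matching the slots of the dual representation in Proposition \ref{Dr}(c)). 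However, your route differs from the paper's in a genuine way. The paper introduces $T=(\omega^{\flat})^{-1}$, uses the $2$-cocycle identity to show that $T$ is an $\mathcal{O}$-operator on $(A,\succ,\prec)$ associated to the dual representation $(A^{*},L_{\circ}^{*},R_{\odot}^{*},-L_{\prec}^{*},-L_{\star}^{*})$, invokes Proposition \ref{Lq} to obtain a Leibniz-quadri-algebra on $A^{*}$, transports it to $A=T(A^{*})$ via Corollary \ref{Qp}, and only at the end verifies that the transported operations satisfy Eqs.~(\ref{Qr2})--(\ref{Qr3}). You instead take Eqs.~(\ref{Qr2})--(\ref{Qr3}) as the definition (legitimate by non-degeneracy), prove the splittings $\succ\,=\,\searrow+\nearrow$ and $\prec\,=\,\swarrow+\nwarrow$ from the cocycle identity, and then get all nine axioms at once from Proposition \ref{Qq1} plus the conjugation-invariance of the representation axioms. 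The two arguments rest on the same underlying fact --- $\omega^{\flat}$ is an isomorphism onto the dual representation --- but yours bypasses the $\mathcal{O}$-operator machinery and avoids the final consistency check, at the cost of having to verify the horizontal splitting explicitly (a step the paper gets for free from Proposition \ref{Lq}). Either presentation is complete; yours is arguably the more economical of the two.
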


\begin{proof} 
Since $\omega$ is a non-degenerate skew-symmetric bilinear map on $A$, which can induce an invertible linear map
$T:A^{*}\longrightarrow A$ by
\begin{align*} 
\omega(T(a),y)=\langle a,y\rangle, \ \ \ \forall~a\in A^{*},~y\in A.
\end{align*}
By Eq.~(\ref{Qr1}), we have for all $a,b,c\in A^{*}$, 
\begin{align*} 
&\langle c,T(a)\succ T(b)\rangle=\omega(T(c),T(a)\succ T(b))\\=&
\omega(T(b),T(a)\circ T(c) )+\omega(T(a), T(c)\odot T(b))
\\=&\langle a,T(c)\odot T(b)\rangle +\langle b,T(a)\circ T(c)\rangle
\\=&-\langle L_{\circ}^{*}(T(a))b, T(c)\rangle-\langle R_{\odot}^{*}(T(b))a,T(c)\rangle
\\=&-\omega (T(L_{\circ}^{*}(T(a))b),T(c))-\omega (T(R_{\odot}^{*}(T(b))a),T(c))
\\=&\omega (T(c),T(L_{\circ}^{*}(T(a))b)) +\omega (T(c),T(R_{\odot}^{*}(T(b))a))
\\=&\langle c,T(L_{\circ}^{*}(T(a))b) +T(R_{\odot}^{*}(T(b))a)\rangle,
\end{align*}
which implies that 
  \begin{align*}T(a)\succ T(b)= T(L_{\circ}^{*}(T(a))b) +T(R_{\odot}^{*}(T(b))a).\end{align*}
 Analogously,
 \begin{align*}T(a)\prec T(b)= T(-L_{\prec}^{*}(T(a))b-L_{\star}^{*}(T(b))a).\end{align*}
 Thus, $T$ is an $\mathcal{O}$-operator on $(A,\succ,\prec)$ associated to the representation
$(A,L_{\circ}^{*},R_{\odot}^{*},-L_{\prec}^{*},-L_{\star}^{*})$.
Combining Proposition \ref{Lq}, there is a compatible Leibniz-quadri-algebra structure on $A$
given as follows:
\begin{align*} 
&a\swarrow b=-L_{\prec}^{*}(T(a))b,\ \  \ a\nwarrow b=-(L_{\circ}^{*}+R_{\circ}^{*})(T(b))a,\\
&a\searrow b=L_{\circ}^{*}(T(a))b,\ \  \ a\nearrow b=R_{\odot}^{*}(T(b))a,~ \forall a,~b\in A^{*}.
\end{align*}
For all $x,y\in A$, there are $a,b\in A^{*}$ such that $T(a)=x,~T(b)=y$.
In light of Corollary \ref{Qp}, there is a Leibniz-quadri-algebra structure on $A$
given by Eqs.~(\ref{Qr01})-(\ref{Qr02}). In detail,
\begin{align*} 
&x\swarrow y=T(-L_{\prec}^{*}(T(a))b),\ \  \ x\nwarrow y=-T((L_{\circ}^{*}+R_{\circ}^{*})(T(b))a),\\
&x\searrow y=T(L_{\circ}^{*}(T(a))b),\ \  \ x\nearrow y=T(R_{\odot}^{*}(T(b))a),~ \forall a,~b\in A^{*}.
\end{align*}
Thus,
\begin{align*} 
&\omega(x\searrow y,z)=\omega(T(-L_{\prec}^{*}(T(a))b),z)=\langle -L_{\prec}^{*}(T(a))b,z \rangle
\\=&\langle b,T(a)\prec z \rangle=\omega(T(b),T(a)\prec z )
\\=&\omega(y,x\prec z ),\\
&\omega(x\searrow y,z)=\omega(T(L_{\circ}^{*}(T(a))b),z)=
\langle L_{\circ}^{*}(T(a))b,z \rangle
\\=&-\langle b,T(a)\circ z \rangle
=-\omega(T(b),T(a)\circ z)
\\=&-\omega(y,x\circ z).
\end{align*}
By the same token, Eq.~(\ref{Qr3}) holds. The converse is straightforward.
 \end{proof}
 
\begin{thm} \label{Yo}
 Let $(A,\succ,\prec)$ be a Leibniz-dendriform algebra and $(V,l_{\succ},r_{\succ},l_{\prec},r_{\prec})$ be
  a representation of $(A,\succ,\prec)$. Suppose that $(V^{*},l_{\circ}^*,r_{\odot}^*,-l_{\prec}^*,
-l_{\star}^*)$ is the dual representation of $A$ given by Proposition \ref{Dr}.
Let $\hat{A}=A\ltimes V^{*}$ and
 $T:V\longrightarrow A$ be a linear map which is identifies an element in $\hat{A}\otimes \hat{A}$ through
 ($Hom(V,A)\simeq A\otimes V^{*}\subseteq \hat{A}\otimes \hat{A}$).
  Then $r=T-\tau(T)$ is a skew-symmetric solution
of the LD-YBE in the Leibniz-dendriform algebra $\hat{A}$ if and only if $T$ is an $\mathcal O$-operator
on $(A,\succ,\prec)$ associated to $(V,l_{\succ},r_{\succ},l_{\prec},r_{\prec})$.
\end{thm}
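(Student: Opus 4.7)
The strategy is to apply Theorem \ref{YE3} to the Leibniz-dendriform algebra $\hat A = A\ltimes V^*$ so that the LD-YBE condition on $r=T-\tau(T)$ becomes an $\mathcal O$-operator condition on $T_r:\hat A^*\to\hat A$, and then to decompose that condition via the direct sum $\hat A=A\oplus V^*$ into precisely the $\mathcal O$-operator relation for $T:V\to A$ together with identities that hold automatically.

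First I would identify $\hat A^*\cong V\oplus A^*$ through the canonical pairing $\langle u+\alpha,x+\phi\rangle=\phi(u)+\alpha(x)$ and compute $T_r$ from the definition $\langle T_r(\eta),\zeta\rangle=\langle r,\eta\otimes\zeta\rangle$. Writing $T=\sum_i a_i\otimes v_i^*\in A\otimes V^*$ so that $T(v)=\sum_i\langle v_i^*,v\rangle a_i$, a direct computation gives
\[ T_r(u+\alpha)=-T(u)+T^*(\alpha), \]
where $T^*:A^*\to V^*$ is the transpose of $T$.

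Next, by Theorem \ref{YE3}, $r=T-\tau(T)$ is a skew-symmetric solution of the LD-YBE in $\hat A$ if and only if $T_r$ is an $\mathcal O$-operator on $(\hat A,\succ,\prec)$ associated to $(\hat A^*,L_\circ^*,R_\odot^*,-L_\prec^*,-L_\star^*)$. I would unpack these equations using the semidirect-product structure from Remark \ref{La}(2), in which the operations on $A\oplus V^*$ are governed by the dual representation $(l_\circ^*,r_\odot^*,-l_\prec^*,-l_\star^*)$ of Proposition \ref{Dr}(c). With $\eta=u+\alpha$ and $\zeta=v+\beta$, the left-hand side
\[ T_r(\eta)\succ T_r(\zeta)=\bigl(-T(u)+T^*(\alpha)\bigr)\succ\bigl(-T(v)+T^*(\beta)\bigr) \]
splits into an $A$-component equal to $T(u)\succ T(v)$ and a $V^*$-component built solely from the dual-representation actions of $T(u),T(v)$ on $T^*(\alpha),T^*(\beta)$.

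Comparing with $T_r\bigl(L_\circ^*(T_r(\eta))\zeta+R_\odot^*(T_r(\zeta))\eta\bigr)$ component by component via the formula for $T_r$, the $A$-component reduces to the $\succ$-part of the $\mathcal O$-operator relation $T(u)\succ T(v)=T\bigl(l_\succ(T(u))v+r_\succ(T(v))u\bigr)$, while the $V^*$-component becomes an identity among coadjoint operators that is an automatic consequence of Proposition \ref{Dr} together with the definition of $T^*$. An analogous analysis of the $\prec$-equation gives $T(u)\prec T(v)=T\bigl(l_\prec(T(u))v+r_\prec(T(v))u\bigr)$. The main obstacle is this bookkeeping step: carefully tracking how $L_\circ^*,R_\odot^*,L_\prec^*,L_\star^*$ act on $\hat A^*=V\oplus A^*$ (each action itself splitting into two pieces), matching them with the dual-representation actions inside $\hat A=A\ltimes V^*$, and checking that the $V^*$-components collapse without imposing any extra condition beyond the $\mathcal O$-operator relation for $T$.
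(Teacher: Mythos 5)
Your route is genuinely different from the paper's. The paper expands $S(r)=r_{23}\circ r_{13}-r_{12}\odot r_{23}-r_{12}\succ r_{13}$ directly in $\hat A^{\otimes 3}$ using a basis $\{v_i\}$ of $V$, sorts the resulting tensors by type, and reads off that $S(r)=0$ is equivalent to the three conditions \eqref{YE13}--\eqref{YE15} (the $\circ$-, $\odot$- and $\succ$-forms of the $\mathcal O$-operator identity), which together are equivalent to $T$ being an $\mathcal O$-operator. You instead invoke Theorem \ref{YE3} on $\hat A$ to replace the tensor equation by the operator identity for $T_{r}:\hat A^{*}\to\hat A$ and then decompose along $\hat A=A\oplus V^{*}$, $\hat A^{*}\cong V\oplus A^{*}$. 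This is legitimate, and your formula $T_{r}(u+\alpha)=-T(u)+T^{*}(\alpha)$ is correct; what it buys you is a reuse of Theorem \ref{YE3} in place of redoing the $\hat A^{\otimes 3}$ computation, at the price of having to unpack the coadjoint actions of $\hat A$ on $V\oplus A^{*}$, which mix the two summands.

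One claim in your sketch is wrong as stated and must be repaired before the proof is written out. The $V^{*}$-component of the $\succ$-equation for $T_{r}$ is \emph{not} an identity that follows automatically from Proposition \ref{Dr} and the definition of $T^{*}$. Carrying out the pairings, the $V^{*}$-component of $T_{r}(\eta)\succ T_{r}(\zeta)=T_{r}\bigl(L_{\circ}^{*}(T_{r}(\eta))\zeta+R_{\odot}^{*}(T_{r}(\zeta))\eta\bigr)$ is equivalent to the two conditions $T(u)\circ T(u')=T\bigl(l_{\circ}(T(u))u'+r_{\circ}(T(u'))u\bigr)$ and $T(u')\odot T(v)=T\bigl(l_{\odot}(T(u'))v+r_{\odot}(T(v))u'\bigr)$, and the $V^{*}$-component of the $\prec$-equation similarly produces the $\prec$- and $\star$-forms. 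These are nontrivial constraints on $T$; they merely happen to be consequences of the $\succ$- and $\prec$-forms that the $A$-components deliver (since $\circ=\succ+\prec$, $l_{\odot}=l_{\succ}+r_{\prec}$, $r_{\odot}=r_{\succ}+l_{\prec}$, $l_{\star}=l_{\circ}+r_{\circ}$). So your closing sentence --- that the $V^{*}$-components impose nothing beyond the $\mathcal O$-operator relation for $T$ --- is the correct statement, but it has to be established by deriving the $\circ$-, $\odot$- and $\star$-forms from the $\succ$- and $\prec$-forms, not by appealing to an automatic identity of coadjoint operators. With that correction the equivalence closes in both directions, and the conditions you obtain coincide with the paper's \eqref{YE13}--\eqref{YE15}.
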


\begin{proof}
For all $x+a^{*},y+b^{*}\in \hat{A}$ with $x,y\in A$ and $a^{*},b^{*}\in V^{*}$, the Leibniz-dendriform algebraic structure
$(\succ,\prec)$ on $\hat{A}$ is defined by
\begin{align}\label{YE9}
 &(x+a^{*})\succ(y+b^{*})=x\succ y+l_{\circ}^*(x)b^{*}+r_{\odot}^{*}(y)a^{*},
 \\& \label{YE10}(x+a^{*})\prec(y+b^{*})=x\prec y-l_{\prec}^{*}(x)b^{*}-l_{\star}^*(y)a^{*},\end{align}
and the associated Leibniz algebraic structure $\circ$ on $\hat{A}$ is given by
 \begin{align*}
 (x+a^{*})\circ(y+b^{*})=x\circ y+l_{\succ}^*(x)b^{*}-l_{\odot}^{*}(y)a^{*}.\end{align*}
Assume that $\{v_1,v_2,\cdot\cdot\cdot, v_n\}$ is a basis of $V$ and
$\{v_1^{*},v^{*}_2,\cdot\cdot\cdot, v^{*}_n\}$ is the dual basis of $V^{*}$. Then
$T=\sum_{i=1}^{n}T(v_i)\otimes v^{*}_i\in T(V)\otimes V^{*}\subseteq \hat{A}\otimes \hat{A}$. Note that
 \begin{align}\label{YE11}
 &l_{\succ}^{*}(T(v_{i}))v_{j}^{*}
 =\sum_{k=1}^{n}\langle -v_{j}^{*},l_{\succ}(T(v_{i})v_k\rangle v_{k}^{*}, \ \ \ r_{\succ}^{*}(T(v_{i}))v_{j}^{*}=
 \sum_{k=1}^{n}\langle -v_{j}^{*},r_{\succ}(T(v_{i})v_k\rangle v_{k}^{*},\\&
 \label{YE12} l_{\prec}^{*}(T(v_{i}))v_{j}^{*}=
 \sum_{k=1}^{n}\langle -v_{j}^{*},l_{\prec}(T(v_{i})v_k\rangle v_{k}^{*}
, \ \ \ r_{\prec}^{*}(T(v_{i}))v_{j}^{*}= \sum_{k=1}^{n}\langle -v_{j}^{*},r_{\prec}(T(v_{i})v_k\rangle v_{k}^{*}.\end{align}
 Using Eqs.~\eqref{YE9}-\eqref{YE12}, we compute:
 \begin{align*}
r_{23}\circ r_{13}
 &=\sum_{i,j=1}^{n}-T(v_j)\otimes v_{i}^{*}\otimes (T(v_i)\circ  v_{j}^{*})-
v_{j}^{*}\otimes T(v_{i})\otimes (v_{i}^{*}\circ T(v_j))
+v_{j}^{*}\otimes v_{i}^{*}\otimes (T(v_i)\circ T(v_j))
\\&=\sum_{i,j=1}^{n}-T(v_j)\otimes v_{i}^{*}\otimes [l_{\succ}^{*}(T(v_i))
 v_{j}^{*}]+v_{j}^{*}\otimes T(v_{i})\otimes l_{\odot}^{*}(T(v_j))v_{i}^{*}
+v_{j}^{*}\otimes v_{i}^{*}\otimes (T(v_i)\circ T(v_j))
\\&=\sum_{i,j=1}^{n}T(l_{\succ}(T(v_i))v_j)\otimes v_{i}^{*}\otimes
 v_{j}^{*})-v_{j}^{*}\otimes T(l_{\odot}(T(v_j))v_{i})\otimes v_{i}^{*}
+v_{j}^{*}\otimes v_{i}^{*}\otimes (T(v_i)\circ T(v_j)),\end{align*}
 \begin{align*}
 r_{12}\succ r_{13}
 &=\sum_{i,j=1}^{n}T(v_i)\succ T(v_j)\otimes v_{i}^{*}\otimes v_{j}^{*}-
 T(v_i)\succ v_{j}^{*}\otimes v_{i}^{*}\otimes T(v_{j})
-v_{i}^{*}\succ T(v_j)\otimes T(v_{i})\otimes v_{j}^{*}
\\&=\sum_{i,j=1}^{n}T(v_i)\succ T(v_j)\otimes v_{i}^{*}\otimes v_{j}^{*}
-[l_{\circ}^{*}(T(v_i))v_{j}^{*}]\otimes v_{i}^{*}\otimes T(v_{j})
-r_{\odot}^{*}(T(v_j))v_{i}^{*}\otimes T(v_{i})\otimes v_{j}^{*}
\\&=\sum_{i,j=1}^{n}T(v_i)\succ T(v_j)\otimes v_{i}^{*}\otimes v_{j}^{*}
+v_{j}^{*}\otimes v_{i}^{*}\otimes T(l_{\circ}(T(v_i))v_{j})
+v_{i}^{*}\otimes T(r_{\odot}(T(v_j))v_{i})\otimes v_{j}^{*},\end{align*}
\begin{align*}
r_{12}\odot r_{23}
 &=\sum_{i,j=1}^{n}T(v_i)\otimes (v_{i}^{*}\odot  T(v_j))\otimes v_{j}^{*}+
v_{i}^{*}\otimes (T(v_{i})\odot v_{j}^{*})\otimes T(v_{j})-
v_{i}^{*}\otimes (T(v_{i})\odot T(v_{j}))\otimes v_{j}^{*}
\\&=\sum_{i,j=1}^{n}T(v_i)\otimes v_{i}^{*}\otimes [r_{\succ}^*(T(v_j))v_{j}^{*}]
-v_{i}^{*}\otimes r_{\circ}^*(T(v_{i}))v_{j}^{*} \otimes T(v_{j})
-v_{i}^{*}\otimes (T(v_{i})\odot T(v_{j}))\otimes v_{j}^{*}
\\&=\sum_{i,j=1}^{n}-T(r_{\succ}(T(v_j))v_i)\otimes v_{i}^{*}\otimes v_{j}^{*}
+v_{i}^{*}\otimes v_{j}^{*} \otimes T(r_{\circ}(T(v_{i})v_{j}))
-v_{i}^{*}\otimes (T(v_{i})\odot T(v_{j}))\otimes v_{j}^{*}.\end{align*}
Thus, $r=T-\tau(T)$ is a skew-symmetric solution
of the LD-YBE in the Leibniz-dendriform algebra $\hat{A}$ if and only if the following equations hold:
 \begin{align}\label{YE13}
&T(v_i)\circ T(v_j)=T(T(l_{\circ}(T(v_i))v_j+r_{\circ}(Tv_j)v_i)),
\\&\label{YE14}
 T(v_i)\odot T(v_j)=T(l_{\odot}(T(v_i))v_{j})+T(r_{\odot}(T(v_{j})v_{i}),
\\&\label{YE15}
T(v_{i})\succ T(v_{j})=T(T(l_{\succ}(T(v_i))v_{j}+r_{\succ}(T(v_j))v_{i})).\end{align}
It is straightforward to verify that Eqs.~\eqref{YE13}-\eqref{YE15} hold if and only if
 $T$ is an $\mathcal O$-operator
on $(A,\succ,\prec)$ associated with $(V,l_{\succ},r_{\succ},l_{\prec},r_{\prec})$.
The proof is finished.
\end{proof}

\begin{ex} Let $(A,\succ,\prec)$ be the 2-dimensional Leibniz-dendriform algebra with a basis $\{e_1,e_2\}$
given in Example \ref{E2}. Based on Example \ref{Eo}, we know that
the linear map
\begin{align*}T:A\longrightarrow A ,~T(e_1)=te_2,\ \ T(e_2)=0,\ \ t\in \mathbb{R} \end{align*}
is an $\mathcal{O}$-operator $T$ on $(A,\succ,\prec)$ associated to the representation
$(A,L_{\succ},R_{\succ},L_{\prec},R_{\prec})$.
Denote the dual basis of $A^{*}$ by $\{e_1^{*}, e_2^{*} \}$.
The semi-direct product $A\ltimes A^{*}$ of $(A,\succ,\prec)$ and its representation
$(A^{*},L_{\circ}^{*},R_{\odot}^{*},-L_{\prec}^{*},-L_{\circ}^{*}-R_{\circ}^{*})$ is a
Leibniz-dendriform algebra with the binary operation $(\succ,\prec)$ given by~(only non-trivial operations are listed)
   \begin{flalign*}
&e_1\succ e_1=e_1,\ \ \ e_1\prec e_1=-e_1, \ \ \ e_1\succ e_2=e_2,\ \ \ e_2\prec e_1=-e_2,
\\& e_1\prec e_1^{*}= -e_1^{*}, \ \ \ e_2\prec e_2^{*}= -e_1^{*},\ \ \ e_1\succ e_2^{*}= -e_2^{*}.
\end{flalign*}
 On the basis of Theorem \ref{Yo},
 \begin{align*}r=\sum_{i=1}^{2}T(e_i)\otimes e_i^{*}-e_i^{*}\otimes T(e_i)=te_2\otimes e_1^{*}- te_1^{*}\otimes e_2.
 \end{align*}
 is a skew-symmetric solution of the LD-YBE in the Leibniz-dendriform algebra $(A\ltimes A^{*},\succ,\prec)$. By Theorem \ref{BY},
$ (A\ltimes A^{*},\succ,\prec,\Delta_{\succ,r},\Delta_{\prec,r})$ is a Leibniz-dendriform bialgebra with the linear maps
$\Delta_{\succ,r},\Delta_{\prec,r}:A\ltimes A^{*}\longrightarrow (A\ltimes A^{*})\otimes (A\ltimes A^{*})$ defined respectively by
 \begin{align*}&
\Delta_{\succ,r}(x)=(L_{\odot}(x)\otimes I-I\otimes R_{\circ}(x))r,
\\&\Delta_{\prec,r}(x)=(L_{\star}(x)\otimes I-I\otimes R_{\prec}(x))\tau(r).
\end{align*}
Explicitly,
 \begin{align*}&
\Delta_{\succ,r}(e_1)=-te_1^{*}\otimes e_2, \ \  \Delta_{\succ,r}(e_2^{*})=-2te_1^{*}\otimes e_1^{*},\ \ \
\Delta_{\prec,r}(e_2^{*})=2te_1^{*}\otimes e_1^{*},
\\&\Delta_{\prec,r}(e_i)=\Delta_{\succ,r}(e_2)=\Delta_{\prec,r}(e_1^{*})=\Delta_{\succ,r}(e_1^{*})=0,~(i=1,2).
\end{align*}
\end{ex}

Combining Proposition \ref{Dr}, Proposition \ref{Qq1}, Example \ref{Qq2} and Theorem \ref{Yo}, we have

\begin{cor} Let $(A,\searrow,\nearrow,\swarrow,\nwarrow)$ be a Leibniz-quadri-algebra. Then 
$r=\sum_{i}e_i\otimes e_{i}^{*}-e_{i}^{*}\otimes e_i$ is a skew-symmetric solution of the LD-YBE in
the Leibniz-dendriform algebras $A_{h}\ltimes A^{*}$ and $A_{v}\ltimes A^{*}$,
 where $\{e_1,e_2,\cdot \cdot\cdot,e_n\}$ is a basis of $A$
and $\{e_{1}^{*},e_{2}^{*},\cdot \cdot\cdot,e_{n}^{*}\}$ is the dual basis.
\end{cor}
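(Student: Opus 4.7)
The proof proceeds by assembling the ingredients already proved and applying them in sequence. First, by Proposition \ref{Qq1} the Leibniz-quadri-algebra structure on $A$ yields the horizontal Leibniz-dendriform algebra $(A_h,\succ,\prec)$ together with a representation $(A, L_{\searrow}, R_{\nearrow}, L_{\swarrow}, R_{\nwarrow})$ of $A_h$ on itself, and likewise the vertical Leibniz-dendriform algebra $(A_v,\vee,\wedge)$ with its representation $(A, L_{\searrow}, R_{\swarrow}, L_{\nearrow}, R_{\nwarrow})$. By Example \ref{Qq2}, the identity map $I:A\rightarrow A$ is an $\mathcal{O}$-operator on $(A_h,\succ,\prec)$ associated to the former representation, and also an $\mathcal{O}$-operator on $(A_v,\vee,\wedge)$ associated to the latter.

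Next, for each of the two cases, Proposition \ref{Dr} furnishes the dual representation on $A^{*}$ needed to form the semidirect product Leibniz-dendriform algebras $A_h \ltimes A^{*}$ and $A_v \ltimes A^{*}$ (in the sense of Remark \ref{La}). We are now in position to invoke Theorem \ref{Yo} with $V=A$, $V^{*}=A^{*}$, and $T=I$. That theorem asserts that $r=T-\tau(T)$, viewed as an element of $\hat A\otimes \hat A$ via the isomorphism $\mathrm{Hom}(V,A)\simeq A\otimes V^{*}\subseteq \hat A\otimes \hat A$, is a skew-symmetric solution of the LD-YBE in the Leibniz-dendriform algebra $\hat A = A \ltimes A^{*}$ if and only if $T$ is an $\mathcal{O}$-operator.

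The final step is simply to identify $T=I$ explicitly as a tensor. Fixing a basis $\{e_1,\dots,e_n\}$ of $A$ with dual basis $\{e_1^{*},\dots,e_n^{*}\}$, the identity corresponds under $\mathrm{Hom}(A,A)\simeq A\otimes A^{*}$ to $\sum_i e_i\otimes e_i^{*}$, so
\[
r \;=\; T-\tau(T)\;=\;\sum_{i}e_i\otimes e_i^{*}-e_i^{*}\otimes e_i.
\]
Applying Theorem \ref{Yo} in the horizontal case yields the claim for $A_h \ltimes A^{*}$, and applying it in the vertical case yields the claim for $A_v \ltimes A^{*}$. There is no real obstacle here; the only minor subtlety is to keep track of the two distinct Leibniz-dendriform structures (horizontal vs.\ vertical) and their corresponding representations, so that Theorem \ref{Yo} is invoked with the correct input data in each case. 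Since both representations involve the same underlying vector space and the same identity $\mathcal{O}$-operator, the resulting $r$-matrix has the same explicit form in both settings.
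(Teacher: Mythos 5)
Your proposal is correct and follows exactly the route the paper intends: the paper derives this corollary by combining Proposition \ref{Qq1}, Example \ref{Qq2}, Proposition \ref{Dr} and Theorem \ref{Yo}, which is precisely the chain you assemble, with the identity map as the $\mathcal{O}$-operator identified with $\sum_i e_i\otimes e_i^{*}$. Your write-up simply makes explicit the steps the paper leaves implicit.
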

\section{Quasi-triangular Leibniz-dendriform bialgebras and factorizable Leibniz-dendriform bialgebras}

In light of Theorem \ref{BY}, a skew-symmetric solution of the LD-YBE yields a
 Leibniz-dendriform bialgebra. Here, we extend this construction to
  general solutions, demonstrating that such bialgebras exist beyond the skew-symmetric case.

\subsection{Quasi-triangular Leibniz-dendriform bialgebras }

\begin{defi} \label{In1}
 Let $(A,\succ,\prec)$ be a Leibniz-dendriform algebra and $r\in A\otimes A$. Then $r$ is called {\bf invariant} if
 \begin{align}&\label{Iv1}
(L_{\odot}(x)\otimes I-I\otimes R_{\circ}(x))r=0,
\\&\label{Iv2}(L_{\star}(x)\otimes I-I\otimes R_{\prec}(x))\tau(r)=0,
\end{align}
\end{defi}

 \begin{lem} \label{In2}
 Let $(A,\succ,\prec)$ be a Leibniz-dendriform algebra and $r\in A\otimes A$. Then $r$ is invariant if and only if
 \begin{align}&\label{Iv3}
R_{\circ}(x) T_{r}(\zeta)+T_{r}(L_{\odot}^{*}(x)\zeta)=0,
\\&\label{Iv4}L_{\star}(x) T_{r}(\zeta)+T_{r}(R_{\prec}^{*}(x)\zeta)=0,~~\forall~x\in A,\zeta\in A^{*}.
\end{align}
Moreover, Eqs.~(\ref{Iv3})-(\ref{Iv4}) hold if and only if the following equations hold:
\begin{align}&\label{Iv5}
L_{\circ}^{*} (T_{r}(\zeta))\eta=R_{\odot}^{*}(T_{\tau(r)}(\eta))\zeta),
\\&\label{Iv6}L_{\star}^{*} (T_{r}(\zeta))\eta=L_{\prec}^{*}(T_{\tau(r)}(\eta))\zeta,~~\forall~x\in A,\zeta\in A^{*}.
\end{align}
\end{lem}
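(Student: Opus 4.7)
\textbf{Proof plan for Lemma \ref{In2}.} The strategy is to use the canonical pairing between $A\otimes A$ and $A^*\otimes A^*$, together with the identity $T_r^{*}=T_{\tau(r)}$ noted just before Proposition~\ref{Y4}, so that both equivalences become purely formal dualization arguments. Write $r=\sum_{i}a_i\otimes b_i$, so that $T_r(\zeta)=\sum_i\langle\zeta,a_i\rangle b_i$ and $T_{\tau(r)}(\eta)=\sum_i\langle\eta,b_i\rangle a_i$; these formulas, together with $\langle f^{*}(x)u^{*},v\rangle=-\langle u^{*},f(x)v\rangle$, will be the only tools needed.

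\emph{First equivalence: invariance $\Longleftrightarrow$ Eqs.~(\ref{Iv3})--(\ref{Iv4}).} For Eq.~(\ref{Iv1}), I expand
\[
(L_{\odot}(x)\otimes I-I\otimes R_{\circ}(x))r=\sum_i (x\odot a_i)\otimes b_i-\sum_i a_i\otimes (b_i\circ x),
\]
and then contract the \emph{first} tensor factor against an arbitrary $\zeta\in A^{*}$. The first summand becomes $-T_r(L_{\odot}^{*}(x)\zeta)$ and the second becomes $R_{\circ}(x)T_r(\zeta)$, yielding Eq.~(\ref{Iv3}) exactly; conversely, since $\zeta$ is arbitrary and the pairing separates tensors, running the calculation backward recovers Eq.~(\ref{Iv1}). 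The derivation of Eq.~(\ref{Iv4}) from Eq.~(\ref{Iv2}) is parallel, with one subtlety: because $\tau(r)=\sum_i b_i\otimes a_i$, one must contract the \emph{second} slot of $\tau(r)$ (that is, the $a_i$ slot) against $\zeta$ in order to get expressions involving $T_r$ rather than $T_{\tau(r)}$; this produces $L_{\star}(x)T_r(\zeta)+T_r(R_{\prec}^{*}(x)\zeta)=0$.

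\emph{Second equivalence: Eqs.~(\ref{Iv3})--(\ref{Iv4}) $\Longleftrightarrow$ Eqs.~(\ref{Iv5})--(\ref{Iv6}).} I pair each of Eqs.~(\ref{Iv3}) and (\ref{Iv4}) (both of which are identities in $A$) against a further covector $\eta\in A^{*}$ and rewrite using adjoints. For Eq.~(\ref{Iv3}), the first term gives $\langle R_{\circ}(x)T_r(\zeta),\eta\rangle=\langle T_r(\zeta)\circ x,\eta\rangle=-\langle L_{\circ}^{*}(T_r(\zeta))\eta,x\rangle$, while the second term, via $T_r^{*}=T_{\tau(r)}$, becomes $\langle T_r(L_{\odot}^{*}(x)\zeta),\eta\rangle=\langle L_{\odot}^{*}(x)\zeta,T_{\tau(r)}(\eta)\rangle=\langle R_{\odot}^{*}(T_{\tau(r)}(\eta))\zeta,x\rangle$. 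Non-degeneracy in $x$ then gives Eq.~(\ref{Iv5}). The argument for Eq.~(\ref{Iv4}) $\Longleftrightarrow$ Eq.~(\ref{Iv6}) is analogous, and here the only extra observation is that $x\star y=x\circ y+y\circ x$ is symmetric in $x$ and $y$, so $L_{\star}(x)T_r(\zeta)=L_{\star}(T_r(\zeta))x$; this symmetry is what allows $L_{\star}^{*}(T_r(\zeta))\eta$ to appear naturally on the left-hand side.

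No step is genuinely hard; the exercise is entirely bookkeeping of dual maps and the careful choice, in the first equivalence, of which tensor slot to contract so that $T_r$ rather than $T_{\tau(r)}$ appears in the final identity. The mild trap to avoid is confusing the two conventions in Eq.~(\ref{Iv4}), which forces contraction in the opposite slot compared to Eq.~(\ref{Iv3}); once that is handled, and once the symmetry of $\star$ is invoked for Eq.~(\ref{Iv6}), both equivalences follow by direct dualization.
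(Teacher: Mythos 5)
Your proposal is correct and follows essentially the same route as the paper: the paper establishes the first equivalence by pairing the tensor identities against $\zeta\otimes\eta$ and moving operators to adjoints (which is the same computation as your single-slot contraction), and disposes of the second equivalence "analogously," exactly as you spell out. Your sign bookkeeping, the choice of slot for the $\tau(r)$ identity, and the use of the symmetry of $\star$ to pass from $L_{\star}(x)T_r(\zeta)$ to $L_{\star}(T_r(\zeta))x$ all check out against the paper's conventions.
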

\begin{proof}
For all $~x\in A,\zeta,\eta\in A^{*}$, we have
\begin{align*}&\langle (L_{\odot}(x)\otimes I-I\otimes R_{\circ}(x))r,\zeta\otimes \eta\rangle
=\langle r,\zeta\otimes R_{\circ}^{*}(x)\eta\rangle-\langle r,L_{\odot}^{*}(x)\zeta\otimes\eta\rangle
\\=&\langle T_{r}(\zeta),R_{\circ}^{*}(x)\eta\rangle-\langle T_{r}(L_{\odot}^{*}(x)\zeta), \eta\rangle
=-\langle R_{\circ}(x)T_{r}(\zeta)+T_{r}(L_{\odot}^{*}(x)\zeta), \eta\rangle,\end{align*}
\begin{align*}& \langle (L_{\star}(x)\otimes I-I\otimes R_{\prec}(x))\tau(r),\zeta\otimes \eta\rangle
=\langle \tau(r),\zeta\otimes R_{\prec}^{*}(x)\eta\rangle
-\langle \tau(r),L_{\star}^{*}(x)\zeta\otimes \eta\rangle
\\=&\langle \zeta,T_{r}(R_{\prec}^{*}(x)\eta)\rangle-\langle T_{r}(\eta), L_{\star}^{*}(x)\zeta\rangle
=\langle T_{r}(R_{\prec}^{*}(x)\eta),\zeta\rangle+\langle L_{\star}(x)T_{r}(\eta), \zeta\rangle.
\end{align*}
Thus, Eqs.~(\ref{Iv1})-(\ref{Iv2}) hold if and only if Eqs.~(\ref{Iv3})-(\ref{Iv4}) hold. Analogously,
Eqs.~(\ref{Iv3})-(\ref{Iv4}) hold if and only if Eqs.~(\ref{Iv5})-(\ref{Iv6}) hold.
\end{proof}

\begin{pro} \label{In3}
 Let $(A,\succ,\prec)$ be a Leibniz-dendriform algebra and $r\in A\otimes A$. Then the following
 conditions are equivalent:
  \begin{enumerate}
\item $r+\tau(r)$ is invariant.
\item The following equations hold:
\begin{align}&\label{Iv7}
R_{\circ}(x) T_{r+\tau(r)}(\zeta)=-T_{r+\tau(r)}(L_{\odot}^{*}(x)\zeta),\ \ \
L_{\star}(x) T_{r+\tau(r)}(\zeta)=-T_{r+\tau(r)}(R_{\prec}^{*}(x)\zeta).\end{align}
\item The following equations hold:
\begin{align}&\label{Iv8}
L_{\circ}^{*} (T_{r+\tau(r)}(\zeta))\eta=R_{\odot}^{*}(T_{r+\tau(r)}(\eta))\zeta,
\ \ \ L_{\star}^{*} (T_{r+\tau(r)}(\zeta))\eta=L_{\prec}^{*}(T_{r+\tau(r)}(\eta))\zeta.\end{align}
\end{enumerate}
for all $x\in A,~\zeta,\eta\in A^{*}$
 \end{pro}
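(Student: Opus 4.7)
The plan is to derive Proposition~\ref{In3} as an essentially immediate consequence of Lemma~\ref{In2} by exploiting the symmetry of $\rho := r + \tau(r)$. The key observation is that $\tau(\rho) = \rho$, so by the identity $T_\rho^* = T_{\tau(\rho)}$ recorded after Eq.~(\ref{YE2}) we have $T_{\tau(\rho)} = T_\rho$. Consequently, every appearance of $T_{\tau(r)}$ in Lemma~\ref{In2} collapses to $T_r$ when the lemma is applied to $\rho$, and the three conditions of Lemma~\ref{In2} specialize exactly to the three conditions of Proposition~\ref{In3}.

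More precisely, I would first apply Lemma~\ref{In2} to the element $\rho \in A \otimes A$ in place of $r$. Its first equivalence states that $\rho$ is invariant (i.e.\ Eqs.~(\ref{Iv1})-(\ref{Iv2}) hold with $r$ replaced by $\rho$) if and only if
\begin{align*}
R_\circ(x)\,T_\rho(\zeta) + T_\rho(L_\odot^*(x)\zeta) &= 0,\\
L_\star(x)\,T_\rho(\zeta) + T_\rho(R_\prec^*(x)\zeta) &= 0,
\end{align*}
for all $x\in A,\zeta\in A^*$, which is precisely Eq.~(\ref{Iv7}). This establishes (a) $\Longleftrightarrow$ (b).

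Next, the second equivalence of Lemma~\ref{In2} applied to $\rho$ asserts that these are in turn equivalent to
\begin{align*}
L_\circ^*(T_\rho(\zeta))\eta &= R_\odot^*(T_{\tau(\rho)}(\eta))\zeta,\\
L_\star^*(T_\rho(\zeta))\eta &= L_\prec^*(T_{\tau(\rho)}(\eta))\zeta.
\end{align*}
Substituting $T_{\tau(\rho)} = T_\rho$ yields exactly Eq.~(\ref{Iv8}), giving (b) $\Longleftrightarrow$ (c).

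There is no substantive obstacle: the proof is essentially a corollary that makes visible the role played by symmetry in identifying $T_r$ and $T_{\tau(r)}$. The only point worth checking carefully is that the symmetry hypothesis $\tau(\rho)=\rho$ is what is actually needed to match the two copies $T_\rho$ and $T_{\tau(\rho)}$ on the right-hand sides of Eqs.~(\ref{Iv5})-(\ref{Iv6}), and this is immediate from the definition of $\rho$.
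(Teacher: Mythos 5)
Your proposal is correct and follows exactly the paper's route: the paper's proof of Proposition~\ref{In3} consists of the single line ``It follows from Lemma~\ref{In2}'', which implicitly relies on the same observation you make explicit, namely that $\rho=r+\tau(r)$ satisfies $\tau(\rho)=\rho$ so that $T_{\tau(\rho)}=T_{\rho}$ and the two conditions of Lemma~\ref{In2} specialize to Eqs.~(\ref{Iv7}) and (\ref{Iv8}). Your write-up is a slightly more careful version of the same argument.
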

 \begin{proof} It follows from Lemma \ref{In2}.
\end{proof}
By Eqs.~(\ref{Iv7})-(\ref{Iv8}), we get for all~$x\in A,~\zeta,\eta\in A^{*}$,
\begin{align}&\label{Iv9}L_{\circ}(x) T_{r+\tau(r)}(\zeta)=T_{r+\tau(r)}(L_{\succ}^{*}(x)\zeta),\ \ \
R_{\succ}(x) T_{r+\tau(r)}(\zeta)=T_{r+\tau(r)}(R_{\odot}^{*}(x)\zeta),
\\&\label{Iv10}R_{\circ}^{*} (T_{r+\tau(r)}(\zeta))\eta=-R_{\succ}^{*}(T_{r+\tau(r)}(\eta))\zeta, \ \ \
L_{\succ}^{*} (T_{r+\tau(r)}(\zeta))\eta=-L_{\odot}^{*}(T_{r+\tau(r)}(\eta))\zeta.
\end{align}

\begin{thm}\label{Ya1} Let $(A,\succ,\prec)$ be a Leibniz-dendriform
 algebra and $r=\sum_{i}a_i\otimes b_i\in A\otimes A$.
Assume that $r+\tau(r)$ is invariant. Then the following conditions are equivalent:
\begin{enumerate}
		\item $r$ is a solution of the LD-YBE $S(r)=0$.
\item $r$ is a solution of the equation $S_1(r)=0$.
\item $r$ is a solution of the equation $S_3(r)=0$.
\item $r$ is a solution of the equation $S_4(r)=0$.
\item $r$ is a solution of the equation $S(\tau(r))=r_{32}\circ r_{31}-r_{21}\odot r_{32}-r_{21}\succ r_{31}=0$,
\end{enumerate}
where the notations $S_i(r)~(i=1,2,3,4,5)$ appeared in Proposition \ref{Y4}.
\end{thm}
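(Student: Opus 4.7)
The plan is to translate each of the five conditions into an equivalent identity on the operators $T_r : A^* \to A$ and $T_{\tau(r)} : A^* \to A$ defined by Eq.~\eqref{YE2}, and then use the invariance of $r+\tau(r)$ to show that these five operator identities are mutually equivalent. For (a) and (c), Proposition~\ref{Y4} already provides such translations (parts (a)/(b) for $S(r)=0$ and parts (d)/(e) for $S_3(r)=0$). For (e), I would apply Proposition~\ref{Y4}(b) with $r$ replaced by $\tau(r)$, noting that $T_{\tau(r)}$ now plays the role of $T_r$ after this substitution. For (b) and (d), which involve $S_1(r)$ and $S_4(r)$ and are not covered directly in Proposition~\ref{Y4}, I would carry out the same pairing computation as in its proof: evaluate $\langle \theta\otimes\zeta\otimes\eta,\,S_i(r)\rangle$ and collect terms into operator identities mixing $T_r$ and $T_{\tau(r)}$ with the various dualized actions $L_\circ^*,\,L_\succ^*,\,L_\odot^*,\,L_\star^*,\,L_\prec^*,\,R_\odot^*,\,R_\succ^*,\,R_\circ^*$.

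Having obtained five operator identities, I would then invoke Proposition~\ref{In3} together with its consequences~\eqref{Iv9}--\eqref{Iv10}, all of which hold under the invariance hypothesis on $r+\tau(r)$. These furnish substitution rules such as $L_\circ^*(T_{r+\tau(r)}(\zeta))\eta = R_\odot^*(T_{r+\tau(r)}(\eta))\zeta$ (with analogous pairs for $L_\star^*/L_\prec^*$, $L_\succ^*/L_\odot^*$, and $R_\circ^*/R_\succ^*$), which allow one to rewrite an expression like $L_\circ^*(T_r(\zeta))\eta$ as $R_\odot^*(T_{\tau(r)}(\eta))\zeta$ up to a term absorbed by the right-hand side of the identity at hand. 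Applying these substitutions systematically to the operator form of $S(r)=0$, combined with the swap $\zeta\leftrightarrow\eta$ where needed, converts it into each of the operator forms associated with (b)--(e), yielding the full chain of equivalences.

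The main obstacle I anticipate is the bookkeeping. Each $S_i$ produces an operator identity containing several terms in which $T_r$ and $T_{\tau(r)}$ are interleaved with the dual actions, and the invariance identities come in complementary pairs: one governing the $\succ/\odot$ side and one governing the $\prec/\star$ side. Keeping track of which slot each substitution must act on, and reconciling the $\succ$- and $\prec$-halves of each operator identity simultaneously, is where errors are easiest to make. However, no new conceptual input beyond Propositions~\ref{Y4} and \ref{In3} is required; the argument is a systematic translation followed by invariance-driven rewriting.
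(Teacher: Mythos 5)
Your proposal is correct in outline, but it follows the dual route to the one the paper takes. The paper's proof stays entirely at the tensor level: it verifies the formal identity $S(\tau(r))=-\sigma_{13}S_{3}(r)$, and then shows by direct expansion that $S_{1}(r)-\sigma_{12}S(r)$, $S_{4}(r)-\sigma_{12}S_{3}(r)$ and $S_{3}(r)+\sigma_{123}S_{1}(r)$ are each sums of terms of the shape $(U(a_i)\otimes V(a_i))(r+\tau(r))\otimes b_i$ (plus one term built from $a_i\otimes(r+\tau(r))$), all of which vanish once Definition \ref{In1} is applied to the symmetric tensor $r+\tau(r)$; since the $\sigma$'s are bijective, the five conditions collapse to one. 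Your plan instead pairs each $S_i(r)$ against $\theta\otimes\zeta\otimes\eta$, converts each condition into an identity for $T_r$ and $T_{\tau(r)}$, and passes between these identities using the operator-level consequences of invariance (Proposition \ref{In3} and Eqs.~\eqref{Iv9}--\eqref{Iv10}). The two arguments are dual to each other and both work. What the paper's version buys is that the permutation relations among the $S_i$ --- the non-skew-symmetric analogue of Remark \ref{Y3} --- are exhibited explicitly, and all correction terms are killed in one stroke by Definition \ref{In1}; what your version buys is reuse of Proposition \ref{Y4}, at the cost of two additional pairing computations (for $S_1$ and $S_4$, which that proposition does not cover) and heavier bookkeeping about which covector slot each invariance substitution acts on. No idea beyond Propositions \ref{Y4} and \ref{In3} is needed on your route, so the plan is sound as stated.
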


\begin{proof}
Note that
\begin{align*}&
S(\tau(r))=r_{32}\circ r_{31}-r_{21}\odot r_{32}-r_{21}\succ r_{31}
=-\sigma_{13}S_{3}(r),\\
&S_{1}(r)=r_{12}\odot r_{13}+r_{13}\circ r_{23}+r_{12}\succ r_{23}
\\=&\sigma_{12}S(r)+(r_{12}+r_{21})\odot r_{13}+(r_{12}+r_{21})\succ r_{23}
\\=&\sigma_{12}S(r)+\sum_{i}(R_{\odot}(a_i)\otimes I+I\otimes R_{\succ}(a_i))(r+\tau(r))\otimes b_i,
\\&
S_{4}(r) =-r_{13}\odot r_{12}+r_{12}\circ r_{23}+r_{13}\succ r_{23}
\\=&\sigma_{12}S_{3}(r)+(r_{12}+r_{21})\circ r_{23}-r_{13}\odot(r_{12}+r_{21})
\\=&\sigma_{12}S_{3}(r)+\sum_{i}(I\otimes R_{\circ}(a_i)-L_{\odot}(a_i)\otimes I)(r+\tau(r))\otimes b_i,
\\&
S_{3}(r)=r_{23}\odot r_{12}+r_{23}\succ r_{13}-r_{12}\circ r_{13}
\\=&-\sigma_{123}S_{1}(r)+r_{23}\odot(r_{12}+r_{21})+r_{23}\succ (r_{13}+r_{31})-(r_{12}+r_{21})\circ r_{13}
+r_{21}\circ (r_{13}+r_{31})
\\=&-\sigma_{123}S_{1}(r)+\sum_{i}(I\otimes L_{\odot}(a_i)-R_{\circ}(a_i)\otimes I)(r+\tau(r))\otimes b_i
\\&+(L_{\circ}(b_i)\otimes I\otimes I+I\otimes I\otimes L_{\succ}(b_i))(\tau\otimes I)[a_i\otimes(r+\tau(r))].\end{align*}
Combining Definition \ref{In1}, we get the conclusion.
\end{proof}

\begin{pro} \label{Da1}
Let $(A,\succ,\prec,\Delta_{\succ,r},\Delta_{\prec,r})$ be a coboundary Leibniz-dendriform bialgebra and $r\in A\otimes A$,
where the comultiplications
$\Delta_{\succ,r},\Delta_{\prec,r}$ are given by Eqs.~(\ref{CB1})-(\ref{CB2}).
 Then the Leibniz-dendriform algebra structure $(\succ_r,\prec_r)$ on $A^{*}$ is
given by
\begin{align}&\label{CE1}\zeta\succ_{r}\eta=L_{\circ}^{*}(T_{r}(\zeta))\eta-R_{\odot}^{*}(T_{\tau(r)}(\eta))\zeta,
\\&\label{CE2}\zeta\prec_{r}\eta=L_{\prec}^{*}(T_{\tau(r)}(\zeta))\eta-L_{\star}^{*}(T_{r}(\eta))\zeta.
\end{align}
where $\circ=\succ+\prec,~
 L_{\star}=L_{\circ}+R_{\circ},~R_{\odot}=R_{\succ}+L_{\prec}$.
\end{pro}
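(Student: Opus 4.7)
The plan is to unfold the duality relations \eqref{Da1}--\eqref{Da2} directly and read off the claimed formulas. Since $\zeta \succ_r \eta$ and $\zeta \prec_r \eta$ are completely determined by the identities $\langle x, \zeta \succ_r \eta\rangle = \langle \Delta_{\succ,r}(x), \zeta \otimes \eta\rangle$ and $\langle x, \zeta \prec_r \eta\rangle = \langle \Delta_{\prec,r}(x), \zeta \otimes \eta\rangle$ for all $x \in A$, it suffices to expand the right-hand sides using \eqref{CB1}--\eqref{CB2} and match them against \eqref{CE1}--\eqref{CE2}.

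Write $r = \sum_i a_i \otimes b_i$, so that $T_r(\zeta) = \sum_i \langle a_i,\zeta\rangle b_i$ and $T_{\tau(r)}(\eta) = \sum_i \langle b_i,\eta\rangle a_i$. First I would expand
\[
\Delta_{\succ,r}(x) \;=\; \sum_i (x \odot a_i) \otimes b_i \;-\; \sum_i a_i \otimes (b_i \circ x).
\]
Pairing with $\zeta \otimes \eta$, the two sums factorize: the first collects $\sum_i \langle b_i,\eta\rangle a_i = T_{\tau(r)}(\eta)$ in the right slot to give $\langle x \odot T_{\tau(r)}(\eta), \zeta\rangle$, while the second collects $\sum_i \langle a_i,\zeta\rangle b_i = T_r(\zeta)$ in the left slot to give $\langle T_r(\zeta)\circ x, \eta\rangle$. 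Applying the adjoint rule $\langle f^*(u)w,v\rangle = -\langle w,f(u)v\rangle$ with $f = L_\circ$ and $f = R_\odot$ then rewrites both terms as pairings with $x$, producing precisely \eqref{CE1}.

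The computation for $\prec_r$ is parallel: one expands
\[
\Delta_{\prec,r}(x) \;=\; \sum_i (x \star b_i) \otimes a_i \;-\; \sum_i b_i \otimes (a_i \prec x),
\]
where the $\tau$ built into the definition of $\Delta_{\prec,r}$ systematically swaps the roles of $T_r$ and $T_{\tau(r)}$ relative to the $\succ_r$ case. Using the symmetry $x \star y = y \star x$ and the adjoint identities for $L_\prec^*$ and $L_\star^*$ yields \eqref{CE2}.

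The argument is essentially bookkeeping; the only genuine obstacle is keeping straight which of $T_r$ and $T_{\tau(r)}$ appears in each slot at each step, since the $\tau$ in $\Delta_{\prec,r}$ interchanges the two, and signs must be tracked carefully through each application of the starred operators. Once this is organized, no further appeal is needed beyond the definitions of $T_r$, $\Delta_{\succ,r}$, $\Delta_{\prec,r}$, and the adjoint convention.
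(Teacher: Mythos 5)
Your proposal is correct and follows essentially the same route as the paper: both unfold the duality pairing $\langle x,\zeta\succ_r\eta\rangle=\langle\Delta_{\succ,r}(x),\zeta\otimes\eta\rangle$, identify the resulting sums with $T_r(\zeta)$ and $T_{\tau(r)}(\eta)$, and apply the adjoint convention $\langle f^*(u)w,v\rangle=-\langle w,f(u)v\rangle$ (the paper works at the operator level rather than expanding $r=\sum_i a_i\otimes b_i$ in components, but the computation is the same). Your handling of the $\prec_r$ case, including the use of $x\star y=y\star x$ to move $L_\star$ from $x$ to $T_r(\eta)$, is sound and fills in the part the paper leaves as ``Likewise.''
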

\begin{proof} For all $\zeta,\eta\in A^{*}$ and $x\in A$,
\begin{align*}\langle \zeta\succ_{r}\eta,x\rangle
&=\langle \zeta\otimes\eta,\Delta_{\succ,r}(x)\rangle
=\langle \zeta\otimes\eta,(L_{\odot}(x)\otimes I-I\otimes R_{\circ}(x))r
\rangle
\\&=\langle \zeta\otimes R_{\circ}^{*}(x)\eta,r
\rangle-\langle L_{\odot}^{*}(x)\zeta\otimes\eta, r\rangle
\\&=\langle T_{r}(\zeta),R_{\circ}^{*}(x)\eta
\rangle-\langle T_{\tau(r)}(\eta),L_{\odot}^{*}(x)\zeta\rangle
\\&=-\langle T_{r}(\zeta)\circ x,\eta
\rangle+\langle x\odot T_{\tau(r)}(\eta),\zeta\rangle
\\&=\langle x,L_{\circ}^{*}(T_{r}(\zeta))\eta-R_{\odot}^{*}(T_{\tau(r)}(\eta))\zeta\rangle.
\end{align*}
It yields that Eq.~(\ref{CE1}) holds. Likewise, Eq.~(\ref{CE2}) follows.
\end{proof}

\begin{thm} Let $(A,\succ,\prec)$ be a Leibniz-dendriform algebra and $r=\sum_{i}a_i\otimes b_i\in A\otimes A$.
 Assume that
$\Delta_{\succ,r},\Delta_{\prec,r}$ are given by Eqs.~(\ref{CB1})-(\ref{CB2}). If $r$ is a solution of the
LD-YBE in $(A,\succ,\prec)$ and $r+\tau(r)$ is invariant.
Then $(A,\succ,\prec,\Delta_{\succ,r},\Delta_{\prec,r})$ is a Leibniz-dendriform bialgebra.
\end{thm}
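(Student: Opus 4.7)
My plan is to invoke Theorem \ref{Y1}, which in the coboundary setting reduces the bialgebra axioms to the six compatibility identities \eqref{CB3}-\eqref{CB8}, since \eqref{B4}-\eqref{B6} hold automatically for $\Delta_{\succ,r}, \Delta_{\prec,r}$ of the prescribed form. I will handle these in two groups according to their structure.

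The first group consists of \eqref{CB6}-\eqref{CB8}, whose left-hand sides involve only operators applied to $(r+\tau(r))$. Here the strategy is to invoke Proposition \ref{In3}, which supplies the identities $(L_{\odot}(x)\otimes I)(r+\tau(r)) = (I\otimes R_{\circ}(x))(r+\tau(r))$ and $(L_{\star}(x)\otimes I)(r+\tau(r)) = (I\otimes R_{\prec}(x))(r+\tau(r))$, together with their consequences \eqref{Iv9}-\eqref{Iv10}, to migrate operators between tensor slots and extract common factors. The resulting single-slot operator identities then reduce to consequences of the representation relations \eqref{R10}-\eqref{R11}, notably $R_{\prec}(\cdot)L_{\odot}(\cdot)=0$ and $L_{\prec}(x\odot y)=0$. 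For example, \eqref{CB8} becomes, after one application of the second invariance, $(R_{\prec}(x)L_{\star}(y)\otimes I)(r+\tau(r))=0$, and a further use of the first invariance plus \eqref{R11} isolates the vanishing.

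The second group \eqref{CB3}-\eqref{CB5} is handled by splitting each expression into a principal part and a correction. Using rewritings such as $r_{21}\succ r_{13}=(r+\tau(r))_{12}\succ r_{13}-r_{12}\succ r_{13}$ to replace unsymmetrized triple tensors by $r$-tensors plus symmetric corrections, each of \eqref{CB3}-\eqref{CB5} becomes the sum of (i) a linear combination of $S(r), S_1(r), S_3(r), S_4(r)$ (and their $S_2, S_5$ counterparts) acted on by $L_{\odot}(x), L_{\star}(x), R_{\circ}(x), R_{\prec}(x)$ in the three slots, plus (ii) correction terms involving only $(r+\tau(r))$. Now $S(r)=0$ by hypothesis, and Theorem \ref{Ya1} upgrades this under the invariance of $r+\tau(r)$ to $S_1(r)=S_3(r)=S_4(r)=0$; the identities $S_5(r)=S(r)-S_3(r)+\text{(invariant correction)}$ and $S_2(r)=S_1(r)-S_4(r)+\text{(invariant correction)}$, visible from the rearrangements in the proof of Theorem \ref{Ya1}, then yield $S_2(r)=S_5(r)=0$ modulo further correction terms, and these corrections are in turn killed by the invariance relations.

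The main obstacle is bookkeeping: \eqref{CB3}-\eqref{CB5} are lengthy expressions, and the delicate step is to separate the $S_i$-contributions from the invariance-correction contributions without sign or permutation errors, particularly keeping track of how the operators $\sigma_{12}, \sigma_{13}, \sigma_{132}$ interact with the non-skew parts of $r$. Conceptually, however, no new ingredient beyond Theorem \ref{Ya1}, Proposition \ref{In3}, and the representation identities \eqref{R10}-\eqref{R11} is needed; the argument is a careful but direct verification.
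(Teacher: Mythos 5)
Your proposal is correct and follows essentially the same route as the paper: reduce to Eqs.~\eqref{CB3}--\eqref{CB8} via Theorem \ref{Y1}, decompose each left-hand side into a combination of the $S_i(r)$ (which vanish by the LD-YBE hypothesis together with Theorem \ref{Ya1} and the relations $S_2=S_1-S_4$, $S_5=S-S_3$) plus correction terms in $r+\tau(r)$, and kill the corrections using the invariance identities \eqref{Iv7}--\eqref{Iv10} combined with the representation relations \eqref{R10}--\eqref{R11}. The paper packages the cancellation of the correction terms into one auxiliary identity (its Eq.~\eqref{Iy}), but this is exactly the mechanism you describe.
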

\begin{proof}
Since $r$ is a solution of the
LD-YBE in $(A,\succ,\prec)$ and $r+\tau(r)$ is invariant,
Eqs.~(\ref{Iv7})-(\ref{Iv9}) hold and
\begin{equation*}
S(r)=S_{1}(r)=S_{4}(r)=S_{2}(r)=0.
\end{equation*}
Using Eqs.~(\ref{R10}), (\ref{Iv7}) and (\ref{Iv9}), we have for all $\zeta,\eta\in A^{*}$
\begin{align*}&
\langle \zeta\otimes \eta,
( L_{\odot}(x)R_{\succ}(a_i)\otimes I+R_{\odot}(a_i)\otimes L_{\odot}(x)+L_{\odot}(x\odot a_i)\otimes I-I\otimes R_{\prec}(x\odot a_i))( r+ \tau (r))\rangle
\\=&-\langle R_{\succ}^{*}(a_i)L_{\odot}^{*}(x)\zeta\otimes \eta, r+ \tau (r)\rangle
+\langle R_{\odot}^{*}(a_i)\zeta\otimes L_{\odot}^{*}(x)\eta, r+ \tau (r)\rangle
\\&-\langle L_{\odot}^{*}(x\odot a_i)\zeta\otimes \eta, r+ \tau (r)\rangle
+\langle \zeta\otimes R_{\prec}^{*}(x\odot a_i)\eta, r+ \tau (r)\rangle
\\=&-\langle T_{r+ \tau (r)}(R_{\succ}^{*}(a_i)L_{\odot}^{*}(x)\zeta), \eta\rangle
+\langle T_{r+ \tau (r)}(R_{\odot}^{*}(a_i)\zeta), L_{\odot}^{*}(x)\eta\rangle
\\&-\langle T_{r+ \tau (r)}(L_{\odot}^{*}(x\odot a_i)\zeta), \eta\rangle
+\langle T_{r+ \tau (r)}(\zeta), R_{\prec}^{*}(x\odot a_i)\eta\rangle
\\=&\langle R_{\odot}(a_i)R_{\circ}(x)T_{r+ \tau (r)}(\zeta), \eta\rangle
-\langle L_{\odot}(x)R_{\succ}(a_i)T_{r+ \tau (r)}(\zeta), \eta\rangle
\\&+\langle R_{\circ}(x\odot a_i)T_{r+ \tau (r)}(\zeta), \eta\rangle
-\langle R_{\prec}(x\odot a_i)T_{r+ \tau (r)}(\zeta), \eta\rangle
\\=&0,
\end{align*}
which implies that
\begin{equation} \label{Iy}
(L_{\odot}(x)R_{\succ}(a_i)\otimes I+R_{\odot}(a_i)\otimes L_{\odot}(x)+L_{\odot}(x\odot a_i)\otimes I-I\otimes R_{\prec}(x\odot a_i))( r+ \tau (r))=0.
\end{equation}
Note that
\begin{align*}&
r_{23}\circ r_{13}-r_{12}\odot r_{23}+r_{21}\succ r_{13}
= S(r)+(r_{21}+r_{12})\succ r_{13}
\\=& S(r)+\sum_{i}(R_{\succ}(a_i)\otimes I)(r+\tau(r)) \otimes b_i,
\\&
r_{21}\odot r_{13}-r_{13}\circ r_{23}-r_{12}\succ r_{23}
=-S_{1}(r)+(r_{21}+r_{12})\odot r_{13}
\\=&-S_{1}(r)+\sum_{i}(R_{\odot}(a_i)\otimes I)(r+\tau(r)) \otimes b_i,
\\&
r_{12}\circ r_{23}-r_{13}\odot r_{12}-r_{13}\star r_{21}+r_{21}\prec r_{23}
+r_{13}\circ r_{23}
\\=&S_{4}(r)+S_{2}(r)+(r_{21}+r_{12})\prec r_{23}-(r_{21}+r_{12})\star r_{13}
\\=&S_{4}(r)+S_{2}(r)+\sum_{i}(I\otimes R_{\prec}(a_i)-L_{\star}(a_i)\otimes I)(r+\tau(r)) \otimes b_i.
\end{align*}
Therefore, using Eq.~(\ref{Iy}), we have
\begin{align*}&(L_{\odot}(x)\otimes I \otimes I)(r_{23}\circ r_{13}-r_{12}\odot r_{23}+r_{21}\succ r_{13})
+(I\otimes L_{\odot}(x)\otimes I)(r_{21}\odot r_{13}-r_{13}\circ r_{23}-r_{12}\succ r_{23})
\\&+(I\otimes I\otimes R_{\circ}(x))(r_{12}\circ r_{23}-r_{13}\odot r_{12}-r_{13}\star r_{21}+r_{21}\prec r_{23}
+r_{13}\circ r_{23})
\\&+\sum_{i}(L_{\odot}(x\odot a_i)\otimes I-I\otimes R_{\prec}(x\odot a_i))(r+\tau(r)) \otimes b_i\\=&
(L_{\odot}(x)\otimes I \otimes I)S(r)+\sum_{i}(L_{\odot}(x)R_{\succ}(a_i)\otimes I)(r+\tau(r)) \otimes b_i
-(I\otimes L_{\odot}(x)\otimes I)S_{1}(r)\\&+\sum_{i}(R_{\odot}(a_i)\otimes L_{\odot}(x))(r+\tau(r)) \otimes b_i
+(I\otimes I\otimes R_{\circ}(x))[S_{4}(r)+S_{2}(r)+\sum_{i}(I\otimes R_{\prec}(a_i)\\&-L_{\star}(a_i)\otimes I)(r+\tau(r)) \otimes b_i]
+\sum_{i}(L_{\odot}(x\odot a_i)\otimes I-I\otimes R_{\prec}(x\odot a_i))(r+\tau(r)) \otimes b_i\\=&0,
\end{align*}
that is, Eq.~(\ref{CB3}) holds. Analogously, Eqs.~(\ref{CB4})-(\ref{CB8}) hold.
The proof is finished.
\end{proof}

\begin{defi} \label{Qt1}
 Let $(A,\succ,\prec)$ be a Leibniz-dendriform algebra and $r\in A\otimes A$. If $r$ is a solution of the LD-YBE in $(A,\succ,\prec)$
 and $r+\tau(r)$ is invariant, then the Leibniz-dendriform
  bialgebra $(A, \succ,\prec,\Delta_{\succ,r},\Delta_{\prec,r})$ induced by $r$ is called a {\bf quasi-triangular} Leibniz-dendriform
 bialgebra. In particular, if $r$ is skew-symmetric, $(A, \succ,\prec,\Delta_{\succ,r},\Delta_{\prec,r})$
is called a {\bf triangular} Leibniz-dendriform bialgebra, where $\Delta_{\succ,r}$ and $\Delta_{\prec,r}$
are given by Eqs.~(\ref{CB1})-(\ref{CB2}) respectively.
\end{defi}

\begin{thm}\label{QB0}  Let $(A,\succ,\prec)$ be a Leibniz-dendriform algebra and
$r\in A\otimes A$. Suppose that $r+\tau(r)$ is invariant. Then the following conditions are equivalent:
 \begin{enumerate}
\item $r$ is a solution of the LD-YBE in $(A,\succ,\prec)$.
 \item $(A^{*},\circ_r)$ is a Leibniz algebra and the linear maps
$T_{r},-T_{\tau(r)}$ are both Leibniz algebra
 homomorphisms from $(A^{*},\circ_r)$ to $(A,\circ)$.
 \item $(A^{*},\succ_r,\prec_r)$ is a Leibniz-dendriform algebra and the linear maps
$T_{r},-T_{\tau(r)}$ are both Leibniz-dendriform algebra
 homomorphisms from $(A^{*},\succ_r,\prec_r)$ to $(A,\succ,\prec)$.
 \end{enumerate}
\end{thm}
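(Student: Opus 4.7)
The plan is to establish the cyclic implications $(a) \Rightarrow (c) \Rightarrow (b) \Rightarrow (a)$. The essential tools are Proposition \ref{Y4}, which translates each $S_i(r) = 0$ into an operator identity for $T_r$ or $T_{\tau(r)}$; Theorem \ref{Ya1}, which under invariance of $r+\tau(r)$ makes the conditions $S(r), S_1(r), S_3(r), S_4(r), S(\tau(r)) = 0$ mutually equivalent (hence also $S_2(r) = S_5(r) = 0$ via Remark \ref{Y3}); and Proposition \ref{In3} together with its consequence Eq.~(\ref{Iv8}).

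For $(a) \Rightarrow (c)$, I invoke the preceding theorem to conclude that $(A, \succ, \prec, \Delta_{\succ,r}, \Delta_{\prec,r})$ is a Leibniz-dendriform bialgebra, whence $(A^*, \succ_r, \prec_r)$ from Proposition \ref{Da1} is a Leibniz-dendriform algebra. For the homomorphism properties, I exploit the two equivalent forms of the structure coming from Eq.~(\ref{Iv8}), namely
$$\zeta \succ_r \eta = L_\circ^*(T_r(\zeta))\eta - R_\odot^*(T_{\tau(r)}(\eta))\zeta = R_\odot^*(T_r(\eta))\zeta - L_\circ^*(T_{\tau(r)}(\zeta))\eta.$$
Applying $T_r$ to the second form and invoking Proposition \ref{Y4}(d) with $S_3(r) = 0$ gives $T_r(\zeta \succ_r \eta) = T_r(\zeta) \succ T_r(\eta)$. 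Similarly, items (c), (a), (f) of Proposition \ref{Y4}, each combined with the appropriate rewrite via Eq.~(\ref{Iv8}), yield the identity for $T_r$ on $\prec_r$ and for $-T_{\tau(r)}$ on $\succ_r$ and $\prec_r$ respectively. The step $(c) \Rightarrow (b)$ is immediate: summing $\succ_r$ and $\prec_r$ produces $(A^*, \circ_r)$ as the associated Leibniz algebra, and a Leibniz-dendriform homomorphism automatically descends to a Leibniz homomorphism on the associated structures.

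For $(b) \Rightarrow (a)$, I would expand the homomorphism condition for $T_r$ using Proposition \ref{Da1}, obtaining
$$T_r(\zeta \circ_r \eta) = T_r\bigl(L_\circ^*(T_r(\zeta))\eta + L_\prec^*(T_{\tau(r)}(\zeta))\eta - R_\odot^*(T_{\tau(r)}(\eta))\zeta - L_\star^*(T_r(\eta))\zeta\bigr).$$
The algebraic identity $L_\odot - L_\star = -R_\odot$ (together with $L_\succ + L_\prec = L_\circ$) allows the argument inside $T_r$ to be rewritten as $-L_\odot^*(T_r(\eta))\zeta - L_\succ^*(T_{\tau(r)}(\zeta))\eta$ plus a correction $L_\circ^*(T_{r+\tau(r)}(\zeta))\eta - R_\odot^*(T_{r+\tau(r)}(\eta))\zeta$, which vanishes by Eq.~(\ref{Iv8}). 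Thus the Leibniz homomorphism condition $T_r(\zeta \circ_r \eta) = T_r(\zeta) \circ T_r(\eta)$ reduces to $T_r(\zeta) \circ T_r(\eta) = -T_r(L_\odot^*(T_r(\eta))\zeta + L_\succ^*(T_{\tau(r)}(\zeta))\eta)$, which by Proposition \ref{Y4}(b) is precisely $S(r) = 0$.

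The main obstacle will be the algebraic bookkeeping: the invariance identity Eq.~(\ref{Iv8}) has to be applied in both pairing orders (switching the roles of $\zeta$ and $\eta$), and the seven shorthand operators $L_\succ, L_\prec, R_\succ, R_\prec, L_\odot, L_\star, R_\odot$ have to be juggled carefully via $L_\odot + R_\odot = L_\star$ and $L_\succ + L_\prec = L_\circ$. Once these manipulations are set up, each verification is a short linear calculation, and no structural input beyond Propositions \ref{Y4}, \ref{In3}, \ref{Da1} and Theorem \ref{Ya1} is required.
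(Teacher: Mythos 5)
Your proposal is correct and follows essentially the same route as the paper's proof: both rest on rewriting the dual operations $\succ_r,\prec_r,\circ_r$ from Proposition \ref{Da1} in two equivalent forms via the invariance identity Eq.~(\ref{Iv8}), matching each form against the operator characterizations in Proposition \ref{Y4}, and using Theorem \ref{Ya1} to identify the various conditions $S_i(r)=0$ with $S(r)=0$ (the paper organizes this as the two biconditionals (a)$\Leftrightarrow$(b) and (a)$\Leftrightarrow$(c) rather than your cycle, which is an immaterial difference). One small caveat: Remark \ref{Y3} is stated only for skew-symmetric $r$, so strictly you should note that the identities $S_2(r)=S_1(r)-S_4(r)$ and $S_5(r)=S(r)-S_3(r)$ hold for arbitrary $r$ (a direct check from the notation, and exactly what the paper itself implicitly uses here).
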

\begin{proof}
Since $r + \tau(r)$ is invariant, Eqs.~(\ref{Iv8})-(\ref{Iv10}) hold.
Then, from Eqs.~(\ref{CE1})-(\ref{CE2}), (\ref{Iv8}) and (\ref{Iv10}), we derive
\begin{align*}&\zeta\circ_{r}\eta=\zeta\succ_{r}\eta+\zeta\prec_{r}\eta
\\=&L_{\circ}^{*}(T_{r}(\zeta))\eta-R_{\odot}^{*}(T_{\tau(r)}(\eta))\zeta
+L_{\prec}^{*}(T_{\tau(r)}(\zeta))\eta-L_{\star}^{*}(T_{r}(\eta))\zeta
\\=&L_{\prec}^{*}(T_{r+\tau(r)}(\zeta))\eta+L_{\succ}^{*}(T_{r}(\zeta))\eta
-R_{\odot}^{*}(T_{r+\tau(r)}(\eta))\zeta-L_{\odot}^{*}(T_{r}(\eta))\zeta
\\=&L_{\star}^{*}(T_{r+\tau(r)}(\eta))\zeta+L_{\succ}^{*}(T_{r}(\zeta))\eta
-R_{\odot}^{*}(T_{r+\tau(r)}(\eta))\zeta-L_{\odot}^{*}(T_{r}(\eta))\zeta
\\=&L_{\succ}^{*}(T_{r}(\zeta))\eta+L_{\odot}^{*}(T_{\tau(r)}(\eta))\zeta
\\=&L_{\succ}^{*}(T_{r}(\zeta))\eta+L_{\odot}^{*}(T_{\tau(r)}(\eta))\zeta
-L_{\succ}^{*}(T_{r+\tau(r)}(\zeta))\eta-L_{\odot}^{*}(T_{r+\tau(r)}(\eta))\zeta
\\=&-L_{\succ}^{*}(T_{\tau(r)}(\zeta))\eta-L_{\odot}^{*}(T_{r}(\eta))\zeta,\end{align*}
which yields the identities:
\begin{align*}
\zeta \circ_r \eta = L_{\succ}^{*}(T_r(\zeta))\eta + L_{\odot}^{*}(T_{\tau(r)}(\eta))\zeta
= -L_{\succ}^{*}(T_{\tau(r)}(\zeta))\eta - L_{\odot}^{*}(T_r(\eta))\zeta.
\end{align*}
It now follows from Item (b) and Item (e) of Proposition~\ref{Y4}, Theorem~\ref{Ya1} and Proposition~\ref{Da1}
that Item (a) $\Longleftrightarrow$ Item (b). Observing that $S_{5}(r)=S(r)-S_{3}(r)$.
A similar argument shows that Item (a) $\Longleftrightarrow$ Item (c).
The proof is completed.
\end{proof}

\begin{cor} \label{QB} Let $(A,\succ,\prec,\Delta_{\succ,r},\Delta_{\prec,r})$ be a quasi-triangular Leibniz-dendriform bialgebra.
 Then \begin{enumerate}
\item $T_{r},-T_{\tau(r)}$ are both Leibniz algebra
 homomorphisms from $(A^{*},\circ_r)$ to $(A,\circ)$.
 \item $T_{r},-T_{\tau(r)}$ are Leibniz-dendriform algebra
 homomorphisms from $(A^{*},\succ_r,\prec_r)$ to $(A,\succ,\prec)$.
 \end{enumerate}
\end{cor}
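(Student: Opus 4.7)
The plan is to obtain Corollary \ref{QB} as an immediate consequence of Theorem \ref{QB0} combined with the definition of a quasi-triangular Leibniz-dendriform bialgebra (Definition \ref{Qt1}). There is essentially no new computation to do; the content is already packaged in the equivalence of items (a), (b), (c) of Theorem \ref{QB0}.

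First, I would unpack the hypothesis. By Definition \ref{Qt1}, saying that $(A,\succ,\prec,\Delta_{\succ,r},\Delta_{\prec,r})$ is a quasi-triangular Leibniz-dendriform bialgebra means two things: (i) $r \in A \otimes A$ is a solution of the LD-YBE $S(r) = 0$ in $(A,\succ,\prec)$, and (ii) $r + \tau(r)$ is invariant in the sense of Definition \ref{In1}. In particular, the standing hypothesis of Theorem \ref{QB0} — invariance of $r + \tau(r)$ — is automatically satisfied, and item (a) of that theorem holds.

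Next, I would apply Theorem \ref{QB0} directly. The equivalence (a) $\Longleftrightarrow$ (b) yields that $(A^*, \circ_r)$ is a Leibniz algebra and that both $T_r$ and $-T_{\tau(r)}$ are Leibniz algebra homomorphisms from $(A^*, \circ_r)$ to $(A, \circ)$; this is exactly item~(a) of the corollary. Similarly, the equivalence (a) $\Longleftrightarrow$ (c) yields that $(A^*, \succ_r, \prec_r)$ is a Leibniz-dendriform algebra and that $T_r$ and $-T_{\tau(r)}$ are Leibniz-dendriform algebra homomorphisms from $(A^*, \succ_r, \prec_r)$ to $(A, \succ, \prec)$; this is item~(b) of the corollary.

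There is no real obstacle here: the induced structures $(\succ_r, \prec_r)$ on $A^*$ are exactly those produced by the coboundary comultiplications $\Delta_{\succ,r}, \Delta_{\prec,r}$ via Proposition \ref{Da1}, so the Leibniz-dendriform algebra structure referenced in Theorem \ref{QB0}~(c) coincides with the dual structure built into the bialgebra datum. Consequently, the proof of the corollary amounts to the single sentence: apply Theorem \ref{QB0} to $r$, which satisfies both hypotheses by Definition \ref{Qt1}, and read off items (b) and (c) as items (a) and (b) of the corollary.
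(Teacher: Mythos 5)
Your proposal is correct and matches the paper's own proof, which likewise derives the corollary in one line from Theorem \ref{QB0} together with Definition \ref{Qt1}; you have simply spelled out the same reasoning in more detail.
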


\begin{proof}
It follows by Theorem \ref{QB0} and Definition \ref{Qt1}.
\end{proof}

\subsection{Factorizable Leibniz-dendriform bialgebras}

This section introduces factorizable Leibniz-dendriform bialgebras,
defined as a particular subclass of quasi-triangular Leibniz-dendriform bialgebras.
We then show that the double of any Leibniz-dendriform bialgebra is itself a
 factorizable Leibniz-dendriform bialgebra.

\begin{defi} \label{Qt} A quasi-triangular Leibniz-dendriform
 bialgebra $(A, \succ,\prec, \Delta_{\succ,r}, \Delta_{\prec,r})$ is
called factorizable if
 the linear map $T_{r+\tau(r)}:A^{*}\longrightarrow A$ given by Eq.~(\ref{YE2}) is a linear isomorphism of vector spaces,
where $\Delta_{\succ,r}, \Delta_{\prec,r} $
are defined by Eqs. (\ref{CB1})-(\ref{CB2}).\end{defi}

In view of Item (e) of Theorem \ref{Ya1} and Definition \ref{Qt}, we have the following statement:

\begin{pro} \label{Qf} Assume that $(A, \succ,\prec, \Delta_{\succ,r}, \Delta_{\prec,r})$ is a quasi-triangular
 (factorizable) Leibniz-dendriform bialgebra. Then $(A, \succ,\prec, \Delta_{\succ,\tau(r)}, \Delta_{\prec,\tau(r)})$
 is also a quasi-triangular
 (factorizable) Leibniz-dendriform bialgebra.\end{pro}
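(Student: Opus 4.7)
The plan is to reduce the statement directly to Theorem~\ref{Ya1}(e) together with the definitions of quasi-triangular and factorizable Leibniz-dendriform bialgebras, noting that replacing $r$ by $\tau(r)$ leaves the symmetric part $r+\tau(r)$ unchanged.

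First I would verify the invariance hypothesis for $\tau(r)$. Since
\[
\tau(r) + \tau(\tau(r)) \;=\; \tau(r) + r \;=\; r + \tau(r),
\]
the tensor $\tau(r) + \tau(\tau(r))$ coincides with $r + \tau(r)$ and is therefore invariant by assumption. Hence the hypotheses of Theorem~\ref{Ya1} are available with $r$ replaced by $\tau(r)$.

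Next I would deduce that $\tau(r)$ solves the LD-YBE. By Theorem~\ref{Ya1}(e), under the invariance of $r+\tau(r)$, the condition $S(r)=0$ is equivalent to
\[
S(\tau(r))=r_{32}\circ r_{31}-r_{21}\odot r_{32}-r_{21}\succ r_{31}=0,
\]
which is precisely the statement that $\tau(r)$ is a solution of the LD-YBE (this is just $S$ evaluated at $\tau(r)$). Thus $\tau(r)$ satisfies LD-YBE and has invariant symmetric part, so by Definition~\ref{Qt1} the quintuple $(A,\succ,\prec,\Delta_{\succ,\tau(r)},\Delta_{\prec,\tau(r)})$ is a quasi-triangular Leibniz-dendriform bialgebra.

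Finally, for the factorizable case, I would observe that the associated linear map is
\[
T_{\tau(r)+\tau(\tau(r))} \;=\; T_{\tau(r)+r} \;=\; T_{r+\tau(r)},
\]
which is a linear isomorphism by the factorizability hypothesis on $r$. Hence by Definition~\ref{Qt}, the bialgebra built from $\tau(r)$ is also factorizable. There is no serious obstacle here: everything is a formal manipulation of the symmetry $\tau\circ\tau=\mathrm{id}$ combined with the equivalence $S(r)=0\Leftrightarrow S(\tau(r))=0$ already established in Theorem~\ref{Ya1}; the only thing to double-check is that the definitions are symmetric under $r\mapsto\tau(r)$ once $r+\tau(r)$ is preserved, which they are.
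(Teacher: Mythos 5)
Your proof is correct and follows essentially the same route as the paper, which derives the proposition directly from Item (e) of Theorem~\ref{Ya1} (the equivalence $S(r)=0\Leftrightarrow S(\tau(r))=0$ under invariance of $r+\tau(r)$) together with Definitions~\ref{Qt1} and~\ref{Qt}. Your write-up merely makes explicit the observations that $\tau(r)+\tau(\tau(r))=r+\tau(r)$ and $T_{\tau(r)+r}=T_{r+\tau(r)}$, which the paper leaves implicit.
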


\begin{pro}
Let $(A, \succ,\prec, \Delta_{\succ,r}, \Delta_{\prec,r})$ be a factorizable Leibniz-dendriform bialgebra.
Then $\mathrm{Im}(T_{r}\oplus T_{\tau(r)})$ is a Leibniz-dendriform subalgebra of the direct sum Leibniz-dendriform
algebra $A\oplus A$,
which is isomorphic to the Leibniz-dendriform algebra $(A^{*},\succ_{r},\prec_{r})$. Furthermore, any $x\in A$ has a unique
decomposition $x=x_1-x_2$, where $(x_1,x_2)\in \mathrm{Im}(T_{r}\oplus T_{\tau(r)})$ and
\begin{equation*}T_{r}\oplus T_{\tau(r)}:A^{*}\longrightarrow A\oplus A,~~(T_{r}\oplus T_{\tau(r)})(\zeta)=(T_{r}(\zeta),
 -T_{\tau(r)}(\zeta)),~\forall~\zeta\in A^{*}. \end{equation*}
\end{pro}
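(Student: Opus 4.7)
The plan is to leverage Corollary~\ref{QB} together with the factorizability hypothesis in a direct way. By Corollary~\ref{QB}, both $T_r$ and $-T_{\tau(r)}$ are Leibniz-dendriform algebra homomorphisms from $(A^{*},\succ_r,\prec_r)$ to $(A,\succ,\prec)$. Consequently, the combined map $\phi = T_r \oplus T_{\tau(r)} : A^{*} \to A \oplus A$, $\zeta \mapsto (T_r(\zeta), -T_{\tau(r)}(\zeta))$, is a Leibniz-dendriform algebra homomorphism into the direct sum $A \oplus A$ (which carries the componentwise structure). Its image is therefore automatically a Leibniz-dendriform subalgebra of $A \oplus A$.

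Next I would verify that $\phi$ is injective, which then makes it an isomorphism onto $\mathrm{Im}(\phi)$. If $\phi(\zeta) = 0$, then $T_r(\zeta) = 0$ and $T_{\tau(r)}(\zeta) = 0$, hence $T_{r+\tau(r)}(\zeta) = T_r(\zeta) + T_{\tau(r)}(\zeta) = 0$; since $T_{r+\tau(r)}$ is a linear isomorphism by Definition~\ref{Qt}, this forces $\zeta = 0$. Combining this with the homomorphism property gives the claimed isomorphism $(A^{*},\succ_r,\prec_r) \cong \mathrm{Im}(T_r \oplus T_{\tau(r)})$ of Leibniz-dendriform algebras.

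For the decomposition statement, note that requiring $(x_1,x_2) = \phi(\zeta) = (T_r(\zeta), -T_{\tau(r)}(\zeta))$ for some $\zeta \in A^{*}$ forces
\begin{equation*}
x_1 - x_2 = T_r(\zeta) + T_{\tau(r)}(\zeta) = T_{r+\tau(r)}(\zeta).
\end{equation*}
Since $T_{r+\tau(r)}$ is a linear isomorphism, for every $x \in A$ there is a unique $\zeta \in A^{*}$ with $T_{r+\tau(r)}(\zeta) = x$, and this $\zeta$ determines $(x_1,x_2) = \phi(\zeta)$ uniquely. This proves both existence and uniqueness of the decomposition $x = x_1 - x_2$ with $(x_1,x_2) \in \mathrm{Im}(\phi)$.

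There is no real obstacle here: all nontrivial content has been packaged into Corollary~\ref{QB} (which uses the LD-YBE and invariance of $r+\tau(r)$ to make $T_r$ and $-T_{\tau(r)}$ into Leibniz-dendriform homomorphisms) and Definition~\ref{Qt} (which provides the invertibility of $T_{r+\tau(r)}$). The present proof is essentially a formal assembly of these two facts, so the write-up will consist mainly of spelling out the homomorphism property componentwise and invoking the bijectivity of $T_{r+\tau(r)}$ to extract uniqueness.
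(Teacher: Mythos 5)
Your proposal is correct and follows essentially the same route as the paper: both arguments combine Corollary~\ref{QB} (to make $T_r\oplus T_{\tau(r)}$ a Leibniz-dendriform homomorphism) with the invertibility of $T_{r+\tau(r)}$ from Definition~\ref{Qt} (to get injectivity and the decomposition $x=T_rT_{r+\tau(r)}^{-1}(x)+T_{\tau(r)}T_{r+\tau(r)}^{-1}(x)$). Your write-up is in fact slightly more careful than the paper's on the uniqueness of the decomposition, which the paper leaves implicit.
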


\begin{proof} In view of Definition \ref{Qt} and Corollary \ref{QB}, $T_{r}, -T_{\tau(r)}$ are Leibniz-dendriform algebra
homomorphisms and $T_{r+\tau(r)}$ is a linear isomorphism. It follows that
$T_{r}\oplus T_{\tau(r)}$ is a Leibniz-dendriform algebra homomorphism and $\mathrm{Ker}(T_{r}\oplus T_{\tau(r)})=0$.
Therefore, $\mathrm{Im}(T_{r}\oplus T_{\tau(r)})$ is isomorphic to $(A^{*},\succ_{r},\prec_{r})$ as Leibniz-dendriform algebras.
For all $x\in A$,
\begin{equation*}x=T_{r+\tau(r)}T_{r+\tau(r)}^{-1}(x)=T_{r}T_{r+\tau(r)}^{-1}(x)+T_{\tau(r)}T_{r+\tau(r)}^{-1}(x)=x_1-x_2
,\end{equation*}
where $x_1=T_{r}T_{r+\tau(r)}^{-1}(x)\in \mathrm{Im}(T_{s}),
x_2=-T_{\tau(r)}T_{r+\tau(r)}^{-1}(x)\in \mathrm{Im}(-T_{\tau(r)})$.
The proof is finished.
\end{proof}
\begin{pro}
 Let $(A, \succ,\prec, \Delta_{\succ,r}, \Delta_{\prec,r})$
 be a factorizable Leibniz-dendriform bialgebra. Then the double Leibniz-dendriform algebra $(D=A\oplus A^{*}, \succ_D,\prec_D)$
is isomorphic to the direct sum $A\oplus A$ of Leibniz-dendriform algebras.
\end{pro}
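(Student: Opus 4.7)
The plan is to exhibit an explicit isomorphism. Define
$$\phi: D = A \oplus A^{*} \longrightarrow A \oplus A, \qquad \phi(x+a) = \bigl(x + T_{r}(a),\; x - T_{\tau(r)}(a)\bigr)$$
for all $x \in A$ and $a \in A^{*}$. I would first verify that $\phi$ is a linear isomorphism. If $\phi(x+a) = 0$, then subtracting the two coordinates yields $T_{r+\tau(r)}(a) = 0$; factorizability (Definition \ref{Qt}) forces $a = 0$ and hence $x = 0$. Since $\dim D = 2\dim A = \dim(A \oplus A)$, $\phi$ is bijective, with inverse given explicitly by $(y_{1},y_{2}) \mapsto (y_{1} - T_{r}(a), a)$ where $a = T_{r+\tau(r)}^{-1}(y_{1} - y_{2})$.

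The main step is to verify that $\phi$ intertwines $(\succ_{D}, \prec_{D})$ with the componentwise product on $A \oplus A$. The key inputs are Corollary \ref{QB}, which gives that both $T_{r}$ and $-T_{\tau(r)}$ are Leibniz-dendriform algebra homomorphisms from $(A^{*}, \succ_{r}, \prec_{r})$ to $(A, \succ, \prec)$; the invariance conditions for $r + \tau(r)$ recorded in Eqs.~(\ref{Iv7})-(\ref{Iv10}); and the explicit formulas (\ref{Db1})-(\ref{Db2}) for the double. For the $\succ$-product, expanding $\phi(x+a) \succ \phi(y+b)$ componentwise and using Corollary \ref{QB} to identify the diagonal term $T_{r}(a) \succ_{A} T_{r}(b) = T_{r}(a \succ_{r} b)$ (and analogously for $-T_{\tau(r)}$), I would then compare with $\phi\bigl((x+a) \succ_{D} (y+b)\bigr)$ using Eq.~(\ref{Db1}). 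The three groups of mixed terms — those coupling $x$ with $T_{r}(b)$ and $T_{r}(a)$ with $y$ in the first component, together with the $T_{r}$-image of the coupling actions $L_{\circ}^{*}(x)b$ and $(R_{\succ_{A}}^{*} + L_{\prec_{A}}^{*})(y)a$ — reduce to the required identities precisely by pairing $\langle\,\cdot\,,\,\cdot\,\rangle$ through Eq.~(\ref{YE2}) and invoking Eqs.~(\ref{Iv7})-(\ref{Iv10}). An entirely parallel calculation using Eq.~(\ref{CE2}) and Eq.~(\ref{Db2}) handles the $\prec$-product.

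The main obstacle is bookkeeping rather than conceptual difficulty: each product $(x+a) \succ_{D} (y+b)$ contains six terms (pure $A$-piece, pure $A^{*}$-piece, and four cross-terms between $A$ and $A^{*}$), and after applying $\phi$ and expanding the componentwise product on the other side, roughly a dozen terms must be matched one-by-one. The conceptual content is that the invariance of $r + \tau(r)$ is exactly the compatibility that allows the mixed $A$-$A^{*}$ actions in the double to be absorbed into the diagonal embedding $a \mapsto (T_{r}(a), -T_{\tau(r)}(a))$, turning the double into an honest direct sum. Once this is checked for $\succ$, the case of $\prec$ follows by the same template with $T_{\tau(r)}$ playing a symmetric role via Proposition \ref{Qf}.
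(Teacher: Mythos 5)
Your proposal is correct and follows essentially the same route as the paper: the paper's proof uses exactly the map $\varphi(x,\zeta)=(x+T_{r}(\zeta),\,x-T_{\tau(r)}(\zeta))$ of Eq.~(\ref{FD1}), establishes bijectivity from factorizability, and then verifies the homomorphism property by deriving the absorption identities (\ref{FD2})--(\ref{FD3}) from Eqs.~(\ref{CE1})--(\ref{CE2}) and the invariance relation (\ref{Iv10}), which is precisely the term-matching you describe. The only difference is that the paper carries out the pairing computation explicitly, while you outline it; the key inputs and the structure of the argument coincide.
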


\begin{proof} Since $(A, \succ,\prec, \Delta_{\succ,r}, \Delta_{\prec,r})$
 is a factorizable Leibniz-dendriform bialgebra,
$T_{r+\tau(r)}$ is a linear isomorphism and $r+\tau(r)$ is invariant.
Define $\varphi:A\oplus A^{*}\longrightarrow A\oplus A$ by
\begin{equation}\label{FD1}\varphi(x,\zeta)=(x+T_{r}(\zeta),x-T_{\tau(r)}(\zeta)),~\forall~(x,\zeta)\in A\oplus A^{*}.\end{equation}
Then $\varphi$ is a bijection.
Using Eqs.~(\ref{CE1})-(\ref{CE2}) and (\ref{Iv10}), we have
\begin{align*}&\langle\eta,(R_{\succ_{A^*}}^{*}+L_{\prec_{A^*}}^{*})(\zeta)x+T_{r}(L_{\circ}^{*}(x)\zeta)\rangle
= -\langle \eta\succ_{r} \zeta+\zeta\prec_{r} \eta,x\rangle-\langle x\circ T_{\tau(r)}(\eta),\zeta\rangle
\\=& -\langle L_{\circ}^{*}(T_{r}(\eta))\zeta-R_{\odot}^{*}(T_{\tau(r)}(\zeta))\eta+
L_{\prec}^{*}(T_{\tau(r)}(\zeta))\eta-L_{\star}^{*}(T_{r}(\eta))\zeta,x\rangle
+\langle R_{\circ}^{*}(T_{\tau(r)}(\eta))\zeta,x\rangle
\\=&\langle R_{\circ}^{*}(T_{r+\tau(r)}(\eta))\zeta+ R_{\succ}^{*}(T_{\tau(r)}(\zeta))\eta,x\rangle
\\=&-\langle  R_{\succ}^{*}(T_{r}(\zeta))\eta,x\rangle
\\=&\langle  \eta,x\succ T_{r}(\zeta)\rangle.
\end{align*}
It follows that
\begin{equation}\label{FD2} (R_{\succ_{A^*}}^{*}+L_{\prec_{A^*}}^{*})(\zeta)x+T_{r}(L_{\circ}^{*}(x)\zeta)
=x\succ T_{r}(\zeta).\end{equation}
By the same token,
\begin{equation}\label{FD3}(R_{\succ_{A^*}}^{*}+L_{\prec_{A^*}}^{*})(\zeta)x-T_{\tau(r)}(L_{\circ}^{*}(x)\zeta)
=-x\succ T_{\tau(r)}(\zeta).\end{equation}
Combining Eqs.~(\ref{Db1}) and (\ref{FD1})-(\ref{FD3}),
\begin{align*}&\varphi(x\succ_D \zeta)=\varphi((R_{\succ_{A^*}}^{*}+L_{\prec_{A^*}}^{*})(\zeta)x+L_{\circ}^{*}(x)\zeta)
\\
=&((R_{\succ_{A^*}}^{*}+L_{\prec_{A^*}}^{*})(\zeta)x+T_{r}(L_{\circ}^{*}(x)\zeta),
(R_{\succ_{A^*}}^{*}+L_{\prec_{A^*}}^{*})(\zeta)x-T_{\tau(r)}(L_{\circ}^{*}(x)\zeta))
\\=&(x\succ T_{r}(\zeta),-x\succ T_{\tau(r)}(\zeta))
=(x,x)\succ (T_{r}(\zeta),-T_{\tau(r)}(\zeta))
\\=&\varphi(x)\succ \varphi(\zeta).
\end{align*}
Similarly, we have $\varphi(\zeta\succ_D x)=\varphi(\zeta)\succ \varphi(x)$.
Hence, $\varphi((x,\zeta)\succ_D (y,\eta))=\varphi(x,\zeta)\succ \varphi(y,\eta)$ for
all $(x,\zeta), (y,\eta)\in A\oplus A^{*}$.
An analogous proof shows that $\varphi((x,\zeta)\prec_D (y,\eta))=\varphi(x,\zeta)\prec \varphi(y,\eta)$.
Therefore, $\varphi$ is an isomorphism of Leibniz-dendriform algebras.
The proof is completed.
\end{proof}

\begin{thm} \label{Ft}
Let $(A, \succ,\prec, \Delta_{\succ}, \Delta_{\prec})$ be a Leibniz-dendriform
 bialgebra. Assume that $\{e_1,...,e_n\}$ is a basis of $A$ and $\{e^{*}_1,...,e^{*}_n\}$ is the dual basis.
 Let \begin{equation*}r=\sum_{i=1}^{n}e_{i}\otimes e_{i}^{*}\in A\otimes A^{*}\subseteq D\otimes D.\end{equation*}
Then $(D,\succ_D,\prec_D, \Delta_{\succ,r},\Delta_{\prec,r})$ with
$\Delta_{\succ,r},\Delta_{\prec,r}$ given by Eqs.~(\ref{CB1})-(\ref{CB2})
 is a factorizable Leibniz-dendriform bialgebra.
\end{thm}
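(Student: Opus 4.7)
The plan is to establish the three conditions required for $(D, \succ_D, \prec_D, \Delta_{\succ,r}, \Delta_{\prec,r})$ to be a factorizable Leibniz-dendriform bialgebra (per Definitions \ref{Qt1} and \ref{Qt}): that $r + \tau(r)$ is invariant in $D \otimes D$, that $r$ is a solution of the LD-YBE in $(D, \succ_D, \prec_D)$, and that $T_{r+\tau(r)}: D^* \to D$ is a linear isomorphism.

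First, I would describe $T_r$, $T_{\tau(r)}$, and $T_{r+\tau(r)}$ explicitly. Under the natural identification $D^* \cong A^* \oplus A$ induced by the pairing $\langle x + a, \eta + y\rangle = \langle x, \eta\rangle + \langle a, y\rangle$ for $x, y \in A$ and $a, \eta \in A^*$, a direct unwinding of Eq.~\eqref{YE2} gives $T_r(\eta + y) = \eta$ (regarded as an element of $A^* \subseteq D$), $T_{\tau(r)}(\eta + y) = y$ (regarded as an element of $A \subseteq D$), and therefore $T_{r+\tau(r)}(\eta + y) = y + \eta \in D$. Since $T_{r+\tau(r)}$ is then the canonical swap isomorphism between $D^* \cong A^* \oplus A$ and $D = A \oplus A^*$, the factorizability condition is immediate.

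Next, I would verify that $r + \tau(r)$ is invariant. By Lemma \ref{In2} and Proposition \ref{In3}, this reduces to the operator identities $R_{\circ_D}(d) T_{r+\tau(r)}(\xi) = -T_{r+\tau(r)}(L_{\odot_D}^*(d)\xi)$ and $L_{\star_D}(d) T_{r+\tau(r)}(\xi) = -T_{r+\tau(r)}(R_{\prec_D}^*(d)\xi)$ for all $d \in D$ and $\xi \in D^*$. Using the explicit description of $T_{r+\tau(r)}$ as the swap and pairing both sides with a test element $d' \in D$, these identities transform---via the bilinear form $\omega$ of Eq.~\eqref{C2}, for which $\omega(T_{r+\tau(r)}(\xi), d') = \langle \xi, d'\rangle$---into the invariance condition Eq.~\eqref{C1} for $\omega$. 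That condition holds because, by Theorem \ref{Mp1} applied to the phase-space/Manin-triple structure on $D$ coming from the bialgebra $(A, \succ, \prec, \Delta_\succ, \Delta_\prec)$, $(D, \succ_D, \prec_D, \omega)$ is a quadratic Leibniz-dendriform algebra.

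The main obstacle is showing that $r$ satisfies the LD-YBE in $D$. With the invariance of $r + \tau(r)$ at hand, I would invoke Theorem \ref{QB0}(c): it suffices to establish that $(D^*, \succ_r, \prec_r)$ with the operations from Proposition \ref{Da1} is a Leibniz-dendriform algebra, and that both $T_r$ and $-T_{\tau(r)}$ are Leibniz-dendriform algebra homomorphisms from $(D^*, \succ_r, \prec_r)$ into $(D, \succ_D, \prec_D)$. Since the images of $T_r$ and $T_{\tau(r)}$ are the subalgebras $A^*$ and $A$ of $D$ respectively (from Step 1), and since the dual structure $(\succ_r, \prec_r)$ on $D^*$ is determined by the dual actions $L_{\circ_D}^*, R_{\odot_D}^*, L_{\prec_D}^*, L_{\star_D}^*$ (which in turn come from Eqs.~\eqref{Db1}-\eqref{Db2}), each homomorphism identity decomposes according to the splitting $D^* \cong A^* \oplus A$ into a family of equalities that match the compatibility conditions Eqs.~\eqref{B1}-\eqref{B6} of the Leibniz-dendriform bialgebra $(A, \succ, \prec, \Delta_\succ, \Delta_\prec)$. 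The delicate part---and the principal bookkeeping---is matching each component of the homomorphism identity with the correct bialgebra axiom while keeping track of the various dual pairings and signs; both the subalgebra structure on $A$ and $A^*$ inside $D$ and the invariance established in Step 2 are used throughout. Once these identities are all verified, Theorem \ref{QB0} yields that $r$ satisfies the LD-YBE, completing the proof.
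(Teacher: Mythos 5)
Your proposal is correct in its overall logic and hits the same three checkpoints as the paper (invariance of $r+\tau(r)$, the LD-YBE in $D$, and bijectivity of $T_{r+\tau(r)}$), but it routes the first two differently. The paper verifies both by brute force: it expands $(L_{\odot_D}(x)\otimes I-I\otimes R_{\circ_D}(x))(r+\tau(r))$ and $r_{23}\circ_D r_{13}-r_{12}\odot_D r_{23}-r_{12}\succ_D r_{13}$ using the explicit double multiplication \eqref{Db1}--\eqref{Db2} and cancels terms via the completeness relations $\sum_i\langle x,e_i^*\rangle e_i=x$. You instead derive invariance from the fact that $T_{r+\tau(r)}=(\omega^{\sharp})^{-1}$ for the standard form \eqref{C2} on the double, so that invariance of $r+\tau(r)$ is exactly the statement that $(D,\succ_D,\prec_D,\omega)$ is quadratic (this is the content of Lemma \ref{Fb1} applied to $D$, guaranteed by the Manin-triple equivalence); and you obtain the LD-YBE via the homomorphism criterion of Theorem \ref{QB0}(c). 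Both substitutions are legitimate and conceptually cleaner; what the paper's computation buys is self-containedness and, for the Yang--Baxter step, arguably less work, since the cancellation is formal once \eqref{Db1}--\eqref{Db2} are written down.

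Two caveats. First, your assertion that the homomorphism identities for $T_r$ and $-T_{\tau(r)}$ ``match the compatibility conditions Eqs.~\eqref{B1}--\eqref{B6}'' is imprecise: what those identities actually unwind to are the matched-pair relations of the double (equivalently, the statement that $A$ and $A^*$ are isotropic subalgebras of the quadratic algebra $D$ and that the induced structure $(\succ_r,\prec_r)$ on $D^*$ coincides with the one transported from $\succ_D,\prec_D$ via $\omega^\sharp$); they are consequences of \eqref{B1}--\eqref{B6} through Theorem \ref{Mp2}, not literal restatements, and the bookkeeping needed to confirm them is comparable to the paper's direct expansion. Second, a minor point of conventions: with the pairing \eqref{YE2} one gets $T_{\tau(r)}(\eta+y)=y$ as you say (the paper records $T_{\tau(r)}(\zeta,x)=-x$, an immaterial sign discrepancy); either way $T_{r+\tau(r)}$ is the canonical swap and factorizability follows.
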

\begin{proof}
 We begin by proving that the element $r + \tau(r) = \sum_{i=1}^{n} (e_i \otimes e_i^* + e_i^* \otimes e_i)$ is invariant. Indeed,
by Eqs.~(\ref{Db1}) and (\ref{Db2}), for all $x\in A$, we have
\begin{align*}&(L_{\odot_D}(x)\otimes I-I\otimes R_{\circ_D}(x))\sum_{i}^{n}(e_{i}\otimes e_{i}^{*}+e_{i}^{*}\otimes e_{i})
\\=&\sum_{i}^{n}x\odot_{D}e_{i}\otimes e_{i}^{*}-
e_{i}\otimes e_{i}^{*}\circ_D x+x\odot_{D}e_{i}^{*}\otimes e_{i}
-e_{i}^{*}\otimes e_{i}\circ_{D}x
\\=&\sum_{i,j=1}^{n}x\odot_{D}e_{i}\otimes e_{i}^{*}+
e_{i}\otimes L_{\odot}^{*}(x)e_{i}^{*}-e_{i}\otimes L_{\succ_{A^*}}^{*}(e_{i}^{*})x
+R_{\succ_{A^*}}^{*}(e_{i}^{*})x\otimes e_{i}-R_{\circ}^{*}(x)e_{i}^{*}\otimes e_{i}
-e_{i}^{*}\otimes e_{i}\circ x.
\end{align*}
Note that
\begin{align*}& \sum_{i}^{n}x\odot_{D}e_{i}\otimes e_{i}^{*}+
e_{i}\otimes L_{\odot}^{*}(x)e_{i}^{*}=0,
\\&\sum_{i}^{n}-e_{i}\otimes L_{\succ_{A^*}}^{*}(e_{i}^{*})x
+R_{\succ_{A^*}}^{*}(e_{i}^{*})x\otimes e_{i}=0,
\\&\sum_{i}^{n}R_{\circ}^{*}(x)e_{i}^{*}\otimes e_{i}
+e_{i}^{*}\otimes e_{i}\circ x=0.
\end{align*}
Thus, $(L_{\odot_D}(x)\otimes I-I\otimes R_{\circ_D}(x))(r+\tau(r))=0.$
Similarly, $(L_{\star}(x)\otimes I-I\otimes R_{\prec}(x))(r+\tau(r))=0.$
By duality, we get
\begin{equation*}(L_{\odot_D}(\zeta)\otimes I-I\otimes R_{\circ_D}(\zeta))(r+\tau(r))=0,
\ \ \
(L_{\star}(\zeta)\otimes I-I\otimes R_{\prec}(\zeta))(r+\tau(r))=0\end{equation*}
for all $\zeta\in A^{*}$. This establishes the invariance of $r + \tau(r)$.
We now prove that $r$ satisfies the LD-YBE in $(D,\succ_D,\prec_D)$.
According to Eqs.~(\ref{Db1}) and (\ref{Db2}), we get
\begin{align*}& r_{23}\circ_D r_{13}-r_{12}\odot_D r_{23}-r_{12}\succ_D r_{13}
\\=&\sum_{i,j=1}^{n}e_{j}\otimes e_{i} \otimes e_{i}^{*}\circ_D e_{j}^{*}
-e_{i}\otimes e_{i}^{*}\odot_D e_j\otimes e_{j}^{*}
-e_{i}\succ_D e_{j}\otimes e_{i}^{*}\otimes e_{j}^{*}
\\=&\sum_{i,j=1}^{n}e_{j}\otimes e_{i} \otimes e_{i}^{*}\circ_D e_{j}^{*}
+e_{i}\otimes R_{\circ_{A^*}}^{*}(e_{i}^{*}) e_j\otimes e_{j}^{*}
-e_{i}\otimes R_{\succ}^{*}(e_j)e_{i}^{*}\otimes e_{j}^{*}
-e_{i}\succ_{D} e_{j}\otimes e_{i}^{*}\otimes e_{j}^{*}
=0.\end{align*}
Consequently, $(D, \succ_D, \prec_D, \Delta_{\succ,r}, \Delta_{\prec,r})$ is a quasi-triangular Leibniz-dendriform bialgebra.
Finally, consider the linear maps $T_{r},T_{\tau(r)}:D^{*}\longrightarrow D$ are defined respectively by
$T_{r}(\zeta,x)=\zeta,~T_{\tau(r)}(\zeta,x)=-x$ for all $x\in A,\zeta\in A^{*}$.
Since the map $T_{r+\tau(r)}(\zeta, x) = (\zeta, -x)$ is a linear isomorphism, the bialgebra is in fact factorizable.
\end{proof}

\section{Relative Rota-Baxter operators and quadratic Rota-Baxter Leibniz-dendriform algebras }

\subsection{Relative Rota-Baxter operators and the Leibniz-dendriform Yang-Baxter equation}
This section explores the connection between solutions of the LD-YBE that possess invariant symmetric
parts and relative Rota-Baxter operators of weight defined on Leibniz-dendriform algebras.

\begin{defi} Let $(A,\succ,\prec)$ be a Leibniz-dendriform algebra and
$(V,\succ_V,\prec_V,l_{\succ},r_{\succ},l_{\prec},r_{\prec})$ be an A-Leibniz-dendriform algebra.
A relative Rota-Baxter operator $T$ of weight $\lambda$ on $(A,\succ,\prec)$ associated to
 $(V,\succ_V,\prec_V,l_{\succ},r_{\succ},l_{\prec},r_{\prec})$
   is a linear map $T:V\longrightarrow A$ satisfying
\begin{align*}&T(u)\succ T(v)=T (l_{\succ}(T(u))v+r_{\succ}(T(v))u+\lambda u\succ_V v),\\&
T(u)\prec T(v)=T (l_{\prec}(T(u))v+r_{\prec}(T(v))u+\lambda u\prec_V v),~~\forall~u,v\in V.\end{align*}
When $u\succ_Vv=u\prec_Vv=0$ for all $u,v\in V$, then $T$ is simply a relative Rota-Baxter operator ($\mathcal O$-operator) on
$(A,\succ,\prec)$ associated to a representation $(V,l_{\succ},r_{\succ},l_{\prec},r_{\prec})$.
\end{defi}

\begin{pro}
Let $(A,\succ,\prec)$ be a Leibniz-dendriform algebra and $r\in A\otimes A$ be symmetric and invariant. Define the
multiplications $\succeq_r,\preceq_r:A^{*}\otimes A^{*}\longrightarrow A^{*}$ by
\begin{align}&\label{AD1} \zeta\succeq_r\eta=L_{\circ}^{*}(T_{r}(\zeta))\eta=R_{\odot}^{*}(T_{r}(\eta))\zeta,
 \\&\label{AD2}\zeta\preceq_r\eta=-L_{\prec}^{*}(T_{r}(\zeta))\eta=-L_{\star}^{*}(T_{r}(\eta))\zeta
,~~\forall~\zeta,\eta\in A^{*}.
\end{align}
Then $(A^{*},\succeq_r,\preceq_r,L_{\circ}^{*},R_{\odot}^{*},-L_{\prec}^{*}, -L_{\star}^{*})$ is an A-Leibniz-dendriform algebra
and $(A^{*},\circ_r,L_{\succ}^*,-L_{\odot}^*)$ is an A-Lebniz algebra, where
\begin{equation}\label{AD3} \zeta\circ_r\eta=-L_{\odot}^{*}(T_r(\zeta))\eta=R_{\prec}^{*}(T_r(\eta))\zeta,~~\forall~\zeta,\eta\in A^{*}.
\end{equation}
\end{pro}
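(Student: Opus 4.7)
The plan is to establish (i) well-definedness of the three operations $\succeq_r$, $\preceq_r$, $\circ_r$ (the pairs of expressions in \eqref{AD1}, \eqref{AD2}, \eqref{AD3} agree), (ii) the claimed Leibniz-dendriform/Leibniz algebra structure on $A^{*}$, and (iii) the compatibility conditions making $A^{*}$ into an A-Leibniz-dendriform/A-Leibniz algebra over $(A,\succ,\prec)$ and $(A,\circ)$ respectively.

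First, since $r$ is symmetric we have $r+\tau(r)=2r$ invariant and $T_{r}=T_{\tau(r)}$. Substituting $T_{r+\tau(r)}=2T_{r}$ in Proposition~\ref{In3} and the subsequent Eqs.~\eqref{Iv7}--\eqref{Iv10}, then dividing by $2$, yields $L_{\circ}^{*}(T_{r}(\zeta))\eta=R_{\odot}^{*}(T_{r}(\eta))\zeta$ and $L_{\star}^{*}(T_{r}(\zeta))\eta=L_{\prec}^{*}(T_{r}(\eta))\zeta$ at once, which give the second equalities in \eqref{AD1}--\eqref{AD2} (the $\preceq_{r}$ identity after swapping $\zeta\leftrightarrow\eta$). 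The well-definedness of $\circ_{r}$ requires a separate argument: I pair both sides with an arbitrary $x\in A$, expand $T_{r}(\zeta)\odot x=T_{r}(\zeta)\succ x+x\prec T_{r}(\zeta)$, and reduce the resulting identity in $\zeta\otimes\eta$ to a linear combination of the two invariance equations \eqref{Iv1}--\eqref{Iv2} (using $r=\tau(r)$) together with the Leibniz-dendriform identity \eqref{Ld4}.

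Next, the representation parts of both structures are immediate from Proposition~\ref{Dr}: part~(3) gives that $(A^{*},L_{\circ}^{*},R_{\odot}^{*},-L_{\prec}^{*},-L_{\star}^{*})$ is a representation of $(A,\succ,\prec)$, and part~(5) gives $(A^{*},L_{\succ}^{*},-L_{\odot}^{*})$ as a representation of $(A,\circ)$, so axioms \eqref{R1}--\eqref{R9} hold for free. The key tool for checking that $(A^{*},\succeq_{r},\preceq_{r})$ satisfies \eqref{Ld1}--\eqref{Ld3} and that $(A^{*},\circ_{r})$ satisfies the Leibniz axiom is the transfer of products via $T_{r}$: setting $x=T_{r}(\zeta)$ in \eqref{Iv3}--\eqref{Iv4} produces $T_{r}(L_{\odot}^{*}(T_{r}(\zeta))\eta)=-T_{r}(\eta)\circ T_{r}(\zeta)$ and $T_{r}(R_{\prec}^{*}(T_{r}(\zeta))\eta)=-T_{r}(\zeta)\star T_{r}(\eta)$, with analogous identities for $L_{\succ}^{*}$ and $L_{\circ}^{*}$ coming from \eqref{Iv9}. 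Combining these transfer identities with the representation axioms reduces each axiom on $A^{*}$ to an identity already known on $(A,\succ,\prec)$.

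Finally, the eight compatibility conditions listed in Remark~\ref{La}(1) (A-Leibniz-dendriform case) and the analogous smaller set obtained from Proposition~\ref{d1} with trivial reverse action (A-Leibniz case) are verified by direct computation. Each such condition is an identity on $A^{*}$; I pair both sides with an arbitrary $x\in A$ and select, for each occurrence of $\succeq_{r}$, $\preceq_{r}$, or $\circ_{r}$, whichever of its two equivalent expressions triggers the invariance relation needed to collapse the expression. The main obstacle is not conceptual but combinatorial: each compatibility condition unfolds into a different combination of the operators $L_{\odot}^{*},L_{\star}^{*},R_{\odot}^{*},R_{\prec}^{*},L_{\succ}^{*},L_{\prec}^{*}$, and tracking signs and choosing at each substitution the correct equivalent form demands careful bookkeeping; however, no ingredient beyond invariance, Proposition~\ref{Dr}, and the Leibniz-dendriform axioms for $(A,\succ,\prec)$ is required.
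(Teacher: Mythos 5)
Your strategy coincides with the paper's: use the invariance relations to (i) identify the paired expressions in \eqref{AD1}--\eqref{AD2} (via \eqref{Iv5}--\eqref{Iv6} with $T_{\tau(r)}=T_r$; note that since $r$ itself is assumed invariant you can apply Lemma \ref{In2} directly and avoid dividing by $2$), (ii) obtain transfer identities of the form $T_r(\text{dual action of }T_r(\zeta)\text{ on }\eta)=\pm(\text{product of }T_r(\zeta),\,T_r(\eta)\text{ in }A)$, and then (iii) verify \eqref{Ld1}--\eqref{Ld3} and the compatibility list of Remark \ref{La} case by case using Proposition \ref{Dr}. The paper displays only the \eqref{Ld1} check for $\succeq_r$ and $\circ_r$ and declares the rest analogous, so on points (i)--(iii) you are on the same track.

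The gap is your treatment of $\circ_r$. You assert that the printed equality $-L_{\odot}^{*}(T_r(\zeta))\eta=R_{\prec}^{*}(T_r(\eta))\zeta$ reduces, after pairing with $x\in A$, to the invariance equations together with \eqref{Ld4}; it does not. Applying $T_r$ to the two sides via \eqref{Iv3} and \eqref{Iv4} gives $T_r(\eta)\circ T_r(\zeta)$ and $-\bigl(T_r(\eta)\circ T_r(\zeta)+T_r(\zeta)\circ T_r(\eta)\bigr)$ respectively, which already differ; more abstractly, writing $A=L_{\succ}^{*}(T_r(\zeta))\eta$, $D=R_{\prec}^{*}(T_r(\zeta))\eta$ and primes for swapped arguments, the invariance relations yield $D=D'$ and $A+A'+D=0$ but nothing forcing $A'=D$, and imposing $A'=D$ would force $3A=0$. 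What invariance actually provides is the second identity of \eqref{Iv10}: $L_{\succ}^{*}(T_r(\zeta))\eta=-L_{\odot}^{*}(T_r(\eta))\zeta$ (arguments in the opposite positions, and $L_{\succ}^{*}$ rather than $R_{\prec}^{*}$); this is precisely $\zeta\succeq_r\eta+\zeta\preceq_r\eta$ and is the form that transfers under $T_r$ to $T_r(\zeta)\circ T_r(\eta)$, making the \eqref{Ld1}-type computation close via \eqref{Lr1}. Equation \eqref{AD3} as printed appears to be a misprint, and your later policy of freely selecting whichever of the two ``equivalent'' expressions of $\circ_r$ is convenient is therefore unsound: with the $R_{\prec}^{*}$ form the third term in the \eqref{Ld1} check becomes $L_{\circ}^{*}(T_r(\zeta)\star T_r(\eta))\theta$, which vanishes by \eqref{Lr1} and leaves an uncancelled commutator. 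You need to correct the definition of $\circ_r$ to $\zeta\circ_r\eta=L_{\succ}^{*}(T_r(\zeta))\eta=-L_{\odot}^{*}(T_r(\eta))\zeta$ before the remaining verifications can be carried out.
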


\begin{proof} Using Eqs.~(\ref{Lr1}), (\ref{Iv7}) and (\ref{AD1})-(\ref{AD2}),
we find for all $\zeta, \eta, \theta \in A^{}$ that
\begin{align*} &\zeta\succeq_r(\eta\succeq_r \theta)-\eta\succeq_r(\zeta\succeq_r \theta)-(\zeta\circ_r\eta)\succeq_r \theta
 \\=&L_{\circ}^{*}(T_{r}(\zeta))L_{\circ}^{*}(T_{r}(\eta))\theta-L_{\circ}^{*}(T_{r}(\eta))L_{\circ}^{*}(T_{r}(\zeta))
 -L_{\circ}^{*}(T_{r}(R_{\prec}^{*}(T_{r}(\eta))\zeta))
 \\=&L_{\circ}^{*}(T_{r}(\zeta))L_{\circ}^{*}(T_{r}(\eta))\theta-L_{\circ}^{*}(T_{r}(\eta))L_{\circ}^{*}(T_{r}(\zeta))\theta
 +L_{\circ}^{*}(T_{r}(\zeta)\star T_{r}(\eta))\theta
\\=&0,\end{align*}
which implies that Eq.~(\ref{Ld1}) holds for $\succeq_r$ and $\circ_r$.
The verification of Eqs.~(\ref{Ld2})-(\ref{Ld3}) and the identities in Remark \ref{La}
for $\succeq_r$, $\preceq_r$ and $\circ_r$ is similar, relying on
 Eqs.~(\ref{R1})-(\ref{R9}), (\ref{Iv6})-(\ref{Iv10}) and (\ref{AD1})-(\ref{AD3}). Thus, the proof is completed.

\end{proof}

The following result ia a generalization of Theorem \ref{YE3}.
\begin{thm} \label{Al}
Let $(A,\succ,\prec)$ be a Leibniz-dendriform algebra and $r\in A\otimes A$. Assume that $r+\tau(r)$ is invariant.
Then the following
conditions are equivalent.
\begin{enumerate}
 \item $r$ is a solution of the LD-YBE in $(A,\succ,\prec)$
 such that $(A,\succ,\prec,\Delta_{\succ,r},\Delta_{\prec,r})$
with $\Delta_{\succ,r},\Delta_{\prec,r}$ given by Eqs.~(\ref{CB1})-(\ref{CB2}) is a quasi-triangular Leibniz-dendriform bialgebra.
\item $T_r$ is a relative Rota-Baxter operator of weight $-1$ on $(A,\succ,\prec)$
 with respect to the A-Leibniz-dendriform algebra
  $(A^{*},\succeq_{r+\tau(r)},\preceq_{r+\tau(r)},L_{\circ}^{*},R_{\odot}^{*},-L_{\prec}^{*}, -L_{\star}^{*})$,
 that is,
 \begin{align}\label{AD5}&T_{r}(\zeta)\prec T_{r}(\eta)=T_{r}(-L_{\prec}^{*}(T_{r}(\zeta))\eta
-L_{\star}^{*}(T_{r}(\eta))\zeta-\zeta\preceq_{r+\tau(r)}\eta), \\&
 \label{AD6}T_{r}(\zeta)\succ T_{r}(\eta)=T_{r}(L_{\circ}^*(T_{r}(\zeta))\eta+R_{\odot}^*(T_{r}(\eta))\zeta
-\zeta\succeq_{r+\tau(r)}\eta).
 \end{align}
\item $T_r$ is a relative Rota–Baxter operator of weight $-1$ on $(A,\circ)$
 with respect to the A-Leibniz algebra $(A^{*},\circ_{r+\tau(r)},L_{\succ}^*,-L_{\odot}^*)$, that is,
 \begin{align}\label{AD8} &
 T_{r}(\zeta)\circ T_{r}(\eta)=T_{r}(L_{\succ}^*(T_{r}(\zeta))\eta-L_{\odot}^*(T_{r}(\eta))\zeta-\zeta\circ_{r+\tau(r)}\eta),
 \end{align}
  \end{enumerate}
for all $\zeta,\eta\in A^{*}$, where $\succeq_{r+\tau(r)},~\preceq_{r+\tau(r)}$ and $\circ_{r+\tau(r)}$ are given respectively by Eqs.~(\ref{AD1})-(\ref{AD3})
\end{thm}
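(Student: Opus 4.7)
My strategy is to translate the LD-YBE into operator identities for $T_r$ and $T_{\tau(r)}$ via Proposition~\ref{Y4}, and then to absorb the contribution of $T_{\tau(r)}$ into a weight term by writing $T_{\tau(r)} = T_{r+\tau(r)} - T_r$ and applying the invariance identities of Proposition~\ref{In3}. The hypothesis that $r + \tau(r)$ is invariant plays two roles: by Theorem~\ref{Ya1}, the equations $S(r) = 0$, $S_1(r) = 0$, $S_3(r) = 0$, $S_4(r) = 0$ (hence also $S_2(r) = S_1(r) - S_4(r) = 0$) all collapse to a single condition, so (a) may be characterized by whichever of these best suits the identity I am trying to obtain; moreover, the symmetry and invariance of $r + \tau(r)$ force the two alternative expressions in Eqs.~(\ref{AD1})--(\ref{AD3}) to agree, so $\succeq_{r+\tau(r)}$, $\preceq_{r+\tau(r)}$, $\circ_{r+\tau(r)}$ are unambiguously defined and the A-Leibniz(-dendriform) structure on $A^{*}$ is well posed.

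For (a) $\Longleftrightarrow$ (b), I apply Proposition~\ref{Y4}(d) to $S_3(r) = 0$, namely $T_r(\zeta) \succ T_r(\eta) = T_r(R_\odot^{*}(T_r(\eta))\zeta - L_\circ^{*}(T_{\tau(r)}(\zeta))\eta)$, substitute $T_{\tau(r)} = T_{r+\tau(r)} - T_r$, and identify $L_\circ^{*}(T_{r+\tau(r)}(\zeta))\eta$ with $\zeta \succeq_{r+\tau(r)} \eta$ via Eq.~(\ref{AD1}); this produces Eq.~(\ref{AD6}). Applying the same recipe to Proposition~\ref{Y4}(c) for $S_2(r) = 0$, combined with $L_\star^{*}(T_{r+\tau(r)}(\eta))\zeta = -\zeta \preceq_{r+\tau(r)} \eta$ from Eq.~(\ref{AD2}), yields Eq.~(\ref{AD5}); since $S_2(r) = 0$ and $S_3(r) = 0$ are together equivalent to $S(r) = 0$ under the invariance of $r + \tau(r)$ (and the latter then produces the coboundary bialgebra of (a) by the result preceding Definition~\ref{Qt1}), this gives the full equivalence (a) $\Longleftrightarrow$ (b). For (a) $\Longleftrightarrow$ (c), summing the $\succ$ and $\prec$ identities of (b) (using $L_\circ^{*} - L_\prec^{*} = L_\succ^{*}$ and $R_\odot^{*} - L_\star^{*} = -L_\odot^{*}$) produces Eq.~(\ref{AD8}), giving (b) $\Rightarrow$ (c); for the converse (c) $\Rightarrow$ (a), I invoke Proposition~\ref{Y4}(b) $T_r(\zeta) \circ T_r(\eta) = -T_r(L_\odot^{*}(T_r(\eta))\zeta + L_\succ^{*}(T_{\tau(r)}(\zeta))\eta)$ with the same substitution, and recognise the residual term $L_\succ^{*}(T_{r+\tau(r)}(\zeta))\eta$ as $\circ_{r+\tau(r)}$ by combining Eqs.~(\ref{Iv10}) and (\ref{AD3}); matching the two forms then forces $S(r) = 0$.

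The main obstacle will be the careful bookkeeping of the arguments $\zeta$ and $\eta$: the invariance relations (\ref{Iv7})--(\ref{Iv10}) and the two equivalent forms in (\ref{AD1})--(\ref{AD3}) must be applied with the correct pairing, and this relies crucially on the self-adjointness of $T_{r+\tau(r)}$ that follows from the symmetry of $r + \tau(r)$. Once the identification between the operator-level remainder produced by $T_{\tau(r)} = T_{r+\tau(r)} - T_r$ and the weight term $\zeta \circ_{r+\tau(r)} \eta$ (respectively $\zeta \succeq_{r+\tau(r)} \eta$, $\zeta \preceq_{r+\tau(r)} \eta$) has been pinned down in each case, the remaining verifications are routine linear-algebra manipulations.
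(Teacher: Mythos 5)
Your proposal is correct and follows essentially the same route as the paper: both arguments rest on Proposition~\ref{Y4} (items (b), (c), (d)) together with Theorem~\ref{Ya1} to pass freely between $S(r)=0$, $S_2(r)=0$ and $S_3(r)=0$, and both obtain the weight $-1$ term by writing $T_{\tau(r)}=T_{r+\tau(r)}-T_r$ and identifying the residual $L_{\succ}^{*}(T_{r+\tau(r)}(\zeta))\eta$ (resp.\ the $L_{\circ}^{*}$, $L_{\star}^{*}$ residuals) with $\circ_{r+\tau(r)}$ (resp.\ $\succeq_{r+\tau(r)}$, $\preceq_{r+\tau(r)}$) via Eqs.~(\ref{Iv10}) and (\ref{AD1})--(\ref{AD3}). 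The only cosmetic difference is that you reach (c) by summing the two identities of (b) and then close the loop with Proposition~\ref{Y4}(b), whereas the paper proves (a)~$\Longleftrightarrow$~(c) directly from Proposition~\ref{Y4}(b) and handles (a)~$\Longleftrightarrow$~(b) separately.
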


 \begin{proof}
 In view of Item (b) of Proposition \ref{Y4} and Theorem \ref{Ya1}, if $r+\tau(r)$ is invariant, then
 $r$ is a solution of the LD-YBE in $(A,\succ,\prec)$ if and only if
 \begin{equation*}T_{r}(\zeta)\circ T_{r}(\eta)=T_{r}(-L_{\succ}^*(T_{\tau(r)}(\zeta))\eta-L_{\odot}^*(T_{r}(\eta))\zeta).\end{equation*}
 Observe that
\begin{align*}T_{r}(\zeta)\circ T_{r}(\eta)&=T_{r}(-L_{\succ}^*(T_{\tau(r)}(\zeta))\eta-L_{\odot}^*(T_{r}(\eta))\zeta)
\\&=T_{r}(L_{\succ}^*(T_{r}(\zeta))\eta-L_{\odot}^*(T_{r}(\eta))\zeta-L_{\succ}^*(T_{r+\tau(r)}(\zeta))\eta)
\\&=T_{r}(L_{\succ}^*(T_{r}(\zeta))\eta-L_{\odot}^*(T_{r}(\eta))\zeta-\zeta\circ_{r+\tau(r)}\eta).\end{align*}
Thus, Item (a) $\Longleftrightarrow$ Item (c).
Similarly, combining Item (c) and Item (d) of Proposition \ref{Y4} and Theorem \ref{Ya1}, we get that
Item (a) $\Longleftrightarrow$ Item (b).
  \end{proof}
 If $r\in A\otimes A$ is skew-symmetric, then $r+\tau(r)=0$. Thus, Theorem \ref{YE3} is obtained.

\subsection{Quadratic Rota-Baxter Leibniz-dendriform algebras and factorizable Leibniz-dendriform bialgebras}

In this section, we first introduce a notion of quadratic Rota-Baxter Leibniz-dendriform algebras.
Then we characterize the relationship between factorizable Leibniz-dendriform
bialgebras and quadratic Rota-Baxter Leibniz-dendriform algebras.

\begin{defi} Let $(A,\succ,\prec,P)$ be a Rota-Baxter Leibniz-dendriform algebra of weight $\lambda$
and $(A,\succ,\prec,\omega)$ a quadratic Leibniz-dendriform algebra. Then $(A,\succ,\prec,P,\omega)$
is called a \textbf{quadratic Rota-Baxter Leibniz-dendriform algebra of weight $\lambda$} if the following condition holds:
\begin{equation} \label{Fs}\omega (P(x),y)+\omega(x, P(y))+\lambda\omega(x,y)=0, ~\forall~x, y \in A.\end{equation}
\end{defi}

\begin{defi} Let $(A,\circ,P)$ be a Rota-Baxter Leibniz algebra of weight $\lambda$ and
 $(A,\circ,\omega )$ be a symplectic Leibniz algebra. In addition, if the following condition holds:
 \begin{equation} \label{Fs1}\omega (P(x),y)+\omega(x, P(y))+\lambda\omega(x,y)=0, ~\forall~x, y \in A.\end{equation}
Then $(A,\circ ,P,\omega)$ is called a \textbf{ Rota-Baxter symplectic Leibniz algebra of
weight $\lambda$}.
\end{defi}

Since  \begin{align*} &\lambda\omega(x,y)+\omega (-\lambda(x)- P(x),y)+\omega(x, -\lambda(y)- P(y))\\=&
-\lambda\omega(x,y)-\omega (P(x),y)-\omega(x,  P(y)), ~\forall~x, y \in A,\end{align*}
the following conclusions are clear.

\begin{pro} \label{Fb2} Let $(A,\succ,\prec,\omega)$ be a quadratic Leibniz-dendriform algebra and let $P:A\longrightarrow A$ be a linear
map. Then $(A,\succ,\prec,P,\omega)$ is a quadratic Rota-Baxter Leibniz-dendriform algebra of weight $\lambda$ if and only if
$(A,\succ,\prec,-\lambda I-P,\omega)$ is a quadratic Rota-Baxter Leibniz-dendriform algebra of weight $\lambda$.
\end{pro}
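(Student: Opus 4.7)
The plan is to decompose the biconditional into its two independent ingredients, since being a quadratic Rota-Baxter Leibniz-dendriform algebra of weight $\lambda$ requires (i) the Rota-Baxter identities of weight $\lambda$ for $\succ$ and $\prec$, and (ii) the compatibility relation \eqref{Fs} with the quadratic form $\omega$. The quadratic structure $(A,\succ,\prec,\omega)$ itself is unchanged when we replace $P$ by $\tilde P := -\lambda I - P$, so we only need to verify that each of (i) and (ii) is preserved under this replacement.

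For (i), the paper has already remarked (just after the definition of Rota-Baxter Leibniz-dendriform algebras of weight $\lambda$) that $(A,\succ,\prec,P)$ is a weight-$\lambda$ Rota-Baxter Leibniz-dendriform algebra if and only if $(A,\succ,\prec,\tilde P)$ is. So for the first ingredient I would simply invoke this observation (which is a direct substitution into the two weight-$\lambda$ Rota-Baxter identities for $\succ$ and $\prec$, expanding $\tilde P(x)=-\lambda x-P(x)$ and using bilinearity).

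For (ii), I would use exactly the one-line computation displayed immediately before the proposition: substituting $\tilde P = -\lambda I - P$ into the left-hand side of \eqref{Fs} yields
\begin{equation*}
\omega(\tilde P(x),y)+\omega(x,\tilde P(y))+\lambda\omega(x,y) = -\bigl(\omega(P(x),y)+\omega(x,P(y))+\lambda\omega(x,y)\bigr),
\end{equation*}
so \eqref{Fs} holds for $P$ if and only if it holds for $\tilde P$. Combining (i) and (ii) gives the claimed equivalence.

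There is really no serious obstacle here: the argument is a direct two-line verification, and the content of the proposition is simply that the transformation $P\mapsto -\lambda I - P$ is an involution on the set of weight-$\lambda$ quadratic Rota-Baxter Leibniz-dendriform structures on $(A,\succ,\prec,\omega)$. The only minor point worth highlighting is that this symmetry is precisely the one used to pass between a Rota-Baxter operator and its "opposite," which will be useful later when relating factorizable bialgebras to such structures.
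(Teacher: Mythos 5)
Your proposal is correct and follows essentially the same route as the paper: the paper's own justification consists of the displayed identity showing that the left-hand side of Eq.~\eqref{Fs} changes sign under $P\mapsto-\lambda I-P$, combined with the earlier remark that the weight-$\lambda$ Rota-Baxter identities are preserved under the same substitution. Nothing is missing.
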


\begin{pro} Let $(A,\circ,\omega)$ be a symplectic Leibniz algebra and let $P:A\longrightarrow A$ be a linear
map. Then $(A,\circ ,P,\omega)$ is a Rota-Baxter symplectic Leibniz algebra of weight $\lambda$ if and only if
$(A,\circ ,-\lambda I-P,\omega)$ is a Rota-Baxter symplectic Leibniz algebra of weight $\lambda$.
\end{pro}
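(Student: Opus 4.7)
The plan is to verify that both defining conditions of a Rota-Baxter symplectic Leibniz algebra of weight $\lambda$ are preserved under the involution $P \mapsto \widetilde{P} := -\lambda I - P$. Since this map is of order two (indeed $-\lambda I - \widetilde{P} = P$), it suffices to prove just one direction of the equivalence; the other follows by applying the same argument to $\widetilde{P}$ in place of $P$.

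For the first axiom, I would show that $\widetilde{P}$ is a Rota-Baxter operator of weight $\lambda$ on the Leibniz algebra $(A,\circ)$ whenever $P$ is. This is a standard invariance of the weight-$\lambda$ Rota-Baxter identity: substituting $\widetilde{P} = -\lambda I - P$ into both sides of
\[
\widetilde{P}(x) \circ \widetilde{P}(y) = \widetilde{P}\bigl(\widetilde{P}(x) \circ y + x \circ \widetilde{P}(y) + \lambda\, x \circ y\bigr)
\]
and expanding via the bilinearity of $\circ$, the $\lambda^{2}$-terms cancel and the remaining terms rearrange into the Rota-Baxter identity for $P$. The same calculation underlies the analogous remark for Leibniz-dendriform algebras recorded right after Example~\ref{Eo}, and transfers verbatim to the Leibniz setting.

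For the second axiom, the compatibility Eq.~(\ref{Fs1}) for $\widetilde{P}$ is immediate from the identity displayed just before the statement, namely
\[
\omega(\widetilde{P}(x), y) + \omega(x, \widetilde{P}(y)) + \lambda \omega(x, y) = -\bigl(\omega(P(x), y) + \omega(x, P(y)) + \lambda \omega(x, y)\bigr),
\]
so the left-hand side vanishes for $\widetilde{P}$ if and only if it vanishes for $P$.

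I do not anticipate any substantial obstacle: the result is a direct consequence of the symmetry of both defining conditions under the involution $P \leftrightarrow -\lambda I - P$, and the argument is entirely parallel to the proof of Proposition~\ref{Fb2}, with $(\succ,\prec)$ replaced by $\circ$ and the quadratic condition replaced by the symplectic one.
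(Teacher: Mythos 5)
Your proposal is correct and follows essentially the same route as the paper: the paper justifies both this proposition and Proposition~\ref{Fb2} by the single displayed sign identity $\lambda\omega(x,y)+\omega(\widetilde{P}(x),y)+\omega(x,\widetilde{P}(y))=-\bigl(\lambda\omega(x,y)+\omega(P(x),y)+\omega(x,P(y))\bigr)$, taking the invariance of the weight-$\lambda$ Rota-Baxter identity under $P\mapsto-\lambda I-P$ as already established. Your additional remarks (the involutive nature of the map, hence only one direction needs checking, and the explicit expansion of the Rota-Baxter identity) merely fill in details the paper leaves implicit.
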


\begin{thm}\label{Fb0} Suppose that $(A,\circ,P,\omega)$ is a Rota-Baxter symplectic Leibniz algebra of
weight $\lambda$. Then $(A,\succ,\prec,P,\omega)$ is a quadratic Rota-Baxter Leibniz-dendriform
algebra of weight $\lambda$, where $\succ,\prec$ are defined by Eq.~(\ref{C1}).
 On the other hand, let $(A,\succ,\prec,P,\omega)$ be a quadratic Rota-Baxter Leibniz-dendriform algebra of weight $\lambda$.
Then $(A,\circ,P,\omega)$ is a Rota-Baxter symplectic Leibniz algebra of weight $\lambda$, where $\circ=\succ+\prec$.
\end{thm}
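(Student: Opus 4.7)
The plan rests on Theorem~\ref{Sq}, which establishes a bijection between symplectic Leibniz algebras $(A,\circ,\omega)$ and quadratic Leibniz-dendriform algebras $(A,\succ,\prec,\omega)$ via the invariance relations of Eq.~(\ref{C1}), with $\circ=\succ+\prec$. Since the compatibility conditions Eq.~(\ref{Fs}) and Eq.~(\ref{Fs1}) between $P$ and $\omega$ are verbatim the same relation, the entire theorem reduces to the following equivalence under this correspondence: $P$ is a Rota-Baxter operator of weight $\lambda$ on the Leibniz algebra $(A,\circ)$ if and only if $P$ is a Rota-Baxter operator of weight $\lambda$ on the Leibniz-dendriform algebra $(A,\succ,\prec)$.

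The direction from Leibniz-dendriform to Leibniz is immediate: adding the two Rota-Baxter identities for $\succ$ and $\prec$ and using $\circ=\succ+\prec$ produces the Rota-Baxter identity for $\circ$ of weight $\lambda$ on the nose, while the symplectic structure on $(A,\circ)$ is supplied by Theorem~\ref{Sq}.

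For the forward direction, I would use the non-degeneracy of $\omega$ to test the desired identity $P(x)\prec P(y)=P\bigl(P(x)\prec y+x\prec P(y)+\lambda x\prec y\bigr)$ against an arbitrary $z\in A$. Applying Eq.~(\ref{C1}) to the left side converts the pairing into the expression $\omega(P(x),P(y)\circ z+z\circ P(y))$, which lives purely in the Leibniz algebra $(A,\circ)$. On the right side, I would iteratively apply Eq.~(\ref{Fs1}) to pull $P$ across $\omega$ (each application producing a $-\lambda$ correction), apply Eq.~(\ref{C1}) termwise to convert $\prec$ into $\circ$, and then invoke the Rota-Baxter identity for $\circ$ to replace $P(P(\cdot)\circ\cdot)$ by $P(\cdot)\circ P(\cdot)-P(\cdot\circ P(\cdot))-\lambda P(\cdot\circ\cdot)$. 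An analogous scheme handles the $\succ$ identity, starting instead from $\omega(P(x)\succ P(y),z)=-\omega(P(y),P(x)\circ z)$.

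The main obstacle is bookkeeping the $\lambda$- and $\lambda^2$-weighted correction terms generated by repeated uses of Eq.~(\ref{Fs1}) and of the Rota-Baxter identity for $\circ$. The cancellation is clean in the end: the symmetric role of $y$ and $z$ in the combination $y\circ z+z\circ y$ appearing in Eq.~(\ref{C1}) pairs the $\lambda$-contributions into equal and opposite pairs, and the $\lambda^2$-contributions cancel between the two successive applications of Eq.~(\ref{Fs1}). Once this is verified, the right side collapses back to $\omega(P(x),P(y)\circ z+z\circ P(y))$, matching the left side, and a verbatim argument disposes of the $\succ$ case. Everything else is a direct invocation of Theorem~\ref{Sq}.
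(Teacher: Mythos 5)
Your proposal is correct and follows essentially the same route as the paper: invoke Theorem~\ref{Sq} for the symplectic/quadratic correspondence, note that Eqs.~(\ref{Fs}) and (\ref{Fs1}) coincide, obtain the converse by summing the $\succ$ and $\prec$ Rota--Baxter identities, and prove the forward direction by pairing the defect $P(x)\succ P(y)-P(P(x)\succ y+x\succ P(y)+\lambda x\succ y)$ (and its $\prec$ analogue) with an arbitrary $z$, converting $\succ,\prec$ to $\circ$ via Eq.~(\ref{C1}), shuttling $P$ across $\omega$ via Eq.~(\ref{Fs1}), and closing with the weight-$\lambda$ Rota--Baxter identity on $(A,\circ)$. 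The $\lambda$- and $\lambda^2$-bookkeeping you anticipate is exactly the cancellation carried out in the paper's displayed computation.
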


\begin{proof} Let $(A,\circ,P,\omega)$ be a Rota-Baxter symplectic Leibniz algebra of
weight $\lambda$. By Theorem \ref{Sq}, $(A,\succ,\prec,\omega)$ is a quadratic Leibniz-dendriform algebra.
Using Eqs.~(\ref{C1}) and (\ref{Fs1}), for all $x,y,z\in A$, we have
\begin{align*}& \omega( P(x)\succ P(y)-P(P(x)\succ y+x\succ P(y)+\lambda x\succ y),z)
\\=&-\omega( P(x)\circ z, P(y))+\omega(P(x)\succ y, P(z))+\lambda\omega(P(x)\succ y, z)
+\omega(x\succ P(y), P(z))\\&+\lambda\omega(x\succ P(y), z)
+\lambda\omega(x\succ y, P(z))+\lambda^{2}\omega(x\succ y, z)
\\=&-\omega( P(x)\circ z, P(y))-\omega( P(x)\circ  P(z), y)-\lambda \omega( P(x)\circ z, y)
-\omega( x\circ  P(z), P(y))\\&-\lambda \omega( x\circ z, P(y))
-\lambda \omega( x\circ P(z), y)
-\lambda^{2} \omega( x\circ z, y)
\\=&-\omega( P(x)\circ z+x\circ  P(z)+\lambda x\circ z, P(y))-\omega( P(x)\circ  P(z), y)
-\lambda \omega( P(x)\circ z+x\circ P(z)+\lambda x\circ z, y)
\\=&0,\end{align*}
which implies that $P(x)\succ P(y)=P(P(x)\succ y+x\succ P(y)+\lambda x\succ y)$.
Analogously, $P(x)\prec P(y)=P(P(x)\prec y+x\prec P(y)+\lambda x\prec y)$.
Thus, $(A,\succ,\prec,P,\omega)$ is a quadratic Rota-Baxter Leibniz-dendriform algebra of weight $\lambda$.
The other hand is apparently.
We complete the proof.
\end{proof}

Let $\omega$ be a non-degenerate bilinear form on a vector space $A$. Then there is an isomorphism
$\omega^{\sharp}:A\longrightarrow A^{*}$ given by
\begin{equation} \omega(x,y)=\langle\omega^{\sharp}(x),y \rangle,~~\forall~x,y\in A.\end{equation}
Define an element $r_{\omega}\in A\otimes A$ such that $T_{r_{\omega}}=(\omega^{\sharp})^{-1}$,
that is,
\begin{equation}\label{Nd1} \langle T_{r_{\omega}}(\zeta),\eta\rangle=\langle r_{\omega},\zeta \otimes\eta\rangle=
\langle (\omega^{\sharp})^{-1}(\zeta), \eta\rangle, \ \ \forall~\zeta,\eta\in A^{*}. \end{equation}

\begin{lem}\label{Fb1} Let $(A, \succ,\prec)$ be a Leibniz-dendriform algebra and $\omega$ be a non-degenerate bilinear form on $A$. Then
$(A, \succ,\prec,\omega)$ is a quadratic Leibniz-dendriform algebra if and only if the corresponding $r_{\omega}\in A\otimes A$ given by
Eq.~(\ref{Nd1}) is symmetric and invariant.\end{lem}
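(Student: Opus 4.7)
The plan is to establish the equivalence by translating each of the defining conditions through the duality map $T_{r_\omega} = (\omega^\sharp)^{-1}$ and then comparing with the quadratic conditions in Eq.~(\ref{C1}).

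First, I would handle symmetry. Since $T_r^{*} = T_{\tau(r)}$, the element $r_\omega$ is symmetric if and only if $T_{r_\omega}$ is self-adjoint. Setting $\zeta = \omega^\sharp(x)$ and $\eta = \omega^\sharp(y)$ in $\langle T_{r_\omega}(\zeta), \eta\rangle = \langle T_{r_\omega}(\eta),\zeta\rangle$ and using $T_{r_\omega} = (\omega^\sharp)^{-1}$ immediately reduces this to $\omega(x,y) = \omega(y,x)$. Thus symmetry of $r_\omega$ and symmetry of $\omega$ are equivalent.

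Next, assuming $\omega$ is symmetric, I would invoke Lemma~\ref{In2} to restate the invariance of $r_\omega$ as the two identities
\begin{align*}
R_\circ(x)T_{r_\omega}(\zeta) + T_{r_\omega}(L_\odot^{*}(x)\zeta) &= 0,\\
L_\star(x)T_{r_\omega}(\zeta) + T_{r_\omega}(R_\prec^{*}(x)\zeta) &= 0.
\end{align*}
Writing $u = T_{r_\omega}(\zeta) = (\omega^\sharp)^{-1}(\zeta)$, so that $\zeta = \omega^\sharp(u)$ and $u$ ranges freely over $A$, and then pairing each identity with an arbitrary $y\in A$ through $\omega^\sharp$, these become
\begin{align*}
\omega(u\circ x,\, y) &= \omega(u,\, x\odot y) = \omega(u,\, x\succ y + y\prec x),\\
\omega(x\circ u + u\circ x,\, y) &= \omega(u,\, y\prec x).
\end{align*}

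The core of the proof is then to show that this pair of identities is equivalent to Eq.~(\ref{C1}) under the standing assumption that $\omega$ is symmetric. Using symmetry of $\omega$, the second displayed identity transposes to $\omega(y\prec x, u) = \omega(y, x\circ u + u\circ x)$, which after renaming is exactly the first equation in Eq.~(\ref{C1}). For the first displayed identity, I would substitute the expression $x\odot y = x\succ y + y\prec x$ and, assuming the second equation of Eq.~(\ref{C1}), use the already-established $\prec$-condition to eliminate the $y\prec x$ term; what remains simplifies (via symmetry) to the $\succ$-condition $\omega(x\succ y, z) = -\omega(y, x\circ z)$. Conversely, starting from both quadratic conditions one verifies each invariance identity by direct substitution.

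The main obstacle is bookkeeping: the operators $L_\odot, R_\circ, L_\star, R_\prec$ must be carefully unpacked in terms of $\succ, \prec$, and one has to keep track of which variable plays the role of $u = T_{r_\omega}(\zeta)$ versus the acting element $x$ and the test element $y$. Once the variable renaming is handled cleanly, the two invariance conditions match the two quadratic conditions in a symmetric way, one for each of $\succ$ and $\prec$.
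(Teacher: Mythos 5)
Your proposal is correct and takes essentially the same route as the paper's proof: both reduce invariance of $r_{\omega}$ to the operator identities of Lemma~\ref{In2}, transport them through $\omega^{\sharp}=T_{r_{\omega}}^{-1}$ into identities for $\omega$, and match these against the two conditions of Eq.~(\ref{C1}) (the paper uses the dual-operator form Eqs.~(\ref{Iv5})--(\ref{Iv6}) where you use Eqs.~(\ref{Iv3})--(\ref{Iv4}), a purely cosmetic difference). Your decoupling --- the second invariance identity is exactly the $\prec$-condition, and the first, modulo the second, is the $\succ$-condition --- is accurate and, if anything, slightly cleaner than the paper's passage through Eqs.~(\ref{Sd1})--(\ref{Sd2}).
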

\begin{proof}
It is obvious that $\omega$ is symmetric if and only if $r_{\omega}$ is symmetric.
For all $x,y\in A$, put $\omega^{\sharp}(x)=\zeta,\omega^{\sharp}(y)=\eta,
\omega^{\sharp}(z)=\theta$ with $\zeta,\eta,\theta\in A^{*}$. If $\omega$ is invariant, we have
\begin{align*} \omega (x \succ y, z)+\omega(y, x\circ z)
=&\langle \theta, x\succ(\omega^{\sharp})^{-1}(\eta)\rangle
+\langle \eta, x\circ (\omega^{\sharp})^{-1}(\theta)\rangle
\\=&\langle \theta, x\succ T_{r_{\omega}}(\eta)\rangle
+\langle \eta, x\circ T_{r_{\omega}}(\theta)\rangle
\\=&\langle -R_{\succ}^{*}(T_{r_{\omega}}(\eta))\theta-R_{\circ}^{*}(T_{r_{\omega}}(\theta))\eta,  x\rangle\\=&0,\end{align*}
\begin{align*} \omega(y\prec x, z)-\omega(x\circ z+z\circ x, y)
=&\langle \omega^{\sharp}(z),(\omega^{\sharp})^{-1}(\eta)\prec x\rangle-
\langle \omega^{\sharp}(y),x\circ(\omega^{\sharp})^{-1}(\theta)+(\omega^{\sharp})^{-1}(\theta)\circ x
\rangle
\\=&\langle \theta, T_{r_{\omega}}(\eta)\prec x\rangle-
\langle \eta,x\circ T_{r_{\omega}}(\theta)+T_{r_{\omega}}(\theta)\circ x
\rangle
\\=&\langle L_{\star}^{*}(T_{r_{\omega}}(\theta))\eta-L_{\prec}^{*}(T_{r_{\omega}}(\eta))\theta,x\rangle\\=&0.
\end{align*}
Thus,
\begin{align}&\label{Sd1}R_{\succ}^{*}(T_{r_{\omega}}(\eta))\theta=R_{\circ}^{*}(T_{r_{\omega}}(\theta))\eta,
\ \ \ L_{\star}^{*}(T_{r_{\omega}}(\theta))\eta=L_{\prec}^{*}(T_{r_{\omega}}(\eta))\theta.\end{align}
It follows that
\begin{align}\label{Sd2}
R_{\odot}^{*}(T_{r_{\omega}}(\eta))\theta=L_{\circ}^{*}(T_{r_{\omega}}(\theta))\eta.\end{align}
Combining Proposition \ref{In3} and Eqs.~(\ref{Sd1})-(\ref{Sd2}), $r_{\omega}\in A\otimes A$ is invariant.
The converse part can be checked similarly.
 \end{proof}

\begin{pro} \label{QF1} Let $(A, \succ,\prec,\omega)$ be a quadratic Leibniz-dendriform algebra and $r\in A\otimes A$.
Assume that $r+\tau(r)$ is invariant. Define a linear map
\begin{equation}\label{Nd2}P:A\longrightarrow A,\ \ \ P(x)=T_{r}\omega^{\sharp}(x),~~\forall~x\in A.\end{equation}
Then $r$ is a solution of the LD-YBE in $(A, \succ,\prec)$ if and only if $P$ satisfies
\begin{align}&\label{Nd3}P(x)\succ P(y)= P(P(x)\succ y+x\succ P(y)-x\succ T_{r+\tau(r)}\omega^{\sharp}(y)),
\\&\label{Nd4}P(x)\prec P(y)= P(P(x)\prec y+x\prec P(y)-x\prec T_{r+\tau(r)}\omega^{\sharp}(y)),~~\forall~x,y\in A.
 \end{align}
\end{pro}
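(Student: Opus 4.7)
The plan is to reduce the statement to Theorem~\ref{Al} by translating the relative Rota–Baxter condition on $T_r$ into the claimed Rota–Baxter-type identity on $P$, using the quadratic structure $\omega$ as the dictionary between $A$ and $A^{*}$. Since $(A,\succ,\prec,\omega)$ is quadratic, Lemma~\ref{Fb1} tells us that $r_\omega$ is symmetric and invariant, and the invariance of $\omega$ (Eq.~\eqref{C1}) is what allows expressions like $L_\circ^{*}(x)\omega^\sharp(y)$, $R_\odot^{*}(x)\omega^\sharp(y)$, $L_\prec^{*}(x)\omega^\sharp(y)$, $L_\star^{*}(x)\omega^\sharp(y)$ to be rewritten as $\omega^\sharp$ applied to a product in $A$.

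First, I would record the following four identities, each a one-line dual pairing check using Eq.~\eqref{C1} and the symmetry of $\omega$: for every $x,y\in A$,
\begin{align*}
L_\circ^{*}(x)\omega^\sharp(y) &= \omega^\sharp(x\succ y), &
R_\odot^{*}(y)\omega^\sharp(x) &= \omega^\sharp(x\succ y),\\
-L_\prec^{*}(x)\omega^\sharp(y) &= \omega^\sharp(x\prec y), &
-L_\star^{*}(y)\omega^\sharp(x) &= \omega^\sharp(x\prec y).
\end{align*}
The first is immediate from $\omega(x\succ y,z)=-\omega(y,x\circ z)$; the second uses $x\odot y=x\succ y+y\prec x$ combined with both halves of Eq.~\eqref{C1}; the third follows by $\omega$-symmetry and $\omega(x\prec y,z)=\omega(x,y\star z)=\omega(x,z\star y)$; the fourth is analogous.

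Next, substituting $\zeta=\omega^\sharp(x)$ and $\eta=\omega^\sharp(y)$ into the identities of Theorem~\ref{Al}(b) (so that $T_r(\zeta)=P(x)$ and $T_r(\eta)=P(y)$) and applying the four translations, the first two terms on each right-hand side become $\omega^\sharp(P(x)\succ y)+\omega^\sharp(x\succ P(y))$ and $\omega^\sharp(P(x)\prec y)+\omega^\sharp(x\prec P(y))$ respectively. For the correction term I would use that $r+\tau(r)$ is symmetric, so Eq.~\eqref{AD1} gives the second form $\omega^\sharp(x)\succeq_{r+\tau(r)}\omega^\sharp(y)=R_\odot^{*}(T_{r+\tau(r)}\omega^\sharp(y))\omega^\sharp(x)$, which by the second translation equals $\omega^\sharp(x\succ T_{r+\tau(r)}\omega^\sharp(y))$; similarly Eq.~\eqref{AD2} in its second form together with the fourth translation gives $\omega^\sharp(x)\preceq_{r+\tau(r)}\omega^\sharp(y)=\omega^\sharp(x\prec T_{r+\tau(r)}\omega^\sharp(y))$. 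Applying $T_r$ to both sides and using $T_r\circ\omega^\sharp=P$ yields exactly Eqs.~\eqref{Nd3}--\eqref{Nd4}.

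The converse direction is forced by the same computation read backwards: $\omega^\sharp$ is an isomorphism, so every $\zeta\in A^{*}$ is of the form $\omega^\sharp(x)$, and the translations above are equivalences; hence Eqs.~\eqref{Nd3}--\eqref{Nd4} imply the two identities in Theorem~\ref{Al}(b), which in turn imply that $r$ solves the LD-YBE. The main obstacle is bookkeeping: one must pick the right form (left or right) of each of $\succeq_{r+\tau(r)}$ and $\preceq_{r+\tau(r)}$ so that the $T_{r+\tau(r)}\omega^\sharp(y)$ appears on the \emph{right} factor of the $\succ$ or $\prec$ in the correction term, matching Eqs.~\eqref{Nd3}--\eqref{Nd4}; here the symmetry of $r+\tau(r)$, which provides the equality of the two forms in Eqs.~\eqref{AD1}--\eqref{AD2}, is essential.
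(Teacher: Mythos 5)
Your proposal is correct and follows essentially the same route as the paper: both reduce the statement to the equivalence (a)$\Leftrightarrow$(b) of Theorem~\ref{Al} by using the quadratic form as a dictionary, your four ``translation identities'' for $\omega^{\sharp}$ being exactly the operator-form invariance relations for $r_{\omega}=(\omega^{\sharp})^{-1}$ that the paper extracts from Lemma~\ref{Fb1} and Eq.~\eqref{Iv9}. The handling of the correction terms via the second (symmetric) forms of Eqs.~\eqref{AD1}--\eqref{AD2} also matches the paper's computation.
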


\begin{proof} By Lemma \ref{Fb1}, $r_{\omega}$ is symmetric and invariant. Using
 Eqs.~(\ref{Iv9}), (\ref{AD1}) and (\ref{Nd1}),
 for all $x,y\in A$, put $\omega^{\sharp}(x)=\zeta,\omega^{\sharp}(y)=\eta$ with
$\zeta,\eta\in A^{*}$, we get
\begin{align*}& P(x)\succ P(y)=T_{r}\omega^{\sharp}(x)\succ T_{r}\omega^{\sharp}(y)=T_{r}(\zeta)\succ T_{r}(\eta),\\
&P(P(x)\succ y)=T_{r}\omega^{\sharp}(T_{r}(\zeta)\succ T_{r_{\omega}}(\eta))=
T_{r}\omega^{\sharp} T_{r_{\omega}}(L_{\circ}^{*}(T_{r}(\zeta))\eta)
=T_{r}(L_{\circ}^{*}(T_{r}(\zeta))\eta),
\\
&P(x\succ P(y))=T_{r}\omega^{\sharp}(T_{r_{\omega}}(\zeta)\succ T_{r}(\eta))=
T_{r}\omega^{\sharp}T_{r_{\omega}}(R_{\odot}^{*}(T_{r}(\eta))\zeta)
=T_{r}(R_{\odot}^{*}(T_{r}(\eta))\zeta),
\\
&P(x\succ T_{r+\tau(r)}\omega^{\sharp}(y))
=T_{r}\omega^{\sharp}(T_{r_{\omega}}(\zeta)\succ T_{r+\tau(r)}(\eta))
=T_{r}\omega^{\sharp}  T_{r_{\omega}}( R_{\odot}^{*}(T_{r+\tau(r)}(\eta))\zeta)
\\=&-T_{r}(( R_{\odot}^{*}(T_{r+\tau(r)}(\eta))\zeta)
=T_{r}(\zeta\succeq_{r+\tau(r)}\eta).\end{align*}
 Thus, Eq.~(\ref{Nd3}) holds if and only if Eq.~(\ref{AD6}) holds.
 Analogously, Eq.~(\ref{Nd4}) holds if and only if Eq.~(\ref{AD5}) holds.
 Combining Theorem \ref{Al}, we get the statement.
 \end{proof}

\begin{lem} \label{QF2} Let $A$ be a vector space and $\omega$ be a non-degenerate symmetric bilinear form.
Let $r\in A\otimes A$,
$\lambda\in k$ and $P$ be given by Eq.~(\ref{Nd2}). Then $r$ satisfies
\begin{equation} \label{Nd5} r+\tau(r)=-\lambda r_{\omega} \end{equation}
if and only if $P$ satisfies Eq.~(\ref{Fs}).
\end{lem}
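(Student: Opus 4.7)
The plan is to translate the identity \eqref{Fs} entirely into the dual language using $\omega^\sharp$, and then recognize both sides as the pairing of a single tensor in $A\otimes A$ against arbitrary elements of $A^*\otimes A^*$. Since $\omega^\sharp:A\to A^*$ is an isomorphism (by non-degeneracy), this pairing will vanish on all pairs iff the tensor itself vanishes, giving the equivalence.

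Concretely, for arbitrary $x,y\in A$ set $\zeta=\omega^\sharp(x)$ and $\eta=\omega^\sharp(y)$, so that $x=T_{r_\omega}(\zeta)$ and $y=T_{r_\omega}(\eta)$. Using symmetry of $\omega$ together with $P(x)=T_r\omega^\sharp(x)=T_r(\zeta)$, I would first compute
\[
\omega(P(x),y)=\omega(y,P(x))=\langle \omega^\sharp(y),T_r(\zeta)\rangle=\langle \eta,T_r(\zeta)\rangle=\langle r,\zeta\otimes\eta\rangle,
\]
\[
\omega(x,P(y))=\langle \omega^\sharp(x),T_r(\eta)\rangle=\langle \zeta,T_r(\eta)\rangle=\langle \tau(r),\zeta\otimes\eta\rangle.
\]
Next, since $\omega$ is symmetric, $r_\omega$ is symmetric (as noted in the proof of Lemma~\ref{Fb1}), so
\[
\omega(x,y)=\langle\omega^\sharp(x),y\rangle=\langle \zeta,T_{r_\omega}(\eta)\rangle=\langle r_\omega,\eta\otimes\zeta\rangle=\langle r_\omega,\zeta\otimes\eta\rangle.
\]

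Summing these three identities yields
\[
\omega(P(x),y)+\omega(x,P(y))+\lambda\omega(x,y)=\langle r+\tau(r)+\lambda r_\omega,\zeta\otimes\eta\rangle.
\]
Because $\omega^\sharp$ is bijective, $(\zeta,\eta)$ ranges over all of $A^*\times A^*$ as $(x,y)$ ranges over $A\times A$. Hence Eq.~\eqref{Fs} holds for all $x,y\in A$ if and only if $\langle r+\tau(r)+\lambda r_\omega,\zeta\otimes\eta\rangle=0$ for all $\zeta,\eta\in A^*$, which is equivalent to $r+\tau(r)=-\lambda r_\omega$, i.e.\ Eq.~\eqref{Nd5}. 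This is the desired equivalence.

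There is no real obstacle here: the proof is a bookkeeping exercise in dualization, and the only mild point to check is that $r_\omega$ is symmetric under our hypotheses, which follows immediately from the symmetry of $\omega$ via the defining relation $\langle T_{r_\omega}(\zeta),\eta\rangle=\langle (\omega^\sharp)^{-1}(\zeta),\eta\rangle=\omega((\omega^\sharp)^{-1}(\zeta),(\omega^\sharp)^{-1}(\eta))$.
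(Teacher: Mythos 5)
Your proof is correct and follows essentially the same route as the paper: both compute $\omega(P(x),y)$, $\omega(x,P(y))$ and $\lambda\omega(x,y)$ as pairings of $r$, $\tau(r)$ and $r_\omega$ against $\zeta\otimes\eta$ with $\zeta=\omega^\sharp(x)$, $\eta=\omega^\sharp(y)$, and conclude by non-degeneracy. The only cosmetic difference is that you invoke the symmetry of $r_\omega$ to rewrite $\langle r_\omega,\eta\otimes\zeta\rangle$ as $\langle r_\omega,\zeta\otimes\eta\rangle$, whereas the paper uses the symmetry of $\omega$ directly; both are immediate.
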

\begin{proof} For all $x,y\in A$, put $\omega^{\sharp}(x)=\zeta,\omega^{\sharp}(y)=\eta,~~\zeta,\eta\in A^{*}$.
\begin{align*}& \omega(P(x),y)=\omega(y,P(x))=\langle\omega^{\sharp}(y),T_{r}\omega^{\sharp}(x)\rangle
=\langle \eta,T_{r}(\zeta)\rangle
=\langle r,\zeta\otimes\eta\rangle,\\
&\omega(x,P(y))=\langle \omega^{\sharp}(x),T_{r}\omega^{\sharp}(y)\rangle
=\langle \zeta,T_{r}(\eta)\rangle
=\langle r,\eta\otimes\zeta\rangle
=\langle \tau(r),\zeta\otimes\eta\rangle,
\\
&\lambda\omega(x,y)=\lambda \omega(y,x)
=\lambda\langle \omega^{\sharp}(y),(\omega^{\sharp})^{-1}\omega^{\sharp}(x)\rangle
=\lambda\langle r_{\omega},\zeta\otimes\eta\rangle
.\end{align*}
Thus, Eq.~(\ref{Nd5}) holds if and only if Eq.~(\ref{Fs}) holds.
 \end{proof}

\begin{cor} \label{Fb3} Let $(A, \succ,\prec,P,\omega)$ be a quadratic Rota-Baxter Leibniz-dendriform algebra of weight 0.
 Then there is a triangular
Leibniz-dendriform bialgebra $(A, \succ,\prec, \Delta_{\succ,r},\Delta_{\prec,r})$ with $\Delta_{\succ,r},\Delta_{\prec,r}$ given
 by Eqs.~(\ref{CB1})-(\ref{CB2}), where $r\in A\otimes A$ given by
$T_{r}(\zeta)=P(\omega^{\sharp})^{-1}(\zeta)$ for all $\zeta\in A^{*}$.
\end{cor}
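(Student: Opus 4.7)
The plan is to assemble the corollary from the three preceding technical results: Lemma \ref{Fb1}, Lemma \ref{QF2}, and Proposition \ref{QF1}, followed by Theorem \ref{BY}.

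First, I would define $r \in A \otimes A$ by $T_{r}(\zeta) = P(\omega^{\sharp})^{-1}(\zeta)$, i.e.\ $T_{r} = P \circ T_{r_{\omega}}$ via Eq.~\eqref{Nd1}. Since $(A,\succ,\prec,P,\omega)$ is a quadratic Rota-Baxter Leibniz-dendriform algebra of weight $0$, the compatibility condition Eq.~\eqref{Fs} holds with $\lambda = 0$, namely $\omega(P(x),y) + \omega(x,P(y)) = 0$. Applying Lemma \ref{QF2} with $\lambda = 0$ then yields $r + \tau(r) = 0$, so $r$ is skew-symmetric. In particular $r + \tau(r)$ is trivially invariant, which is the hypothesis needed to invoke Proposition \ref{QF1}.

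Next, I would verify that $r$ is a solution of the LD-YBE. By Lemma \ref{Fb1}, the quadratic structure ensures $r_{\omega}$ is symmetric and invariant, so Proposition \ref{QF1} applies and tells us that $r$ is a solution of the LD-YBE if and only if Eqs.~\eqref{Nd3}--\eqref{Nd4} hold. Because $r + \tau(r) = 0$, the correction terms $x \succ T_{r+\tau(r)}\omega^{\sharp}(y)$ and $x \prec T_{r+\tau(r)}\omega^{\sharp}(y)$ both vanish, and Eqs.~\eqref{Nd3}--\eqref{Nd4} reduce exactly to the weight-zero Rota-Baxter identities
\[
P(x) \succ P(y) = P(P(x) \succ y + x \succ P(y)), \quad P(x) \prec P(y) = P(P(x) \prec y + x \prec P(y)),
\]
which hold by hypothesis on $P$. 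Hence $r$ solves the LD-YBE.

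Finally, with $r$ a skew-symmetric solution of the LD-YBE on $(A,\succ,\prec)$, Theorem \ref{BY} produces a Leibniz-dendriform bialgebra $(A,\succ,\prec,\Delta_{\succ,r},\Delta_{\prec,r})$ via Eqs.~\eqref{CB1}--\eqref{CB2}; skew-symmetry of $r$ then promotes this to a triangular Leibniz-dendriform bialgebra in the sense of Definition \ref{Qt1}, completing the argument.

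The only non-routine step is the first one: identifying that the hypothesis $\omega(P(x),y) + \omega(x,P(y)) = 0$ on a quadratic Rota-Baxter algebra is precisely equivalent, via the dictionary $r \leftrightarrow P$ of Eq.~\eqref{Nd2} and the identification $r_{\omega} \leftrightarrow (\omega^{\sharp})^{-1}$, to the skew-symmetry of the associated $r$. Once this translation is secured by Lemma \ref{QF2}, everything else is a direct appeal to results already proved in the paper.
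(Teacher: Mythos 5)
Your proposal is correct and follows essentially the same route as the paper: apply Lemma \ref{QF2} with $\lambda=0$ to deduce $r+\tau(r)=0$, then use Proposition \ref{QF1} (whose correction terms vanish) to reduce the LD-YBE to the weight-zero Rota-Baxter identities, and conclude via Theorem \ref{BY}. The paper's own proof is just a terser statement of exactly this argument.
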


 \begin{proof} Since $r + \tau(r) = -\lambda r_{\omega} = 0$, it follows that $r$ is skew-symmetric.
In view of Proposition \ref{QF1} and Lemma \ref{QF2}, the desired conclusion follows.
 \end{proof}

\begin{thm} \label{Fb3} Let $(A, \succ,\prec, \Delta_{\succ,r},\Delta_{\prec,r})$ be a factorizable
 Leibniz-dendriform bialgebra
with $r\in A\otimes A$. Then $(A, \succ,\prec,P,\omega)$
 is a quadratic Rota-Baxter Leibniz-dendriform algebra of weight $\lambda$ with $P$ given by Eq. (\ref{Nd2}), and $\omega$ is given by
\begin{equation} \label{Nd6} \omega(x,y)=-\lambda\langle T_{r+\tau(r)}^{-1}(x),y \rangle,~~\forall~x,y\in A.\end{equation}
Conversely, let $(A, \succ,\prec,P,\omega)$ be a quadratic Rota-Baxter Leibniz-dendriform algebra of weight $\lambda~(\lambda\neq 0)$.
Then there is a factorizable Leibniz-dendriform bialgebra $(A, \succ,\prec,\Delta_{\succ,r},\Delta_{\prec,r})$
 with $\Delta_{\succ,r},\Delta_{\prec,r}$ defined by Eqs.~(\ref{CB1})-(\ref{CB2}), where $r\in A\otimes A$ is
given through the operator form $T_r=P(\omega^{\sharp})^{-1}$.
\end{thm}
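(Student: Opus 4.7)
The plan is to prove both directions in parallel by assembling the three earlier technical results --- Lemma~\ref{Fb1} (quadraticity of $\omega$ $\Longleftrightarrow$ $r_{\omega}$ symmetric and invariant), Lemma~\ref{QF2} ($r+\tau(r)=-\lambda r_{\omega}$ $\Longleftrightarrow$ \eqref{Fs}), and Proposition~\ref{QF1} (LD-YBE for $r$ $\Longleftrightarrow$ the modified equations \eqref{Nd3}--\eqref{Nd4} for $P=T_{r}\omega^{\sharp}$) --- into a bijective correspondence between the defining data on each side. The key bookkeeping observation that glues everything together is
\[
T_{r+\tau(r)}\,\omega^{\sharp}=-\lambda I,
\]
which is immediate from $\omega^{\sharp}(x)=-\lambda T_{r+\tau(r)}^{-1}(x)$ in \eqref{Nd6} (respectively from $r_{\omega}=-\tfrac{1}{\lambda}(r+\tau(r))$ in the converse). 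This identity converts the auxiliary term $-\,x\succ T_{r+\tau(r)}\omega^{\sharp}(y)$ appearing in Proposition~\ref{QF1} into precisely the weight term $+\lambda\,x\succ y$ of a Rota-Baxter operator of weight~$\lambda$, and similarly for $\prec$.

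For the forward direction, fix a nonzero scalar $\lambda$ and define $\omega$ by \eqref{Nd6}. Since $T_{r+\tau(r)}$ is a linear isomorphism (factorizability) and $r+\tau(r)$ is symmetric, $\omega$ is non-degenerate and symmetric, and reading off the definitions gives $T_{r_{\omega}}=(\omega^{\sharp})^{-1}=-\tfrac{1}{\lambda}T_{r+\tau(r)}$, so $r_{\omega}=-\tfrac{1}{\lambda}(r+\tau(r))$. The invariance of $r+\tau(r)$ built into the quasi-triangular structure transfers by linearity to $r_{\omega}$, and Lemma~\ref{Fb1} then certifies that $(A,\succ,\prec,\omega)$ is a quadratic Leibniz-dendriform algebra. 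With $P=T_{r}\omega^{\sharp}$, Proposition~\ref{QF1} applied to the solution $r$ of the LD-YBE yields \eqref{Nd3}--\eqref{Nd4}, which via $T_{r+\tau(r)}\omega^{\sharp}=-\lambda I$ are exactly the Rota-Baxter identities of weight $\lambda$ for $P$ on $(A,\succ,\prec)$. The compatibility \eqref{Fs} follows from Lemma~\ref{QF2} applied to $r+\tau(r)=-\lambda r_{\omega}$.

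For the converse, let $(A,\succ,\prec,P,\omega)$ be a quadratic Rota-Baxter Leibniz-dendriform algebra of weight $\lambda\neq 0$ and set $T_{r}:=P(\omega^{\sharp})^{-1}$. Lemma~\ref{QF2} applied to \eqref{Fs} gives $r+\tau(r)=-\lambda r_{\omega}$, and Lemma~\ref{Fb1} together with the quadraticity of $\omega$ shows $r_{\omega}$ is symmetric and invariant; hence so is $r+\tau(r)$, providing the invariance needed for a quasi-triangular bialgebra. The Rota-Baxter identities of weight $\lambda$ for $P$ translate, through the same identity $T_{r+\tau(r)}\omega^{\sharp}=-\lambda I$, to Eqs.~\eqref{Nd3}--\eqref{Nd4}, so Proposition~\ref{QF1} supplies the LD-YBE for $r$. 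Finally, $T_{r+\tau(r)}=-\lambda(\omega^{\sharp})^{-1}$ is a linear isomorphism because $\omega$ is non-degenerate and $\lambda\neq 0$, which gives the factorizability.

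The main (and really only) obstacle is the careful tracking of the dual identifications, particularly verifying that $T_{r+\tau(r)}\omega^{\sharp}=-\lambda I$ is coherent with the definitions $\omega^{\sharp}(x)=-\lambda T_{r+\tau(r)}^{-1}(x)$ and $T_{r_{\omega}}=(\omega^{\sharp})^{-1}$; once this is in hand the argument is essentially a diagram chase through Proposition~\ref{QF1} and Lemmas~\ref{QF2},\ref{Fb1}. The hypothesis $\lambda\neq 0$ in the converse is indispensable: it is precisely what makes \eqref{Fs} strong enough to force invertibility of $T_{r+\tau(r)}$ and thus yields factorizability rather than merely triangularity (the $\lambda=0$ case being covered by Corollary~\ref{Fb3}).
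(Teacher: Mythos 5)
Your proposal is correct and follows essentially the same route as the paper: both directions are obtained by combining Lemma~\ref{Fb1}, Lemma~\ref{QF2} and Proposition~\ref{QF1}, with the identity $T_{r+\tau(r)}\omega^{\sharp}=-\lambda I$ (equivalently $\omega^{\sharp}=-\lambda T_{r+\tau(r)}^{-1}$) converting the auxiliary term in Eqs.~\eqref{Nd3}--\eqref{Nd4} into the weight-$\lambda$ term. Your write-up actually makes explicit several steps (symmetry and non-degeneracy of $\omega$, the transfer of invariance to $r_{\omega}$) that the paper's proof leaves implicit.
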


\begin{proof} On the one hand, since $(A, \succ,\prec, \Delta_{\succ,r},\Delta_{\prec,r})$ is a factorizable
Leibniz-dendriform bialgebra, $r+\tau(r)$ is invariant and $T_{r+\tau(r)}$ is a linear isomorphism.
In view of Proposition \ref{QF1} and Lemma \ref{QF2}, we know that $(A, \succ,\prec,P,\omega)$
 is a quadratic Rota-Baxter Leibniz-dendriform algebra of weight $\lambda$, where $\omega^{\sharp}=-\lambda T_{r+\tau(r)}^{-1}$.
 Conversely, suppose that $(A, \succ,\prec,P,\omega)$ is a quadratic Rota-Baxter Leibniz-dendriform
 algebra of weight $\lambda~(\lambda\neq 0)$.
 By Lemma \ref{Fb1}, Lemma \ref{QF2} and Proposition \ref{QF1}, we know that
 $r+\tau(r)$ is invariant, $T_{r+\tau(r)}=-\lambda (\omega^{\sharp})^{-1}$ is a linear isomorphism
 and $r$ is a solution of the LD-YBE in $(A, \succ,\prec)$. Therefore,
  $(A, \succ,\prec,\Delta_{\succ,r},\Delta_{\prec,r})$ is a factorizable Leibniz-dendriform bialgebra.

\end{proof}

\begin{pro} Let $(A, \succ,\prec,P)$ be a Rota-Baxter Leibniz-dendriform algebra of weight $\lambda$. Then
$(A\ltimes A^{*},\omega,P-(P^{*}+\lambda I))$
is a quadratic Rota-Baxter Leibniz-dendriform algebra of weight $\lambda$, where the bilinear form $\omega$ on $A\oplus A^{*}$
is given by
\begin{equation*} \omega(x+\zeta,y+\eta)=\langle x,\eta\rangle+\langle y,\zeta\rangle,~~\forall~x,y\in A,~\zeta,\eta\in A^{*}.\end{equation*}
Then $(A\ltimes A^{*}, \Delta_{\succ,r},\Delta_{\prec,r})$
is a factorizable Leibniz-dendriform bialgebra
with $\Delta_{\succ,r},\Delta_{\prec,r}$ defined by Eqs.~(\ref{CB1})-(\ref{CB2}) with $r$ given by $T_r=P(\omega^{\sharp})^{-1}$.
  Explicitly, assume that $\{e_1,\cdot\cdot\cdot, e_n\}$
is a basis of $A$ and $\{e_{1}^{*},\cdot\cdot\cdot, e^{*}_n\}$
is the dual basis, where $r=\sum_{i}e_{i}^{*}\otimes P(e_{i})-(P+\lambda I)(e_{i})\otimes e_{i}^{*}$

\end{pro}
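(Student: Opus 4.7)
The approach is to reduce the proposition to the converse direction of Theorem~\ref{Fb3}: it suffices to prove that $(A \ltimes A^{*}, \omega, Q)$ with $Q := P - (P^{*} + \lambda I)$ is a quadratic Rota-Baxter Leibniz-dendriform algebra of weight $\lambda$, for then Theorem~\ref{Fb3} (applied with $\lambda \neq 0$) produces the factorizable Leibniz-dendriform bialgebra together with $T_r = Q(\omega^{\sharp})^{-1}$. Accordingly, the plan is to verify three things in turn: that $(A \ltimes A^{*}, \omega)$ is a quadratic Leibniz-dendriform algebra, that $Q$ satisfies the Rota-Baxter axioms of weight $\lambda$, and that the resulting $r$ has the stated explicit form in the basis $\{e_i\} \cup \{e_i^{*}\}$.

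For the quadratic structure, the Leibniz-dendriform algebra $A \ltimes A^{*}$ is the semi-direct product built from the dual representation $(A^{*}, L_{\circ}^{*}, R_{\odot}^{*}, -L_{\prec}^{*}, -L_{\star}^{*})$ of Proposition~\ref{Dr}(c), whose construction is summarised in Remark~\ref{La}(2). The pairing $\omega$ is manifestly symmetric and non-degenerate, and a case-by-case expansion in which the three arguments lie in $A$ or in $A^{*}$ reduces each of the two invariance identities of Eq.~(\ref{C1}) to the defining relation $\langle f^{*}(x)u^{*}, v\rangle = -\langle u^{*}, f(x)v\rangle$ of the dual representation.

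To verify the Rota-Baxter axioms for $Q$, write $u = x + \zeta$ and $v = y + \eta$, so that $Q$ preserves the $A \oplus A^{*}$ decomposition and $A^{*}$ is an abelian ideal in $A \ltimes A^{*}$. The identity $Q(u) \succ Q(v) = Q\bigl(Q(u) \succ v + u \succ Q(v) + \lambda\, u \succ v\bigr)$ then splits into four cases. The case $u, v \in A$ is exactly the Rota-Baxter hypothesis for $P$ on $(A, \succ, \prec)$, and the case $u, v \in A^{*}$ is trivial. In the two mixed cases, the $A$-components of both sides vanish, and equating the $A^{*}$-components and pairing against an arbitrary $z \in A$ reduces the identity to the Rota-Baxter equation for $P$ relative to $\circ = \succ + \prec$ and to $\odot = L_{\succ} + R_{\prec}$ respectively; the latter is obtained by summing the Rota-Baxter axioms for $\succ$ and $\prec$ under $x \odot y = x \succ y + y \prec x$. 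The same analysis verifies the $\prec$-axiom, and the compatibility $\omega(Qu, v) + \omega(u, Qv) + \lambda\,\omega(u, v) = 0$ of Eq.~(\ref{Fs}) is a short direct expansion using $\langle P(y), \zeta\rangle = \langle y, P^{*}(\zeta)\rangle$.

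With the quadratic Rota-Baxter data in place, Theorem~\ref{Fb3} yields the factorizable bialgebra, and the corresponding $r$ is determined by $T_r = Q(\omega^{\sharp})^{-1}$. Since $(\omega^{\sharp})^{-1}$ swaps the $A$- and $A^{*}$-summands, one finds $T_r(\alpha + a) = P(a) - (P^{*} + \lambda I)(\alpha)$, and expanding this in the basis $\{e_i\} \cup \{e_i^{*}\}$ of $A \oplus A^{*}$ (whose dual basis in $(A \oplus A^{*})^{*}$ is $\{e_i^{*}\} \cup \{e_i\}$) recovers the announced formula $r = \sum_i e_i^{*} \otimes P(e_i) - \sum_i (P + \lambda I)(e_i) \otimes e_i^{*}$. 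The principal obstacle is the mixed-case Rota-Baxter verification in the preceding step: balancing the $\lambda$-weight terms is what forces the asymmetric correction $-(P^{*} + \lambda I)$ in place of the naive $-P^{*}$, and the computation demands careful tracking of signs coming from the dual representation convention.
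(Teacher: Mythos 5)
Your proposal is correct and follows essentially the same route as the paper: the paper likewise reduces the statement to Theorem~\ref{Fb3} after asserting "by direct computations" that $(A\ltimes A^{*},\omega,P-(P^{*}+\lambda I))$ is a quadratic Rota-Baxter Leibniz-dendriform algebra of weight $\lambda$, and then derives the explicit form of $r$ from $T_{r}(x+\zeta)=P(x)-(P^{*}+\lambda I)(\zeta)$ by pairing against the basis, exactly as you do. Your case-by-case verification of the Rota-Baxter axioms for $Q$ on the semidirect product (reducing the mixed cases to the Rota-Baxter identities for $\circ$ and $\odot$) simply supplies the details the paper omits.
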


\begin{proof} By direct computations, $(A\ltimes A^{*},\omega,P-(P^{*}+\lambda I))$
is a quadratic Rota-Baxter Leibniz-dendriform algebra of weight $\lambda$.
For all $x\in A$ and $\zeta\in A^{*}$, $\omega^{\sharp}(x+\zeta)=x+\zeta$. By Corollary \ref{Fb3}, we have a linear map
$T_{r}:A\oplus A^{*}\longrightarrow A\oplus A^{*}$ defined by
\begin{equation*}
T_{r}(x+\zeta)=(P-(P^{*}+\lambda I))(\omega^{\sharp})^{-1}(x+\zeta)=P(x)-(P^{*}+\lambda I)(\zeta).\end{equation*}
Thus,
\begin{align*}&
\sum_{i,j}\langle r,e_{i}\otimes e_{j}^{*}\rangle=\sum_{i,j}\langle T_{r}(e_{i}), e_{j}^{*}\rangle=\sum_{i,j}\langle P(e_{i}), e_{j}^{*}\rangle
\\&\sum_{i,j}\langle r,e_{i}^{*}\otimes e_{j}\rangle=\sum_{i,j}\langle T_{r}(e_{i}^{*}), e_{j}\rangle=-\sum_{i,j}\langle (P^{*}+\lambda I)(e_{i}^{*}), e_{j}\rangle=-\sum_{i,j}\langle e_{i}^{*}, (P+\lambda I)(e_{j})\rangle.\end{align*}
It follows that $r=\sum_{i}e_{i}^{*}\otimes P(e_{i})-(P+\lambda I)(e_{i})\otimes e_{i}^{*}$.
\end{proof}

 By Theorem \ref{Fb3}, there exists a one-to-one correspondence between quadratic
  Rota-Baxter Leibniz-dendriform algebras of nonzero weight $\lambda$ and
   factorizable Leibniz-dendriform bialgebras. Combining Theorem \ref{Fb0},
   Rota-Baxter symplectic Leibniz algebras of nonzero weight also correspond to such bialgebras.

Based on Proposition~\ref{Fb2} and Theorem~\ref{Fb3}, we observe that a quadratic Rota-Baxter Leibniz-dendriform
 algebra $(A, \ast, \circ, P, \omega)$ of nonzero weight $\lambda$ corresponds to a
 factorizable Leibniz-dendriform bialgebra.
It follows that $(A, \ast, \circ, -\lambda I-P, \omega)$ also corresponds to such a bialgebra.
Moreover, by Proposition~\ref{Qf} and Theorem~\ref{Fb3}, if $(A, \ast, \circ, \Delta_r, \delta_r)$
is a factorizable Leibniz-dendriform bialgebra, then $(A, \ast, \circ, \Delta_{\tau(r)}, \delta_{\tau(r)})$
also gives rise to a quadratic Rota-Baxter Leibniz-dendriform algebra of weight $\lambda$.

\begin{pro}
Let $(A, \succ,\prec, \Delta_{\succ,r},\Delta_{\prec,r})$ be a factorizable
Leibniz-dendriform bialgebra which corresponds to a quadratic Rota-Baxter Leibniz-dendriform algebras of non-zero weight $\lambda$.
Then the factorizable
Leibniz-dendriform bialgebra $(A, \succ,\prec, \Delta_{\succ,\tau(r)},\Delta_{\prec,\tau(r)})$ corresponds to the
quadratic Rota-Baxter Leibniz-dendriform algebra $(A,\succ,\prec,-\lambda I-P,\omega)$ of non-zero weight $\lambda$.
\end{pro}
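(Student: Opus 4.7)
The plan is straightforward and proceeds by tracking the correspondence of Theorem~\ref{Fb3} through the involution $r \mapsto \tau(r)$. First, Proposition~\ref{Qf} guarantees that $(A,\succ,\prec,\Delta_{\succ,\tau(r)},\Delta_{\prec,\tau(r)})$ is itself a factorizable Leibniz-dendriform bialgebra, so Theorem~\ref{Fb3} attaches to it a quadratic Rota-Baxter Leibniz-dendriform algebra $(A,\succ,\prec,P',\omega')$ of some non-zero weight $\lambda'$; my job is to identify $\omega'$, $\lambda'$, and $P'$ and to match them with the data $(A,\succ,\prec,-\lambda I - P, \omega)$ of weight $\lambda$.

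The key observation is that both $\omega$ and the weight $\lambda$ in the correspondence depend only on the symmetric part of $r$. Indeed, Eq.~\eqref{Nd6} defines $\omega$ through $T_{r+\tau(r)}^{-1}$, and since $\tau(r) + \tau(\tau(r)) = r + \tau(r)$, we have $T_{\tau(r)+r} = T_{r+\tau(r)}$, so $\omega' = \omega$ and $\lambda' = \lambda$. It then remains to compute $P' = T_{\tau(r)} \omega^{\sharp}$. By Lemma~\ref{QF2}, the identity $r+\tau(r) = -\lambda r_{\omega}$ holds, which translates at the level of operators into $T_r + T_{\tau(r)} = -\lambda (\omega^{\sharp})^{-1}$. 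Postcomposing with $\omega^{\sharp}$ yields $P + P' = -\lambda I$, hence $P' = -\lambda I - P$, exactly as claimed.

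To close the loop, I invoke Proposition~\ref{Fb2} to confirm that $(A,\succ,\prec,-\lambda I - P, \omega)$ is indeed a quadratic Rota-Baxter Leibniz-dendriform algebra of weight $\lambda$, so Theorem~\ref{Fb3} applied in the reverse direction recovers the factorizable Leibniz-dendriform bialgebra built from $\tau(r)$. The argument is essentially bookkeeping; the only point worth care is verifying that the two factorizable bialgebras share the same $\omega^{\sharp}$, so that the operator identity $T_r + T_{\tau(r)} = -\lambda (\omega^{\sharp})^{-1}$ can be legitimately converted into the Rota-Baxter-operator identity $P' = -\lambda I - P$. This is the content of the first observation above, and once it is in place there is no further obstacle.
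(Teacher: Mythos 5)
Your proposal is correct and follows essentially the same route as the paper: both arguments use Proposition~\ref{Qf} together with Theorem~\ref{Fb3} to attach a quadratic Rota-Baxter structure $(P',\omega')$ to the $\tau(r)$-bialgebra, observe that $\omega'=\omega$ and the weight is unchanged because Eq.~\eqref{Nd6} depends only on $r+\tau(r)$, and then derive $P'=-\lambda I-P$ from the operator identity $T_r+T_{\tau(r)}=-\lambda(\omega^{\sharp})^{-1}$ (the paper writes this as $T_{\tau(r)}=T_{r+\tau(r)}-T_r$, which is the same computation). No gaps.
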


\begin{proof}
In view of Proposition \ref{Qf} and Theorem \ref{Fb1}, the factorizable
Leibniz-dendriform bialgebra $(A, \succ,\prec, \Delta_{\succ,\tau(r)},\Delta_{\prec,\tau(r)})$ corresponds to a
quadratic Rota-Baxter Leibniz-dendriform algebra $(A,$ \ \ \ $\succ,\prec,P',\omega')$ of non-zero weight $\lambda$.
By Theorem \ref{Fb3},
\begin{equation*}
\omega'(x,y)=-\lambda\langle T_{r+\tau(r)}^{-1}(x),y \rangle=\omega(x,y).\end{equation*}
Using Eqs.~(\ref{Nd2}) and (\ref{Nd6}), we obtain
\begin{align*}
P'(x)= T_{\tau(r)}{\omega^{'}}^{\sharp}(x)=T_{\tau(r)}\omega^{\sharp}(x)
&=-\lambda T_{\tau(r)}T_{r+\tau(r)}^{-1}(x)=\lambda (T_{r}-T_{r+\tau(r))})T_{r+\tau(r)}^{-1}(x)
\\&=\lambda T_{r}T_{r+\tau(r)}^{-1}(x)-\lambda (x)
=-T_{r}\omega^{\sharp}(x)-\lambda (x)=-P(x)-\lambda (x)
.\end{align*}
Therefore, the factorizable Leibniz-dendriform bialgebra
$(A, \succ, \prec, \Delta_{\succ,\tau(r)}, \Delta_{\prec,\tau(r)})$ yields a
quadratic Rota-Baxter Leibniz-dendriform algebra
$(A, \succ, \prec, -\lambda I-P, \omega)$ of nonzero weight $\lambda$.
A similar reasoning shows that the converse also holds.

\end{proof}

\begin{center}{\textbf{Acknowledgments}}
\end{center}
The first author is supported by the Natural Science
Foundation of Zhejiang Province of China (No. LY19A010001) and the Science
and Technology Planning Project of Zhejiang Province
(No. 2022C01118).  The second author is supported by the NSF of China (No. 12161013).

\begin{center} {\textbf{Statements and Declarations}}
\end{center}
 All datasets underlying the conclusions of the paper are available
to readers. No conflict of interest exits in the submission of this
manuscript.


\end {document}